\documentclass[10pt]{article}
\usepackage[latin1]{inputenc}
\usepackage{a4wide}
\usepackage{amsmath}
\usepackage{amssymb}
\usepackage{amsfonts}
\usepackage{paralist} 
\usepackage{amsthm}
\usepackage{url}
\usepackage{graphicx}
\usepackage{nicefrac}
\usepackage{color}
\usepackage{enumitem}
\usepackage[pdftex]{hyperref}
\usepackage{bbm}
\usepackage{tikz}
\usepackage{times} 
\usepackage[size=small, font=it, labelfont=bf]{caption}
\usepackage[normalem]{ulem} 
\usepackage[numbers,sort&compress]{natbib}
\usepackage{vmargin}

\newcommand{\Prob}[1]{\ensuremath{\mathbb{P} \left(#1 \right)}}
\newcommand{\E}[1]{\ensuremath{\mathbb{E} \left[#1 \right]}}
\newcommand{\Ec}[1]{\ensuremath{\mathbb{E} [#1]}}
\newcommand{\cO}{\ensuremath{\mathcal F}} 

\newcommand{\SW}{\ensuremath{\mathrm{SW}}}
\newcommand{\NW}{\ensuremath{\mathrm{NW}}}
\newcommand{\SE}{\ensuremath{\mathrm{SE}}}
\newcommand{\NE}{\ensuremath{\mathrm{NE}}}

\newtheorem{thm}{Theorem}[section]
\newtheorem{lem}{Lemma}[section]
\newtheorem{cor}[thm]{Corollary}
\newtheorem{rem}{Remark}[section]
\newtheorem{proposition}[thm]{Proposition}

\newcommand{\Ck}{\ensuremath{\mathcal{C}_k}}
\newcommand{\Cfour}{\ensuremath{\mathcal{C}_4}}
\newcommand{\Ctwo}{\ensuremath{\mathcal{C}_2}}
\newcommand{\Cone}{\ensuremath{\mathcal{C}_1}}
\newcommand{\vol}{\ensuremath{\mathrm{Vol}}}

\hypersetup{
    unicode      = false,     % non-Latin characters in Acrobat’s bookmarks
    pdftoolbar   = true,      % show Acrobat’s toolbar?
    pdfmenubar   = true,      % show Acrobat’s menu?
    pdffitwindow = true,      % page fit to window when opened
    pdfnewwindow = true,      % links in new window
    colorlinks   = true,      % false: boxed links; true: colored links
    linkcolor    = blue,      % color of internal links
    citecolor    = red,      % color of links to bibliography
    filecolor    = blue,      % color of file links
    urlcolor     = blue       % color of external links
}

\begin{document}

\title{\bf A limit field for orthogonal range searches in two-dimensional random point search trees}
\author{
Nicolas Broutin 
\thanks{Sorbonne Universit\'e, Campus Pierre et Marie Curie
Case courrier 158, 4, place Jussieu 
75252 Paris Cedex 05 
France. {\bf Email}: nicolas.broutin@upmc.fr; {\bf Grant}: ANR-14-CE25- 0014 (ANR GRAAL)} 
% \thanks{PRES Sorbonne Universit\'es, UPMC Universit\'e Paris 06, LPMA (UMR 7599). 
% % {\bf Postal address}: LPMA, Bo\^ite courrier 188, 4 place Jussieu, 75252 Paris Cedex 05, France. 
% {\bf Email: }nicolas.broutin@upmc.fr. {\bf Grant}: ANR-14-CE25- 0014 (ANR GRAAL)}
\and
Henning Sulzbach 
\thanks{University of Birmingham, School of Mathematics, B15 2TT, Birmingham, UK.
{\bf Email}: henning.sulzbach@gmail.com. The author's research was partially supported by a Feodor Lynen Fellowship of the Alexander von Humboldt-Foundation.}
}

\date{\today} 
\maketitle

\begin{abstract}
We consider the cost of general orthogonal range queries in random quadtrees. The cost of a given query is encoded into a (random) function of four variables which characterize the coordinates of two opposite corners of the query rectangle. We prove that, when suitably shifted and rescaled, the random cost function converges uniformly in probability towards a random field that is characterized as the unique solution to a distributional fixed-point equation. We also state similar results for $2$-d trees. Our results imply for instance that the worst case query satisfies the same asymptotic estimates as a typical query, and thereby resolve an open question of Chanzy, Devroye and Zamora-Cura [\emph{Acta Inf.}, 37:355--383, 2001].  

\medskip
\noindent
{\em AMS 2010 subject classifications.} Primary 60C05,  60F17; secondary 68P20, 60D05, 60G60.  \\
{\em Key words.} quadtree, random partition, convergence in distribution, contraction method, range query, partial match, analysis of algorithms.
\end{abstract} 

% {\small
% \tableofcontents
% }

% \newpage

\section{Introduction}

\subsection{Quadtrees and structures for geometric data}

Geometric data are central in a number of practical contexts, such as computer graphics, management of geographical data or statistical analysis. Data structures storing such information should allow for efficient dictionary operations such as updating the data base and retrieving data matching specified patterns. For general references on multidimensional data structures and more details about their various applications, see the series of monographs by \citet{Samet1990a,Samet1990,Samet2006}. 

We are interested in tree-like data structures which permit efficient execution of search queries. In applications, one of the essential basic type of queries type are \emph{(orthogonal) range queries}, which ask to report all data located inside some axis parallel rectangular region. Such queries include the case when some of the projections of the rectangular region on the axes are either reduced to a point or span the entire domain. So, in particular, range queries cover the following two cases: 
\begin{itemize}
    \item When the pattern specifies precisely all the data fields (the query rectangle is a point), we speak of an \emph{exact match}. Such queries can typically be answered in time logarithmic in the size of the database, since only one branch needs to be explored. 
   
    \item When the projections of the query rectangle on the different axes are either points or the entire domain, we speak of a \emph{partial match}. In general, such searches explore multiple branches of the data structure to report the matching data, and the cost usually becomes polynomial. 
    % In this case, the literature is much more recent: for instance, the extend of the fluctuations for the cost in the classical data structures has only been settled a few years ago.
\end{itemize}

We are interested in the comparison-based setting, where the data may be compared directly at unit cost. In this context, a few general purpose data structures generalizing binary search trees permit to answer orthogonal range queries, namely the  quadtree \cite{FiBe1974}, the $k$-d tree \cite{Bentley1975} and the relaxed $k$-d tree \cite{DuEsMa1998}. Since range queries are at the same time an essential building block of many other algorithms and still rather elementary, one would expect that their complexity should be fully understood by now. This is not the case: despite their importance, a precise quantification of the complexity of range queries in the data structures listed above is still missing. We will shortly review the literature precisely, but let us for now point out that, before the present document, even the average value in the context of uniformly random data points and a uniformly random query was only known up to a multiplicative factor.

In this paper, we provide refined analyses of the costs of orthogonal range queries in the two-dimensional data structures mentioned earlier. We mostly focus on quadtrees, but our results also apply (modulo some easy model-specific modifications) to the case of $2$-d trees and  relaxed $2$-d trees. We only sketch the results for $2$-d trees,  since the phenomena at hand and the proofs are completely analogous, and the cases of partial match queries have been treated in \cite{BrNeSu13} (see Section~\ref{sec:kd});  the case of relaxed $2$-d trees is also similar, but we leave it as an exercise to keep the present paper as concise as possible. Our results provide, for the first time, a study of the extent of the fluctuations of the complexity, precise asymptotic estimates for all moments and convergence in distribution, jointly for all axis-parallel rectangular queries.

\medskip
\noindent\textbf{Remarks.} We emphasize the fact that we are interested in \emph{general purpose data structures}: for instance, the range trees of Bentley \cite{Bentley1979a} (and their close relatives, see \cite{BeFr1979}) are complex data structures tailored to answer range queries very fast, but this is mostly a theoretical benchmark since the space required is super-linear in the number of data points. On the other hand, the squarish $k$-d trees introduced by \citet*{DeJaZa2001} would fit; unfortunately, while it is very interesting, our results do not apply to these trees. We plan to address the complexity of orthogonal range queries in squarish $k$-d trees in the near future.

\medskip
Before going any further, let us introduce two-dimensional quadtrees and range queries. Given a sequence of points (also called \emph{keys}) $(p_i)_{i\ge 1} \in [0,1]^2$ we define a sequence of quadtrees $(T_n)_{n\ge 1}$, together with a recursive partition of the unit square. We can see the construction algorithm as inserting the points successively and ``storing'' them in a node of a quaternary tree. We proceed as follows: Initially, we think of $T_0$ as an empty tree, which consists of a placeholder to which we assign the unit square. The first point $p_1$ is inserted in this placeholder and becomes the root, thereby giving rise to four placeholders. Geometrically, $p_1$ decomposes the unit square into four rectangular regions $Q_1, \ldots, Q_4$, situated, in this order, south-west, north-west, south-east and north-east of $p_1$, each of which is assigned to a child of the root (currently placeholders). Suppose that we have constructed $T_n$ by successive insertion of $p_1, \ldots, p_n$, and that $T_n$ induces a partition of the square into $1+3n$ rectangles, each one assigned to one of the $1+3n$ placeholders of $T_n$. The next point $p_{n+1}$ is then placed in the placeholder, say $v$, that is assigned to the rectangle containing $p_{n+1}$. This operation turns $v$ into a node and creates four new placeholders just below. Geometrically, $p_{n+1}$ divides this rectangle into four subregions that are assigned to the four newly created placeholders. See Figure~\ref{fig:const} for an illustration of the first few steps.
\begin{figure}[t]   
\centering
\includegraphics[width=14cm]{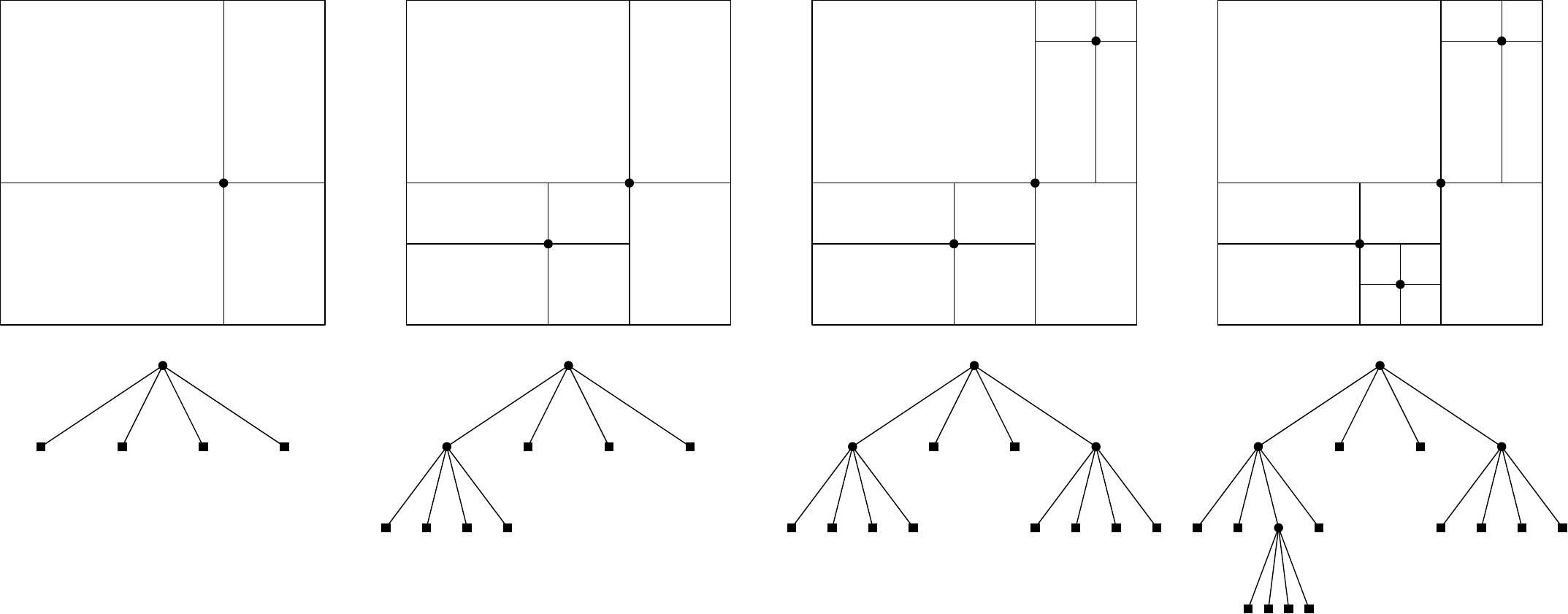} 
\caption{The first four steps in the quadtree construction of the partition, and of the corresponding trees. The placeholders are marked with squares while the actual nodes, which `store' one of the points, are depicted with disks.}
\label{fig:const} 
\end{figure}  

An orthogonal range query asks to retrieve all points among $p_1, \ldots, p_n$ in $T_n$ which are contained inside a given rectangle with (opposite) corners $(a,c)$ and $(b,d)$. This covers partial match queries, aiming at all elements with a given value for the first coordinate, regardless of the second one (taking the degenerate rectangle with corners at $(a,0)$ and $(a,1)$). Further, the set-up also includes a fully specified query aiming at determining whether $T_n$ contains the element $(a,c)$ (taking the degenerate rectangle with corners $(a,c)$ and $(a,c)$). The search algorithm explores recursively the nodes of the tree corresponding to regions that have a non-empty intersection with the query (see later on for a precise definition of the regions and query, and in particular of which of their boundaries are included). As a basic measure of complexity, we consider the number of nodes inspected by the algorithm, where placeholders are not taken into account. (See Figures~\ref{fig:queries} and~\ref{fig:quadtree_query} for illustrations of the different query types and  Remark \ref{rem:ext} in Section \ref{sec:strategy} for the model incorporating placeholders.)

\subsection{The cost of queries in random quadtrees }

In a standard probabilistic model, the quadtree $T_n$ is constructed from the first $n$ points $X_1,  \ldots, X_n$ of a single infinite sequence of i.i.d.\ random variables $(X_i)_{i\ge 1}$ with uniform distribution on $[0,1]^2$. In the notation introduced above, we set $p_i  = X_i$ for all $i \geq 1$. Complexities of range queries are now random variables and one is mostly interested in their average behaviour and fluctuations but also in distributional limit theorems.

% {\red Give the general geometric correspondence: a node is inspected by the query if and only if the rectangle intersects the query. So in general, this is if the rectangle contains one of the corners; for partial match, this is also the line emanating from the node intersects the (flat) query rectangle; and for fully specified, this is the point lies in the rectangle.}

\medskip \noindent\textbf{Fully specified queries and distances.} 
Here, the cost corresponds to the number of nodes on the search path, or, equivalently to the number of rectangles that contain the query point in the full refining family of nested partitions. In our probabilistic model, fully specified queries are well understood. It is known that the corresponding mean complexity is asymptotic to $\log n$ \cite{DeLa1990, FlGoPuRo1993}, the variance grows like $\frac 1 2 \log n$ \cite{FlGoPuRo1993}, and that there is a central limit theorem \cite{FlLa1994, Devroye1998}. For all these results, note that this essentially corresponds to the cost of a \emph{random} query, but adding the reference to a point in $[0,1]^2$ would not make a difference (except on the edges, where the leading constant is different). 
The extreme cost is the height of the tree, and Devroye has proved that it is asymptotic to $\alpha \log n$, where $\alpha = 2.15\ldots$ \cite{Devroye1987}. 
% \footnote{For $d=2$ this is all in \cite{FlGoPuRo1993}; {\nic what is `all' ?} {\hen swapped these two sentences since the height does not refer to a random query. By all I meant mean, variance and CLT. There was a reference
% to Flajolet et al. 1995 which is not needed for $d=2$.}} 

\medskip \noindent \textbf{Partial match queries.}
The history case of partial match queries stretches over a much longer period and has only been finely determined very recently \cite{BrNeSu2011a,BrNeSu13}. For $t \in [0,1]$, let $C_n(t)$ denote the number of nodes visited by a query in $T_n$ retrieving all keys with first coordinate $t$. Equivalently, $C_n(t)+1$ is given by the number of rectangles in the partition of $[0,1]^2$ induced by $T_n$ intersecting the vertical line at $t$. See Figure~\ref{fig:queries} for an illustration. Throughout the document, $\xi$ denotes a generic random variable with uniform distribution on $[0,1]$ which is stochastically independent of $(X_i)_{i\ge 1}$. In their seminal work on properties of 
random quadtrees, \citet*{FlGoPuRo1993} considered the case of random queries and showed that
\begin{align} \label{limit_unif} \E{C_n(\xi)} = \kappa n^{\beta} + O(1), \quad n \to \infty, \end{align}
where
\begin{align} 
\label{limit_unif_const} 
\beta = \frac{\sqrt{17} -3}{2}, \qquad \text{and} \qquad \kappa = \frac{\Gamma(2\beta+3)}{2\Gamma(\beta+1)^3}, 
\end{align}
and $\Gamma(\,\cdot\,)$ denotes the Gamma function. In fact,  \citet*{FlGoPuRo1993} provide a full asymptotic expansion for $\Ec{C_n(\xi)}$ which was generalized to the higher dimensional case by \citet{ChHw2003}. In the early 2000s, using the recursive approach underlying the results obtained by \citet{FlGoPuRo1993}, there were some attempts to obtain more detailed asymptotic information such as the variance or the limit distribution of $C_n(\xi)$. However, it turns out that for a random query $\xi$, there is no obvious recursion for higher moments since the query couples the subproblems (the location is the same in subproblems!). To circumvent this issue, one can consider fixed queries. It is only recently that this route has been successfully explored: Curien and Joseph \cite{CuJo2010} proved that, for fixed $t \in [0,1]$, as $n \to \infty$,
\begin{align} 
\label{limit_t}
\Ec{C_n(t)} = K_1 h(t) n^{\beta} + o(n^{\beta}), 
\end{align}
where   
\begin{align} 
\label{limit_t_const} 
h(t) = (t(1-t))^{\beta/2}, 
\qquad \text{and} \qquad 
K_1= \frac{\Gamma(2\beta+2)\Gamma(\beta+2)}{2\Gamma(\beta+1)^3\Gamma(\beta/2+1)^2}. 
\end{align}
In the joint work \cite{BrNeSu13} (see also \cite{BrNeSu2011a}) with Ralph Neininger relying on the functional contraction method developed in \cite{NeSu}, we established a distributional functional limit theorem for the rescaled process $n^{-\beta} C_n$:  upon considering $C_n$ as a right-continuous step function on $[0,1]$, we have the following convergence in distribution
\begin{align} 
\label{lim_fun} 
n^{-\beta} C_n \xrightarrow[n\to\infty]{d} {\cal Z},
\end{align}
where the limit process $\cal Z$ is continuous and is further discussed in Section~\ref{sec:partial_match} below. Here, the convergence is in the space of c{\`a}dl{\`a}g functions on the unit interval. See \cite[Section 3]{Billingsley1999} for background. Based on this result, Curien \cite{cucpc} was able to show that the convergence actually holds in probability (and, for any fixed $t \in [0,1]$, even almost surely). 
\begin{figure}[tb]
    \centering
    \includegraphics[scale=.55]{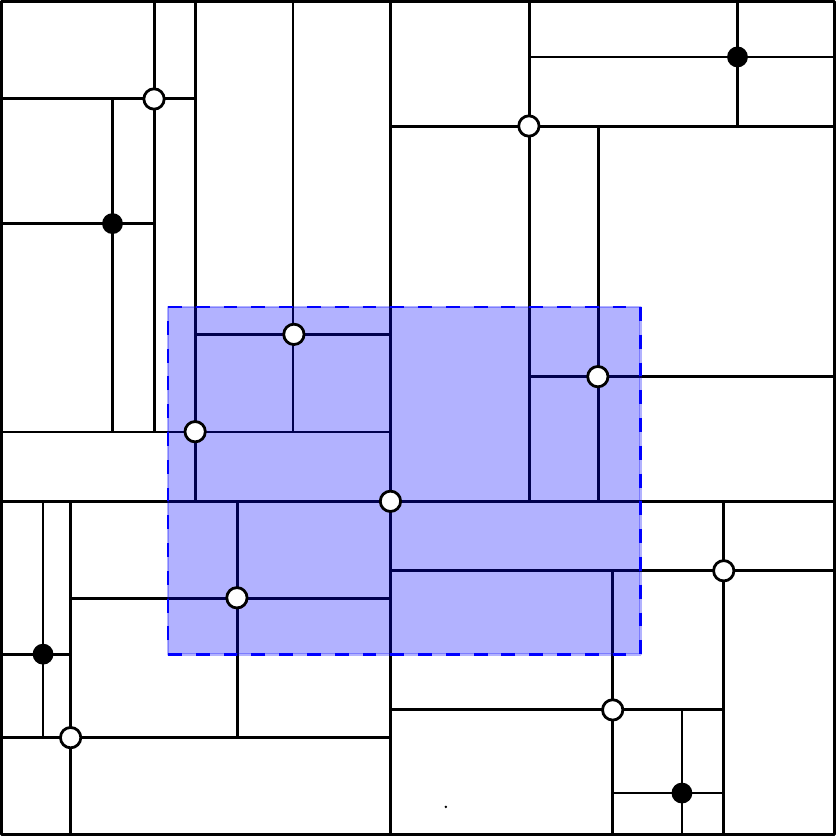}\hspace{1em}
    \includegraphics[scale=.55]{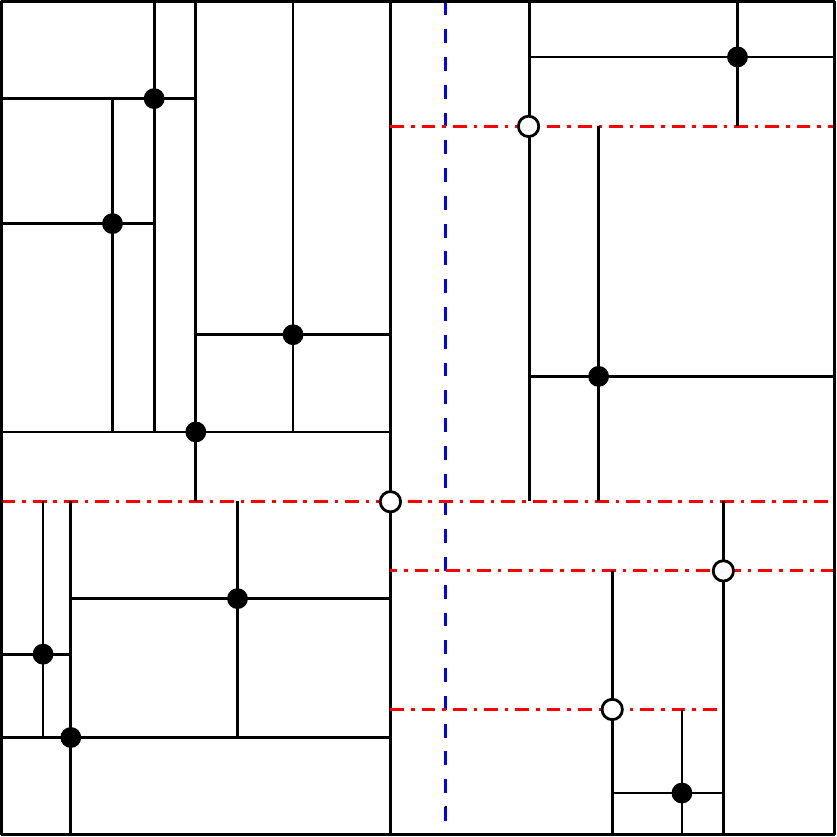}\hspace{1em}
    \includegraphics[scale=.55]{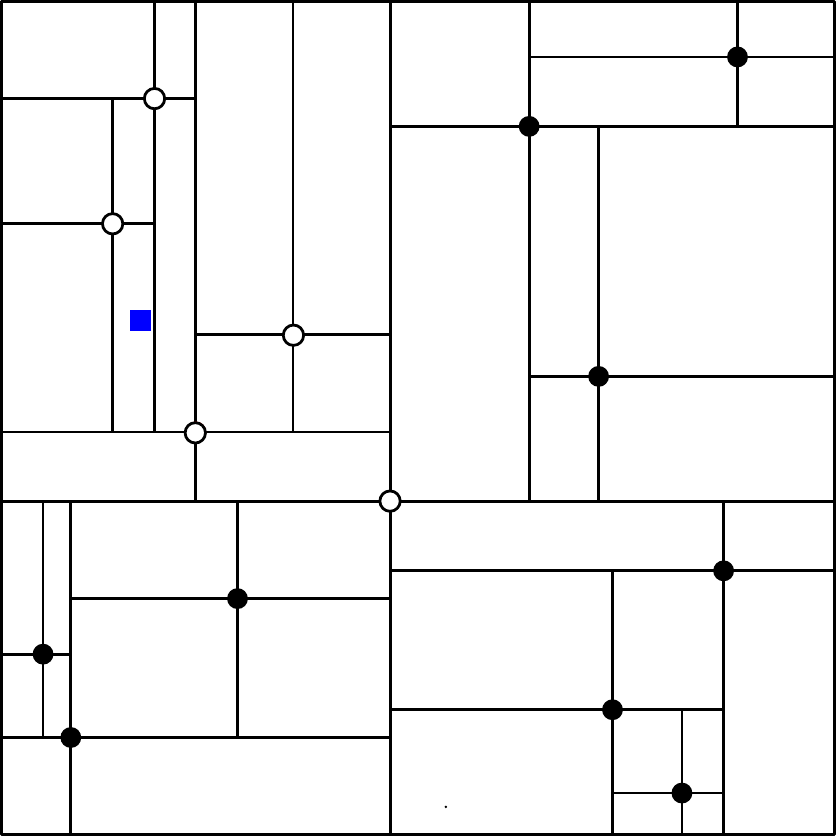}
    \caption{The partition induced by a quadtree and the different kinds of range queries. Each time, the query rectangle (which might be lower dimensional) is depicted in blue (with dashed boundary, except for the exact match on the right where it is the square point), and the nodes visited are shown in white: on the left a non-degenerate range query; in the middle, a partial match query, and on the right, a fully specified query. The corresponding tree is depicted in Figure~\ref{fig:quadtree_query}.}
    \label{fig:queries}
\end{figure}
\begin{figure}[tb]
    \centering
    \includegraphics[width=0.95\textwidth]{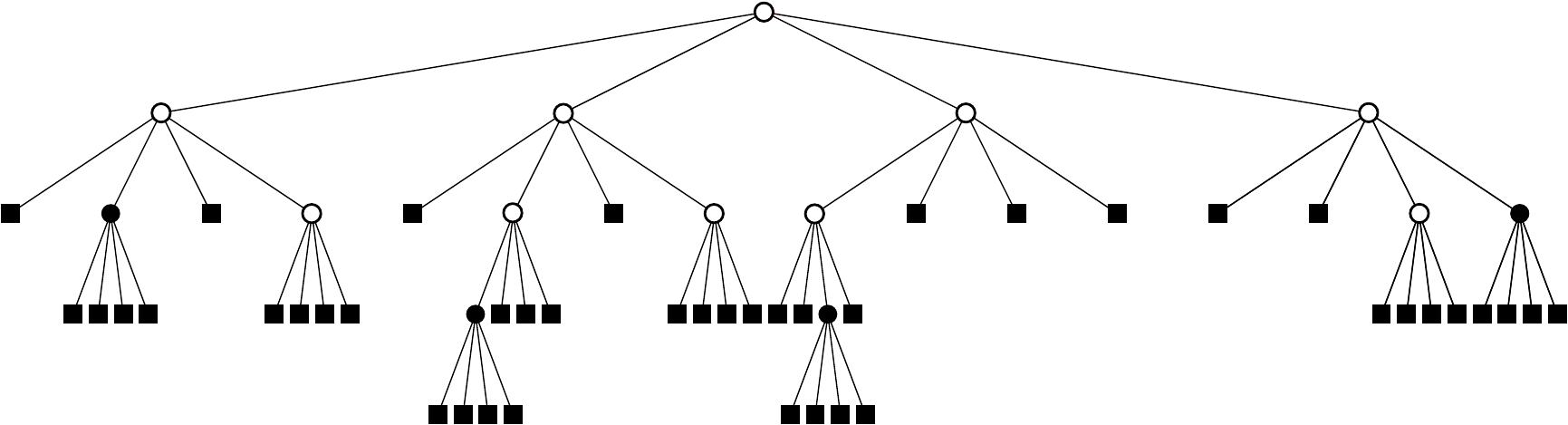}
    \caption{The quadtree corresponding to the partition shown in Figure~\ref{fig:queries}. The nodes in white are the nodes visited by the range query illustrated in the left picture there, and correspond to the nodes in white in that picture. Observe that the placeholders %/external nodes 
    are not counted.}
    \label{fig:quadtree_query}
\end{figure}

\medskip
\noindent \textbf{Orthogonal range queries.} Despite their importance, the cost of range searches has been much less studied. For the closely related structure of $k$-d trees, the first important contribution is due to \citet*{ChDeZa2000} who consider queries anchored at a uniformly random point in the unit square with specific predetermined dimensions $\Delta_1$ and $\Delta_2$ along the first and second coordinates. (These queries are allowed to exit $[0,1]^2$.) For the average cost $m_n$ of such a query, they obtain the following explicit bounds 
\begin{equation}\label{eq:bounds_DeChZa}
\gamma \le \frac{m_n} {\Delta_1 \Delta_2 n + (\Delta_1 + \Delta_2) n^\beta + \log n} \le \gamma',  \quad 0 < \gamma < \gamma' < \infty.
\end{equation}
This exhibits a contribution of the ``volume'' of nodes to report, and a ``perimeter'' effect of the order of magnitude of a partial match. \citet*[Section 8]{ChDeZa2000} also mention that an analogous result should hold for random quadtrees.

% \footnote{\citet{DuMa2002a} have subsequently tried to obtain a precise asymptotic expansion for the range queries: for this, they assume that $\Delta_1,\Delta_2\to 0$, and obtain that $\Ec{O_n}=(1+o(1)) \Delta_1 \Delta_2 n$. The more precise claims are  for Corollary 8 and Equation (3) there are both missing a crucial error term of $O(n \Delta_1 \Delta_2 \max\{\Delta_1,\Delta_2\})$ which might take over subsequent terms.}

\medskip
The aim of the present paper is to prove a limit theorem for the joint complexity of all range searches simultaneously, just as \eqref{lim_fun} for the partial match queries. Such an approach will also give access to the cost of extreme queries, which depend on the data points in an intricate way. We first define the set of all queries for which one should have joint convergence. Let $I = \{(a,b,c,d) \in [0,1]^4 : a \leq b, c \leq d\}$. For $(a,b,c,d)\in I$, let $Q(a,b,c,d) = (a,b] \times (c,d]$ and denote by $O_n(a,b,c,d)$ the number of nodes visited by the algorithm to answer the query with rectangle $Q(a,b,c,d)$ in $T_n$. Formally, letting $\mathcal R_i$ denote the rectangular region associated with the placeholder in $T_{i-1}$ in which the point $X_i$ falls, we have
\begin{align} O_n(a,b,c,d) = \sum_{i=1}^n \mathbf 1_{\mathcal R_i \cap Q(a,b,c,d) \neq \emptyset}. \label{def:On} 
\end{align}
This is well-defined on the set of full Lebesgue measure for which $a,b,c$ and $d$ differ from
all the coordinates of the $X_i$'s regardless of the precise definition of the $\mathcal R_i$ 
(which is given later in \eqref{defRformal} in Section~\ref{sec:partial_match});
% \fnic{For this, we need to see how to do it properly, since the precise definition of the regions matters crucially, and this has not been given yet. So maybe it is not the place? still I agree 
% that we need a proper definition before the statements.}
% \fhen{I would not say it matters crucially. How one defines the boundaries of $\mathcal R_i$ only affects $O_n$ on lower-dim. subsets. Imposing right-continuity settles this and makes the precise definition of these sets irrelevant.}
% The random variable $O_n(a,b,c,d)$ is well-defined except on lower-dimensional subsets of $I$, and 
we agree to extend it by imposing right continuity in all coordinates. Note that, by this convention, the range query is a genuine generalization of partial match queries since, for $t\in [0,1]$, we have $O_n(t,t,0,1) = C_n(t)$. Our main results are presented in the following section.

\subsection{Main results: joint convergence of all orthogonal range queries}
\label{sec:results}

We are interested in the joint asymptotic cost of all range queries with rectangle $Q(a,b,c,d), (a,b,c,d)\in I$, in $T_n$, that is, in the asymptotic behavior of the family of random variables $O_n(a,b,c,d), (a,b,c,d)\in I$. Let $\Cfour^+$ be space of continuous functions on $I$
equipped with supremum norm $\| \cdot \|$ such that $\|f\| = \sup_{(a,b,c,d) \in I } |f(a,b,c,d)|$.
For $(a,b,c,d)\in I$, let $\vol(a,b,c,d)=(b-a)(d-c)$ denote the area of the query rectangle $Q(a,b,c,d)$.
Recall that the sequence of random fields $O_n = O_n(a,b,c,d), n \geq 1$ relies on the sequence of quadtrees  $(T_n)_{n\ge 1}$ constructed from  the same sequence $(X_i)_{i\ge 1}$. Our main result is the following theorem. 

\begin{thm} \label{thm_main}
There exists a random continuous $\Cfour^+$-valued random variable $\cO$ (a random field) such that, in probability and with convergence of all moments,
\[\Big \| \frac{O_n - n \vol }{n^\beta} - \cO\Big \| \xrightarrow[n\to\infty]{} 0\,.\]
\end{thm}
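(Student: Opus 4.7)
The starting point is the recursive self-similar structure of the quadtree. Conditioning on the first inserted point $X_1 = (U,V) \in [0,1]^2$, the unit square is split into four axis-parallel subrectangles $Q_\esw, Q_\enw, Q_\ese, Q_\ene$ of areas $V_\esw = UV, V_\enw = (1-U)V, V_\ese = U(1-V), V_\ene = (1-U)(1-V)$. Conditionally on the counts $N_\esw, N_\enw, N_\ese, N_\ene$ of remaining points that fall in each quadrant, the four subquadtrees are independent quadtrees built on uniform points in their respective rectangles, and the query rectangle $Q(a,b,c,d)$ is split into (up to four) rectangular pieces, each inducing a rescaled range query in one subquadtree. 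Writing $Q(a,b,c,d) \cap Q_i$ for the relevant subrectangle and $\tilde\psi_i$ for the affine map sending $Q_i$ to $[0,1]^2$, one obtains the exact distributional identity
\[
O_n(a,b,c,d) \stackrel{d}{=} \mathbf 1_{n\ge 1} + \sum_{i \in \{\esw,\enw,\ese,\ene\}} \mathbf 1_{Q(a,b,c,d)\cap Q_i \ne \emptyset}\, O^{(i)}_{N_i}\bigl(\tilde\psi_i(Q(a,b,c,d)\cap Q_i)\bigr),
\]
with $O^{(i)}$ independent copies of $O$. Subtracting $n\vol(a,b,c,d)$ and dividing by $n^\beta$, the linear-in-$n$ contribution cancels (by linearity of volume under the decomposition), leaving a recursion for the centered field $Y_n = (O_n - n\vol)/n^\beta$ of the form $Y_n \stackrel{d}{=} \sum_{i} A_{n,i}\, Y^{(i)}_{N_i} \circ \phi_{n,i} + b_n$, where $A_{n,i} \to V_i^\beta$ almost surely (since $N_i/n \to V_i$) and $b_n$ is a bounded toll term arising from the discrepancy between $n\vol$ and its conditional expectation.

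The plan is then to apply the functional contraction method of Neininger--Sulzbach exactly as in \cite{BrNeSu13} for partial match queries, but now in the Banach space $(\Cfour^+,\|\cdot\|)$. The natural candidate limit $\cO$ should satisfy the distributional fixed-point equation
\[
\cO(a,b,c,d) \stackrel{d}{=} \sum_{i \in \{\esw,\enw,\ese,\ene\}} \mathbf 1_{Q(a,b,c,d)\cap Q_i \ne \emptyset}\, V_i^\beta\, \cO^{(i)}\bigl(\tilde\psi_i(Q(a,b,c,d)\cap Q_i)\bigr) + \mathcal T(U,V;a,b,c,d),
\]
with $(\cO^{(i)})$ i.i.d.\ copies of $\cO$ independent of $(U,V)$ and an explicit continuous toll $\mathcal T$ capturing the subdominant volume correction. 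A Banach fixed-point argument in a suitable Zolotarev-type minimal $L^p$ metric on centered $\Cfour^+$-valued random variables yields existence and uniqueness of $\cO$ among fields with prescribed mean and finite enough moments. The required contraction constant here is $\mathbb E\bigl[\sum_i V_i^{p\beta}\bigr] = 4/(p\beta+1)^2$, which is strictly less than $1$ for all $p\ge 2$ because $\beta = (\sqrt{17}-3)/2 > 1/2$; this is the same algebraic mechanism as in the partial match analysis and comes out cleanly.

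The main obstacle, and the bulk of the technical work, will be the uniform estimates needed to (i) bound the moments of $\|Y_n\|$ uniformly in $n$, (ii) establish a uniform modulus of continuity on $Y_n$ giving tightness in $\Cfour^+$, and (iii) pass from the finite-dimensional convergence delivered by the contraction step to convergence in $\|\cdot\|$ with convergence of moments. For (i)--(ii), the strategy is to iterate the recursion, control the regularity of the toll $\mathcal T$ (H\"older in $(a,b,c,d)$ with an explicit exponent tied to $\beta$), and combine this with moment bounds on the $N_i$ via a union bound over a suitable grid in $I$; the partial match case handled a $1$-dimensional index, so here one needs to be careful to get enough H\"older regularity along all four coordinates to survive the chaining argument. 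For (iii), I would first run the contraction method to get convergence in distribution and of moments in $(\Cfour^+,\|\cdot\|)$ to a solution of the fixed-point equation, then upgrade to convergence in probability by a coupling argument along the lines of Curien \cite{cucpc}: because the subtrees under the root are deterministic functions of the original point sequence, the recursion can be iterated along a fixed realization and combined with the a.s.\ convergence of the splitting ratios to show that the distributional limit is in fact almost surely equal to the iterated image of an input-measurable random field, forcing convergence in probability. The continuity of $\cO$ and the extension of $O_n$ by right-continuity together with the modulus bound from (ii) ensure the limit lives in $\Cfour^+$.
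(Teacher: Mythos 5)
Your overall architecture --- set up the exact distributional recursion for $O_n$ over the four subquadrants and run the functional contraction method directly on the four-parameter field in $(\Cfour^+,\|\cdot\|)$ --- is not the route the paper takes, and it contains a genuine gap. The contraction framework of Neininger--Sulzbach that you invoke is developed for random functions of \emph{one} variable (continuous or c\`adl\`ag functions on $[0,1]$); there is no off-the-shelf version for fields indexed by $I\subset[0,1]^4$, and the paper states explicitly that extending the method to functions of several variables is the technically demanding open step (this is the stated reason the $d\ge 3$ case is left open). The tightness/chaining work you defer to items (i)--(iii) is therefore not a routine adaptation but the core missing ingredient; in particular, propagating a uniform modulus of continuity for the centered field in all four coordinates simultaneously through the recursion is exactly what is not known how to do directly. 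The paper circumvents this entirely with a combinatorial identity (Lemma~\ref{lem:decomp}): $O_n - N_n$ is written as a sum of eight ``constrained partial match'' terms $Y_n^{<}, Y_n^{\ge}, \bar Y_n^{<}, \bar Y_n^{\ge}$ evaluated at the corners of the query rectangle, plus four fully-specified-query terms bounded by the height of the tree ($O(\log n)$), plus the point count $N_n$ (handled by the two-dimensional Dvoretzky--Kiefer--Wolfowitz inequality). Each constrained partial match field has only two parameters and, crucially, at any point whose second coordinate differs from $1$ only \emph{one} subtree contributes to its recursion; this lets the authors build the limit $\mathcal Y$ by an explicit almost-sure cascade construction and prove $L^2$ convergence by coupling with the already-established one-dimensional limit $\mathcal Z$, so that no multi-variable contraction argument is ever engaged.

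A second, smaller error: the limit fixed-point equation you write down should not contain a toll term $\mathcal T$. The discrepancy between $n\vol$ and $\sum_i N_i \vol_i$ is of order $\sqrt n$, hence $o(n^\beta)$ since $\beta>1/2$, so it vanishes in the limit and the correct equation is the homogeneous one $\cO \stackrel{d}{=} \sum_{r} D_r(\cO^{(r)})$ of Proposition~\ref{thm2}. This matters for your uniqueness step: a homogeneous equation admits the zero field as a solution and characterizes $\cO$ only up to a multiplicative constant, so a Banach fixed-point argument alone cannot identify the limit --- you must separately pin down the mean function, which is where the function $g$ of Proposition~\ref{prop:mean_main} and the refined one-dimensional mean asymptotics in the style of Curien--Joseph necessarily enter.
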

Observe that the statement in Theorem~\ref{thm_main} also covers query rectangles with zero Lebesgue measure ($\vol(a,b,c,d)=0$). The following straightforward consequence settles the question of \citet*{ChDeZa2000} about the average cost of the worst-case range query:
\begin{cor}We have the following convergence, in probability with convergence of all moments:
\[n^{-\beta} \cdot \sup_{(a,b,c,d)\in I} \Big\{ O_n(a,b,c,d) - n(b-a)(d-c) \Big\} \xrightarrow[n\to\infty]{} \sup_{(a,b,c,d) \in I} \cO(a,b,c,d)\,.\]
\end{cor}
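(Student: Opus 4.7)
The plan is to deduce the corollary directly from Theorem~\ref{thm_main} by observing that the supremum functional
\[
\Phi : \Cfour^+ \to \Rset, \qquad f \mapsto \sup_{(a,b,c,d) \in I} f(a,b,c,d),
\]
is $1$-Lipschitz for the uniform norm: since $I$ is compact and every $f \in \Cfour^+$ is continuous, $\Phi(f)$ is finite, and $|\Phi(f) - \Phi(g)| \le \|f - g\|$ follows from the triangle inequality applied pointwise. Note also that $\sup_{I} \cO = \Phi(\cO)$ is a well-defined finite random variable because Theorem~\ref{thm_main} provides $\cO$ as a continuous field on the compact set $I$.

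First, I would rewrite the quantity of interest as
\[
n^{-\beta} \sup_{(a,b,c,d) \in I}\{O_n(a,b,c,d) - n(b-a)(d-c)\} \;=\; \Phi\!\left(\frac{O_n - n\vol}{n^\beta}\right),
\]
and apply the Lipschitz bound with $f = (O_n - n\vol)/n^\beta$ and $g = \cO$ to obtain the pointwise estimate
\[
\left| n^{-\beta}\sup_{(a,b,c,d) \in I}\{O_n - n\vol\} - \sup_{(a,b,c,d) \in I} \cO \right| \;\le\; \left\| \frac{O_n - n\vol}{n^\beta} - \cO \right\|.
\]
By Theorem~\ref{thm_main}, the right-hand side tends to $0$ in probability, which immediately yields the convergence in probability asserted in the corollary.

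For convergence of all moments, I would raise the preceding pointwise inequality to the $p$-th power and take expectations: Theorem~\ref{thm_main} guarantees that $\E{\|(O_n - n\vol)/n^\beta - \cO\|^p} \to 0$ for every $p \ge 1$, so the same holds for the difference $n^{-\beta}\sup_I\{O_n - n\vol\} - \sup_I \cO$. This $L^p$-convergence for every $p$ transfers to convergence of all (absolute) moments of $n^{-\beta}\sup_I\{O_n - n\vol\}$ towards those of $\sup_I \cO$, via the elementary inequality $|x^k - y^k| \le k\,|x-y|\,(|x|^{k-1} + |y|^{k-1})$ followed by H{\"o}lder's inequality. There is no serious obstacle here: the corollary is essentially a continuous mapping argument for the $1$-Lipschitz functional $\Phi$, combined with the strong modes of convergence already packaged into Theorem~\ref{thm_main}.
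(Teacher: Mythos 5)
Your argument is correct and is exactly the intended one: the paper presents this corollary as a ``straightforward consequence'' of Theorem~\ref{thm_main} without spelling out a proof, the point being precisely that the supremum functional is $1$-Lipschitz for $\|\cdot\|$, so uniform convergence in probability and in every $L^p$ transfers directly to the supremum. Nothing is missing.
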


The limit field $\cO$ is uniquely characterized as the solution to a stochastic fixed-point equation.

\begin{proposition} \label{thm2}
Up to a multiplicative constant, the process $\cO$ is the unique $\Cfour^+$-valued random field (in distribution) with $\Ec{\|\cO\|^2} < \infty$ satisfying the stochastic fixed-point equation
\begin{align} \label{fix:O} 
\cO \stackrel{d}{=} \sum_{r=1}^4 D_r(\cO^{(r)}),
\end{align}
where $\cO^{(1)}, \ldots, \cO^{(4)}$ are copies of $\cO$, $D_1, \ldots, D_r$ are random linear operators defined in \eqref{opD} and the random variables $\cO^{(1)}, \ldots, \cO^{(4)}$, and $(D_1, \ldots, D_4)$ are independent.
\end{proposition}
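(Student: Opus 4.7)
The plan is to use the functional contraction method of \cite{NeSu}, following the scheme successfully applied to partial match queries in \cite{BrNeSu13}.

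\emph{Existence of the fixed point.} The quadtree has a natural recursive decomposition at its root $X_1 = (U,V)$: conditionally on $X_1$ and on the allocation $(N_1,\dots,N_4)$ of the subsequent points among the four quadrants, the four subtrees are independent random quadtrees of sizes $N_1,\dots,N_4$, and a range query $Q(a,b,c,d)$ decomposes into sub-queries on the appropriately clipped and rescaled rectangles. This yields a distributional recursion for $(O_n - n\vol)/n^\beta$ whose coefficients converge to those of \eqref{fix:O} (using $N_r/n \to \vol(Q_r)$ almost surely and, since $\beta > 1/2$, that the $\sqrt{n}$-scale fluctuations of $N_r$ are negligible against $n^\beta$). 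Passing to the limit via Theorem \ref{thm_main} and uniform integrability shows that $\cO$ itself is a solution of \eqref{fix:O}.

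\emph{Uniqueness up to a scalar.} Consider the space of probability measures on $\Cfour^+$ with finite second moment and a prescribed mean function, endowed with the Zolotarev metric $\zeta_2$. The operator
\begin{align*}
T(\mu) = \mathcal L\Big(\sum_{r=1}^{4} D_r(Y^{(r)})\Big), \qquad Y^{(r)} \overset{\mathrm{i.i.d.}}{\sim} \mu,
\end{align*}
with the $D_r$ taken independently of the $Y^{(r)}$, is a strict contraction on this space. Indeed, $D_r$ acts by precomposing its argument with an affine clipping-and-rescaling map of $I$ into itself and multiplying by the scalar $\vol(Q_r)^\beta$, so $\|D_r\|_{\mathrm{op}} \le \vol(Q_r)^\beta$ in sup norm on $\Cfour^+$. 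The standard $\zeta_2$ contraction estimate yields a coefficient
\begin{align*}
\sum_{r=1}^4 \E{\|D_r\|_{\mathrm{op}}^2} \le \sum_{r=1}^4 \E{\vol(Q_r)^{2\beta}} = \bigg(\frac{2}{2\beta+1}\bigg)^2 = \frac{4}{(\sqrt{17}-2)^2},
\end{align*}
which is strictly less than $1$ since $\sqrt{17} > 4$ implies $(\sqrt{17}-2)^2 > 4$. Banach's fixed-point theorem in $(\,\cdot\,,\zeta_2)$ then provides uniqueness within each fixed-mean class.

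\emph{The multiplicative constant.} Since \eqref{fix:O} is linear and homogeneous, every scalar multiple of $\cO$ is again a solution, which is the source of the ``up to multiplicative constant'' statement. Taking expectations in \eqref{fix:O}, the mean function $\E{\cO}$ satisfies a deterministic linear eigenproblem whose solution space can be argued to be one-dimensional (the relevant eigenfunction is compatible, on the partial match diagonal $a=b$, with the shape $K_1 h(a)$ from \eqref{limit_t_const}, integrated against a $(d-c)$-type factor). Pinning down the scalar is thus equivalent to pinning down the mean, and the contraction step above then delivers full distributional uniqueness.

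The main technical obstacle is to establish one-dimensionality of the mean eigenspace and to verify that $T$ preserves finite second moments on $\Cfour^+$, in particular handling the delicate behaviour near degenerate queries where $\vol(Q_r) = 0$ and controlling continuity of sample paths of $\sum_r D_r(\cO^{(r)})$ up to the boundary of $I$. Once these are in place, the functional $\zeta_2$ machinery from \cite{NeSu, BrNeSu13} applies essentially verbatim.
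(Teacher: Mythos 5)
Your overall architecture matches the paper's: existence of a solution by passing to the limit in the discrete recursion for $O_n$ via Theorem~\ref{thm_main}, distributional uniqueness within a fixed-mean class via the contraction condition $\sum_{r}\Ec{\|D_r\|^2}=\gamma=4/(2\beta+1)^2<1$ (this is exactly \cite[Lemma 18]{NeSu} as invoked in Proposition~\ref{prop:idY}, and your computation of $\gamma$ is correct since the four indicator terms inside each single $D_r$ have disjoint supports, so $\|D_r\|=A_r^\beta$), and reduction of the ``up to a multiplicative constant'' clause to uniqueness of the mean function. But the step you defer as ``the main technical obstacle'' --- one-dimensionality of the fixed-point space of the mean operator $G^{**}(f)=\sum_r\Ec{D_r(f)}$ --- is precisely where essentially all of the work in the paper's proof lies, and it is not routine: a direct sup-norm contraction estimate for $G^{**}$ fails because a query rectangle straddling several quadrants receives contributions from \emph{all four} operators $D_r$ simultaneously, so the naive Lipschitz constant is $\sum_r\Ec{A_r^\beta}=4/(\beta+1)^2>1$; and the Jensen/Cauchy--Schwarz trick that rescues the analogous operator $G$ in Proposition~\ref{prop:meanY} requires the competing terms to have disjoint supports, which they do not here. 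Moreover, on the partial match diagonal $a=b$, $c=0$, $d=1$ the linearized operator has eigenvalue $1$ with eigenfunction $h$, so no contraction argument can work there at all.

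The paper resolves this by a boundary bootstrap that your sketch does not contain. Observe from \eqref{opD} that whenever the query straddles more than one quadrant, every term of every $D_r$ except the ``query entirely inside $Q_r$'' term evaluates $f$ at a point of $I$ with some coordinate pinned to $0$ or $1$. One therefore first identifies two normalized fixed points $\varrho_1,\varrho_2$ on all such boundary sections: (a) on the diagonal $\varrho_i(x,x,0,1)$ via the non-contractive uniqueness argument of \cite[Section~5]{CuJo2010} (which is where the normalization $\Ec{\varrho_i(\xi,\xi,0,1)}=\kappa$ enters); (b) on the two-parameter sections $\varrho_i(0,x,0,y)$, etc., which reduce to the operator $G$ of Proposition~\ref{prop:meanY}; (c) on the three-parameter sections. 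Only after $\varrho_1-\varrho_2$ is known to vanish on all these sections do the surviving terms of $\sum_r D_r(\varrho_1-\varrho_2)$ have disjoint supports, at which point the Jensen argument delivers the factor $\sqrt\gamma<1$ and hence $\varrho_1=\varrho_2$ on all of $I$. Without this cascade your ``deterministic linear eigenproblem'' claim is unsupported, so as written the proposal has a genuine gap at its central step.
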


Proposition~\ref{thm2} only characterizes the distribution of $\cO$ up to a multiplicative constant. The following proposition identifies the limit mean, and hence the missing multiplicative constant. 

% Let $\cB[0,1]$ denote the set of Borel measurable functions on $[0,1]$. Let $M: \cB[0,1]\to \cB[0,1]$ such that, for $f\in \cB[0,1]$, we have, for every $s\in [0,1]$,
% \[M[f](s) = \frac{\beta + 1}{2} \left( \int_s^1 v^\beta f \left(\frac s v \right) dv + \int_0^s (1-v)^\beta f \left(\frac {s-v} {1-v} \right) dv \right) + \frac 1 2 s^{\beta+1}.\] 

\begin{proposition}\label{prop:mean_main} Let $(a,b,c,d)\in I$. Then, we have
\[\Ec{\cO(a,b,c,d)} = \frac 1 2 \Big( \mu(a,d) - \mu(a,c) + \mu(b,d) - \mu(b,c) + \mu(c,b) - \mu(c,a) + \mu(d,b) - \mu(d,a)\Big),\]
where $\mu(t,s) = K_1 h(t) g(s)$, the function $h(t)$ and the constant $K_1$ are defined in \eqref{limit_t_const}, and $g$ is a continuous and monotonically increasing bijection on $[0,1]$ satisfying $g(s) = 1 - g(1-s)$ for every $s \in [0,1]$. Furthermore $g$ is ${\cal C}^\infty$ on $(0,1)$ and the unique bounded measurable function on $[0,1]$ satisfying, for every $s\in [0,1]$,
\begin{align} \label{fix:g}  
g(s) = \frac{\beta + 1}{2} \left( \int_s^1 v^\beta g\!\left(\frac s v \right) dv + \int_0^s (1-v)^\beta g\!\left(\frac {s-v} {1-v} \right) dv \right) + \frac 1 2 s^{\beta+1}\,.
\end{align}
\end{proposition}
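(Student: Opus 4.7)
The strategy is to identify $\E{\cO(a,b,c,d)}$ as the limit of the boundary contribution to the expected search cost, and then derive~\eqref{fix:g} by conditioning on the root. By the first-moment convergence of Theorem~\ref{thm_main},
\[\E{\cO(a,b,c,d)} = \lim_{n\to\infty} n^{-\beta}\bigl(\E{O_n(a,b,c,d)} - n\,\vol(a,b,c,d)\bigr).\]
Since $X_i\in \mathcal{R}_i$, the decomposition $\mathbf{1}_{\mathcal{R}_i\cap Q\neq\emptyset} = \mathbf{1}_{X_i\in Q} + \mathbf{1}_{X_i\notin Q,\,\mathcal{R}_i\cap Q\neq\emptyset}$, together with the exact identity $\E{\#\{i: X_i\in Q\}} = n\,\vol(Q)$, identifies this limit with $\lim_n n^{-\beta}\,\E{\#\{i: X_i\notin Q,\,\mathcal{R}_i\cap Q\neq\emptyset\}}$. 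Such indices correspond to cells that straddle $\partial Q$ while their anchor $X_i$ lies outside $Q$. I would classify these cells by which of the four sides of $\partial Q$ is nearest to $X_i$; cells whose anchor sits in one of the four corner regions are absorbed into an $O(\log n)$ remainder since they must contain the corresponding corner of $Q$ and hence lie on a single ancestor path in the tree.

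For the left side (anchor with $X_i^{(1)}<a$ and $X_i^{(2)}\in(c,d]$), introduce
\[M_n(t,s) = \#\{i:\, x\text{-int}(\mathcal{R}_i) \ni t,\; y\text{-int}(\mathcal{R}_i) \cap [0,s] \neq \emptyset\},\]
so the number of cells straddling $\{a\}\times(c,d]$ equals $M_n(a,d) - M_n(a,c) + O(\log n)$. Conditionally on $\mathcal{R}_i = [x_1,x_2]\times[y_1,y_2]$, the anchor $X_i$ is uniform on $\mathcal{R}_i$, so $\Prob{X_i^{(1)}<a \mid \mathcal{R}_i} = (a-x_1)/(x_2-x_1)$. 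The crucial step, and the main obstacle, is to show that this conditional probability averages to $\tfrac{1}{2}$ over straddling cells in the limit $n\to\infty$; the cleanest route is a scaling argument showing that the relative position $(a-x_1)/(x_2-x_1)$ evolves along each root-to-cell path as a Markov chain on $(0,1)$ that is uniformly ergodic with uniform stationary distribution, so for the cells at depth $\Theta(\log n)$ dominating the count, $a$ occupies an asymptotically uniform relative position. Summing the four side contributions, and using the $x\leftrightarrow y$ distributional symmetry of the quadtree for the top and bottom sides, yields the formula of Proposition~\ref{prop:mean_main} with $\mu(t,s):=\lim_n n^{-\beta}\,\E{M_n(t,s)}$.

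To derive $\mu(t,s) = K_1 h(t) g(s)$ together with~\eqref{fix:g}, condition $M_n(t,s)$ on the root $X_1=(U,V)$ and on the multinomial sub-tree sizes. The segment $\{t\}\times[0,s]$ lies in the two quadrants on the same side of $x=U$ as $t$ and splits further across $y=V$ when $s\geq V$, in which case it contributes a full partial match below together with a restricted one above. Taking $n\to\infty$ using that $\E{(n_r/n)^\beta \mid U,V}$ tends to the $r$-th quadrant's area raised to the $\beta$-th power, one obtains an integral equation whose $t$-dependent pieces collect via the identity
\[\int_t^1 h(t/U)\,U^\beta\,dU + \int_0^t h\bigl((t-U)/(1-U)\bigr)\,(1-U)^\beta\,dU = \tfrac{\beta+1}{2}\,h(t),\]
which itself characterizes $h$ in the partial-match analysis of~\cite{CuJo2010}. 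Dividing through by $K_1 h(t)$ produces~\eqref{fix:g} independently of $t$, and in particular confirms the factorization.

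The remaining properties follow quickly. Normalization $g(0)=0$ and $g(1)=1$ comes from $\mu(t,0)=0$ and $\mu(t,1)=K_1 h(t)$ (cf.~\eqref{limit_t}). The symmetry $g(s)=1-g(1-s)$ follows from the reflection invariance $y\mapsto 1-y$ of the quadtree distribution: this takes $\E{M_n(t,s)}$ to $\E{C_n(t)} - \E{M_n(t,1-s)} + O(\log n)$, where the correction is the count of cells straddling $y=1-s$ along $x=t$, which form a single ancestor path. Smoothness on $(0,1)$ follows from the substitutions $u=s/v$ and $u=(s-v)/(1-v)$ in~\eqref{fix:g}, which rewrite both integrals with kernels depending smoothly on $s$; bootstrapping yields $\mathcal{C}^\infty$. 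Monotonicity propagates by iteration from the strictly monotone source $s^{\beta+1}/2$. Uniqueness among bounded measurable solutions of~\eqref{fix:g} follows from iterating the associated operator on the difference of two solutions (the source cancels), forcing convergence to zero; alternatively one can invoke the contraction-method uniqueness underlying Proposition~\ref{thm2}.
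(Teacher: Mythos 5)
Your overall route parallels the paper's: the volume term, the four one-sided boundary contributions, the corner terms absorbed into an $O(\log n)$ ancestor-path count, the derivation of \eqref{fix:g} by conditioning on the root and factoring out $h$ via the eigenfunction identity $\int_t^1 u^\beta h(t/u)\,du+\int_0^t(1-u)^\beta h((t-u)/(1-u))\,du=\tfrac{\beta+1}{2}h(t)$, and Banach contraction for uniqueness. The difference is that the paper works with the limit fields $\mathcal Y,\overline{\mathcal Y}$ and takes expectations in their almost-sure fixed-point equation \eqref{fix:lim2}, whereas you work with finite-$n$ expectations and pass to the limit in the recursion; the latter requires a uniform-integrability/domination step (e.g.\ $\sup_n\sup_t n^{-\beta}\Ec{C_n(t)}<\infty$) that you do not supply, though it is available from the cited literature. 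Your additional observations (the reflection argument for $g(s)=1-g(1-s)$, the substitution $w=s/v$ giving smoothness, and monotonicity via the operator preserving increasing $[0,1]$-valued functions) are correct and actually go beyond what the paper writes out.

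The genuine gap is the factor $\tfrac12$. You correctly identify it as the main obstacle, but the proposed resolution --- that the relative position $(a-x_1)/(x_2-x_1)$ is a uniformly ergodic Markov chain on $(0,1)$ with uniform stationary distribution, so the conditional probability $\Prob{X_i^{(1)}<a\mid\mathcal R_i}$ averages to $\tfrac12$ --- is not a proof and misidentifies the mechanism. The quantity to control is $\Ec{\sum_i\mathbf 1_{\{\text{straddling}\}}(\tfrac12-p_i)}$, an average over the \emph{entire fragmentation} of straddling cells weighted by the branching structure, not along a single path driven by the unweighted transition kernel; the relevant size-biased law of $p_i$ need not be uniform (only its asymptotic symmetry under $p\mapsto 1-p$ is needed), and the chain is emphatically not uniformly ergodic near the endpoints $\{0,1\}$ --- that boundary behaviour is precisely what produces the exponent $\beta/2$ in $h$. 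Moreover one needs this averaging with a quantitative rate, uniformly in the query coordinates, to combine with the rest of the argument. This is exactly the content of Lemma~\ref{lem:mean}, about which the paper explicitly states that no soft argument is available and that the entire Curien--Joseph coupling of \cite{CuJo2010} and \cite[Lemma 14]{BrNeSu13} must be redone for the one-sided quantities $C_n^{\geq},C_n^{<}$ (the asymmetry induced by which side of $\{x=t\}$ the root falls on means $\Ec{C_n^{<}(t)}\neq\Ec{C_n^{\geq}(t)}$ for finite $n$ and $t\neq\tfrac12$). As it stands your argument asserts the key limit rather than establishing it.
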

\begin{figure}
\centering
\includegraphics[scale=.4]{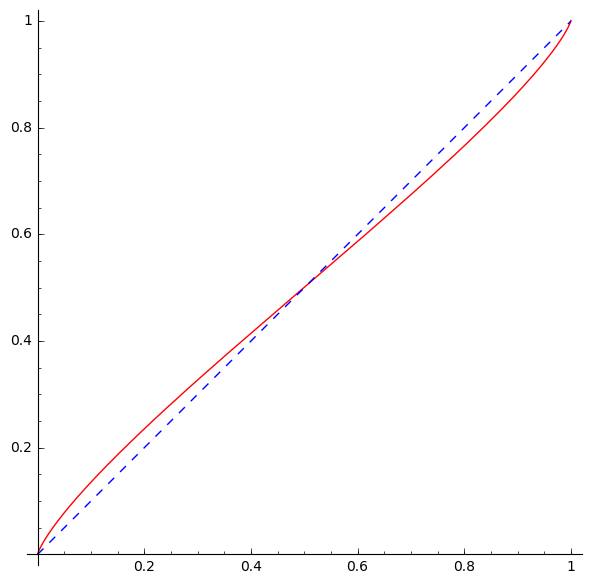}
\caption{\label{fig:iteration_g} The function $g$ obtained by iteration in solid line, and the identity (dashed) for comparison.} 
\end{figure}

\medskip
\noindent \textbf{About the higher-dimensional case.} All results in the present paper crucially rely on the limit theorems for partial match complexities formulated in \cite{BrNeSu13}, whose proofs are based on the functional contraction method developed in \cite{NeSu}. While high-dimensional analogues of the results summarized on fully specified queries, the mean complexity of partial match queries \eqref{limit_unif} and inequalities of type \eqref{eq:bounds_DeChZa} are known \cite{DeLa1990, FlGoPuRo1993, FlLa1994, Devroye1998, FlLaLaSa1995, ChHw2003, ChDeZa2000}, 
generalizations of our results would require to extend the contraction method to functions of multiple variables which is technically more demanding. For instance, even the high-dimensional analogues of the results in \cite{BrNeSu13} are unknown. We intend to carry out an analysis of partial match and range queries in the $d$-dimensional case, for $d\ge 3$, elsewhere. Here, it is important to observe that, while we are dealing with random functions of multiple variables, we never use directly the contraction method for these fields thanks to a number of couplings. For the curious reader, we conjecture that Theorem \ref{thm_main} extends to the $d$-dimensional case where $O_n$ stands for the cost of a query given by a $d$-dimensional cylinder, the limit field is parametrized by $2d$ coordinates, and  $\beta$ is to be replaced by a constant $1/2 < \beta_d < 1$ governing the complexity of partial match queries in $d$-dimensional quadtrees when one data field is fixed and $d-1$ data fields are unspecified. (By \cite[Theorem 5]{FlGoPuRo1993}, $\beta_d$ is the unique solution to the equation
$(x+2)(x+1)^{d-1} = 2^d$ on the unit interval.)

% {\nic
% The results in the present paper are restricted to two-dimensional structures for technical reasons: while we are dealing with random functions of multiple variables, we never use directly the functional contraction method for these fields thanks to a number of couplings. The higher dimensional analogues require to deal more directly with the contraction method for functions of multiple variables, and is technically more demanding. For instance, even the high-dimensional equivalent of the results in \cite{BrNeSu13} are unknown. The $d$-dimensional case, for $d\ge 3$, will be carried out elsewhere.
% }

\subsection{Strategy of the proof and combinatorial identities}
\label{sec:strategy}

The proof of Theorem~\ref{thm_main} relies on a representation of $O_n(a,b,c,d)$ as a sum of different terms which aims at accounting precisely for the contributions of the ``volume'' and ``perimeter'' effects that are visible in the upper and lower bounds in \eqref{eq:bounds_DeChZa}. 

Every point $p=(t,s)\in [0,1]^2$ partitions the unit square into four regions  
% \fhen{$X$ was not a good notation. $p$ stands for a point and $p_1, p_2$ would again be a bad choice, so I took $t,s$}
\begin{align*}
\SW(p) &= [0,t] \times [0,s]\\
\NW(p) &= [0,t] \times (s,1]\\
\SE(p) &= (t,1] \times [0,s]\\
\NE(p) &= (t,1] \times (s,1]\,.
\end{align*}
% {\hen For $(a,b,c,d)\in I$, the lines $\{x=a\}$, $\{x=b\}$, $\{y=c\}$, and $\{y=d\}$ partition the unit square $[0,1]^2$.} \fhen{Fine, but do we actually use this? Sounds like a redundant sentence.}
Let $R_1=\SW(a,c)=[0,a]\times [0,c]$, $R_2=\NW(a,d)=[0,a]\times (d,1]$, $R_3=\SE(b,c)=(b,1]\times [0,c]$, $R_4=\NE(b,d)=(b,1]\times (d,1]$, be the regions south-west, north-west, south-east and north-east of $Q(a,b,c,d)$, respectively. (See Figure~\ref{fig:decomp} for an illustration.) Let also $S_1=[a,b]\times [0,c]$, $S_2=[0,a]\times (c,d]$, $S_3=(b,1]\times (c,d]$ and $S_4=(a,b] \times (d,1]$ denote the regions south, east, north and west of $Q(a,b,c,d)$. Then $[0,1]^2$ is the disjoint union of $Q(a,b,c,d)$, $R_1,R_2,R_3,R_4$ and $S_1,S_2, S_3, S_4$. 

Fix $n\ge 1$. Let $N_n(a,b,c,d)$ denote the number of points among $X_1, X_2, \ldots, X_n$ that lie within the query rectangle $Q(a,b,c,d)$. For $s,t\in [0,1]$, let $Y_n^<(t,s)$ denote the number of nodes visited to answer the partial match query $\{x=t\}$ such that the corresponding points in $[0,1]^2$ lie in $\SW(t,s)$. So, for instance, the number of points lying in $S_2$ that are visited when answering the partial match query $\{x=a\}$ is $Y^<_n(a,d)-Y^{<}_n(a,c)$. Similarly, define $Y_n^{\ge }(t,s)$ as the number of points lying in $\SE(t,s)$ that are visited by a partial match query at $t$. The functions $\bar Y^{<}_n(t,s)$ and $\bar Y^{\ge }_n(t,s)$ are defined in a symmetric way when exchanging the first and second coordinates of every point $X_1,X_2,\dots, X_n$.

Let $D^{(1)}_n(a,b,c,d)$ denote the number of points among $X_1,X_2,\dots, X_n$ that are lying in $R_1$ and are visited by a fully specified search query in $T_n$ retrieving $(a,c)$. Analogously, let $D^{(2)}_n(a,b,c,d)$, $D^{(3)}_n(a,b,c,d)$ and $D^{(4)}_n(a,b,c,d)$ denote the number of points lying respectively in $R_2$, $R_3$ and $R_4$ that are visited to answer a fully-specified query for $(a,d)$, $(b,c)$ and $(b,d)$, respectively.

We agree that, on horizontal and vertical lines containing points in the set $\{X_1, X_2, \ldots\}$, all functions introduced in the previous paragraph are continuous from the right in all coordinates. 

The following representation is reminiscent of the decomposition in sliced queries by \citet{DuMa2002a}.

\begin{lem}\label{lem:decomp}
Fix $n\ge 1$ and a quadtree on $n$ points. For $(a,b,c,d)\in I$, we have
\begin{align}\label{eq:decomp}
O_n(a,b,c,d) = & ~ N_n(a,b,c,d) \notag \\
& + Y^\geq_n(b,d) -Y^\geq_n(b,c) + Y^<_n(a,d) -Y^<_n(a,c) \notag \\
& + \bar Y ^\geq_n(d,b) - \bar Y^\geq_n(d,a) + \bar Y^<_n(c,b) - \bar Y^<_n(c,a) \notag \\
& + D^{(1)}_n(a,b,c,d) + D^{(2)}_n(a,b,c,d) + D^{(3)}_n(a,b,c,d) +D^{(4)}_n(a,b,c,d).
\end{align}
% Then, the cost $O_n(a,b,c,d)$ of the range query for the rectangle $Q(a,b,c,d)$ satisfies, with probability one, \footnote{I don't understand this. My aim was to work only with right continuous functions. If $O_n$ and $\tilde O_n$ coincide almost everywhere then they must be identical.}
% \begin{equation}\label{eq:decomp_bound}
% \| O_n - \tilde O_n \| \le 4.
% \end{equation}
 \end{lem}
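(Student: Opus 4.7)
The proof amounts to a region-by-region accounting. The nine regions $Q(a,b,c,d)$, $R_1, R_2, R_3, R_4, S_1, S_2, S_3, S_4$ form a partition of $[0,1]^2$, so each of the points $X_1, \ldots, X_n$ lies in exactly one of them. The plan is to classify the visited nodes according to the region in which the underlying point lies, and to check in each of the nine cases that the resulting contribution to $O_n(a,b,c,d)$ matches the corresponding term on the right-hand side of \eqref{eq:decomp}. The only geometric input is that $X_i$ contributes to $O_n(a,b,c,d)$ iff $\mathcal R_i \cap Q(a,b,c,d) \neq \emptyset$, and that, since $\mathcal R_i$ is an axis-aligned rectangle containing $X_i$, this intersection condition collapses to something very simple in each case.

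For $X_i \in Q(a,b,c,d)$ one trivially has $\mathcal R_i \cap Q \ne \emptyset$, contributing $N_n(a,b,c,d)$. For $X_i \in S_1 = (a,b] \times [0,c]$, the $x$-projection of $\mathcal R_i$ already overlaps $(a,b]$ (it contains the $x$-coordinate of $X_i$), so the intersection condition reduces to the statement that the $y$-projection of $\mathcal R_i$ extends above $c$; but this is exactly the condition that the horizontal partial match query at $y = c$ visits $X_i$. Hence the contribution of $S_1$ equals $\bar Y^{<}_n(c,b) - \bar Y^{<}_n(c,a)$. The three other side regions are handled identically: $S_2$ contributes $Y^{<}_n(a,d) - Y^{<}_n(a,c)$ (partial match at $x = a$), $S_3$ contributes $Y^{\geq}_n(b,d) - Y^{\geq}_n(b,c)$ (partial match at $x = b$), and $S_4$ contributes $\bar Y^{\geq}_n(d,b) - \bar Y^{\geq}_n(d,a)$ (partial match at $y = d$).

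For $X_i \in R_1 = [0,a] \times [0,c]$, the intersection $\mathcal R_i \cap Q$ is nonempty iff $\mathcal R_i$ contains a point with $x > a$ and a point with $y > c$; but $\mathcal R_i$ is an axis-aligned rectangle whose SW corner is dominated by $X_i$ (hence by $(a,c)$), so this is equivalent to $(a,c) \in \mathcal R_i$, which is the visit condition for the fully specified query at $(a,c)$. The contribution of $R_1$ is therefore $D^{(1)}_n(a,b,c,d)$, and the three other corner regions are treated analogously via the corners $(a,d)$, $(b,c)$, $(b,d)$. Summing the nine contributions yields \eqref{eq:decomp}.

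The only point requiring care, rather than a genuine obstacle, is the handling of the boundary. The half-open conventions in the definitions of $Q$, the $S_j$'s and the $R_j$'s are precisely chosen so that these nine regions form a partition, and the visit conditions for the partial match and fully specified queries (as well as the range query) are compatible with the right-continuity convention imposed on all the quantities involved. Establishing \eqref{eq:decomp} on the full-measure event that none of $a,b,c,d$ equals any coordinate of $X_1, \ldots, X_n$ reduces the identity to the geometric statements above, and the right-continuity convention then extends it to all of $I$.
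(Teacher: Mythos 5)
Your proof is correct and follows essentially the same route as the paper's: partition $[0,1]^2$ into $Q$, the four corner regions $R_j$ and the four side regions $S_j$, observe that the visit condition $\mathcal R_i \cap Q \neq \emptyset$ reduces to the fully-specified-query condition at the adjacent corner for points in the $R_j$'s and to the relevant partial-match condition for points in the $S_j$'s, and dispose of boundary coincidences via the full-measure event and right-continuity. Your projection-based justification of why the partial match query along the whole line gives the same count as intersection with the bounding segment is a slightly more explicit rendering of the paper's "intersects the line segment separating $S_i$ from $Q$" step, but the argument is the same.
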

 \begin{figure}[htbp]
     \centering
     \includegraphics[width=0.4\textwidth]{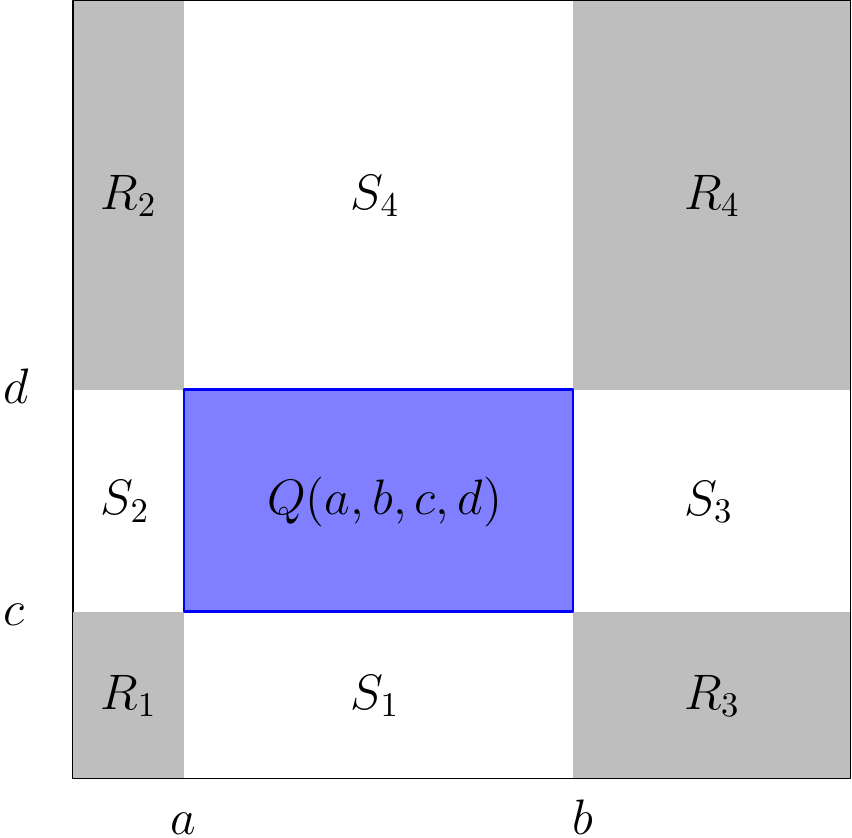}
     \caption{The decomposition in Lemma~\ref{lem:decomp} of the points visited to answer the query with rectangle $Q(a,b,c,d)$, here depicted in blue, and which are counted by $O_n(a,b,c,d)$.}
     \label{fig:decomp}
 \end{figure}
 
\begin{proof}Consider $X_1,X_2,\dots, X_n$ as fixed, and fix $(a,b,c,d)\in I$. The points that are visited to answer some range query may be grouped into different subsets depending on to which part of the unit square among $Q(a,b,c,d)$, $R_1,R_2,R_3, R_4$ or $S_1,S_2,S_3,S_4$ they belong. 

Now, by \eqref{def:On}, a node of $T_n$ storing the point $X_i$ is visited when answering the query with rectangle $Q(a,b,c,d)$ if and only if, at the time of its insertion in $T_n$, the point $X_i$ falls in a rectangle of the partition induced by the quadtree $T_{i-1}$ that intersects $Q(a,b,c,d)$. It follows that the nodes corresponding to the points $X_i$ lying within $Q(a,b,c,d)$ are all visited. Furthermore, the nodes corresponding to the points lying in $R_1$, $R_2$, $R_3$ and $R_4$ are visited when answering the query $Q(a,b,c,d)$ if and only if they are visited when answering the fully specified queries for the points $(a,c)$, $(a,d)$, $(b,c)$ and $(b,d)$, respectively. Finally, we need to consider the points among $X_1,X_2,\dots, X_n$ lying in the rectangles $S_1$, $S_2$, $S_3$ and $S_4$. For these, the rectangle corresponding to a point $X_i$ intersects $Q(a,b,c,d)$ if and only if it intersects the line segment separating $S_i$, $1\le i\le 4$, from $Q(a,b,c,d)$ (open at one of the end points). It follows easily that, aside from the points that may lie on one of the boundaries of the regions, the contributions are precisely the number of points lying in $S_1$, $S_2$, $S_3$ or $S_4$ reported by one of the partial match queries $\{x=a\}$, $\{x=b\}$, $\{y=c\}$, or $\{y=d\}$.  If follows that the claim holds for all $(a,b,c,d)\in I$ provided that no data point lies on any of these lines; the right-continuity of the functions completes the proof.
\end{proof}

With Lemma~\ref{lem:decomp} under our belt, it is now relatively easy to get an intuition for why Theorem~\ref{thm_main} should hold, and what is needed to turn this intuition into a proof. On the one hand, one observes that $N_n(a,b,c,d)$ is distributed as a binomial random variable with parameters $n$ and $\vol(a,b,c,d)$, so that $N_n(a,b,c,d) = n \vol(a,b,c,d) +O(\sqrt n)$ 
in probability\footnote{that is, the sequence of random variables $(N_n(a,b,c,d)-n \vol(a,b,c,d))/\sqrt{n}$ is tight for each $(a,b,c,d)\in I$.}. Since $\beta>1/2$, the error term for the rescaled process is $O(n^{1/2-\beta})=o(1)$. (Inequalities of Dvoretzky--Kiefer--Wolfowitz type show that these expansions hold \emph{uniformly} over all query rectangles $(a,b,c,d)$. 
Details will be given later.)
On the other hand, since $D_n^{(i)}$, $1\le i\le 4$, are bounded by the cost of executing a fully specified query retrieving some point, $\max_{1\le i\le 4} D_n^{(i)}$ is bounded above by the height of the tree $T_n$, and thus $O(\log n)$ in probability (see \cite{Devroye1987}, but much weaker bounds would also suffice). As a consequence, for any fixed $(a,b,c,d)\in I$, the limit of 
\[\frac{O_n(a,b,c,d) - n \vol(a,b,c,d)}{n^\beta}\] 
should be the limit of sums of terms of the form $n^{-\beta}Y_n^<$, $n^{-\beta}Y_n^\ge$, $n^{-\beta}\bar Y_n^{<}$ and $n^{-\beta}\bar Y_n^\ge $. Note that, of course, these terms are not independent; furthermore, the values for different $(a,b,c,d)\in I$ are dependent as well. We will prove that each of these terms converges uniformly on $I$ in probability. The four types of terms we now have to deal with are all ``partial-match''-like, and the technology we have developed in \cite{BrNeSu2011a,BrNeSu13} will come in handy.

\begin{rem} \label{rem:ext} \normalfont In \cite{ChDeZa2000}, the authors use the random variable $O_n^\Box(a,b,c,d)$ defined as the sum of $O_n(a,b,c,d)$ and the number of placeholders in $T_n$ whose associated region intersects the query rectangle $Q(a,b,c,d)$ as measure of complexity of the range query search algorithm. It is easy to see that, for all $(a,b,c,d) \in I$, we have
\begin{align*}O_n^\Box(a,b,c,d) = & ~  O_n(a,b,c,d)  + 3 N_n(a,b,c,d) + 1 \\
& + Y^\geq_n(b,d) -Y^\geq_n(b,c) + Y^<_n(a,d) -Y^<_n(a,c) \\
& + \bar Y ^\geq_n(d,b) - \bar Y^\geq_n(d,a) + \bar Y^<_n(c,b) - \bar Y^<_n(c,a). 
\end{align*}
Thanks to this identity, results similar to those presented in Section \ref{sec:results} can be formulated for the rescaled complexity 
$n^{-\beta}(O_n^\Box - 4 n \vol)$. We omit the details.
\end{rem}

\subsection{The contraction method} 

Most proofs in this work as well as in \cite{BrNeSu13} rely on the contraction method, a methodology which has proved fruitful in the probabilistic analysis of algorithms over the last thirty years. The contraction method was introduced by R{\"o}sler \cite{Ro91} in the analysis of the Quicksort algorithms and later developed by R{\"o}sler \cite{Roesler1992} and Rachev and R{\"u}schendorf \cite{RaRu1995}. We refer the reader  to \cite{RoRu01, NeRu04} for surveys on this topic. 

The main idea of the method is to set up a distributional or almost sure recurrence for the sequence of random variables under investigation and to derive a stochastic fixed-point equation for the corresponding limiting random variable. The proof of convergence relies on a contraction argument based either (i) on  metrics on suitable subspaces of probability distributions on the state space or (ii) on basic distances (often $L_p, p \geq 1$) if couplings of the sequence and the limit are available. The restriction to suitable subspaces typically requires mean or variance expansions of sufficient precision.

While most applications of the technique stay in the framework of real-valued random variables or vectors, extensions to the setting of continuous or c{\`a}dl{\`a}g functions covering both cases (i) and (ii) have been developed in the last few years; see \cite[Section 1]{NeSu} for an overview of the development of the method in general state spaces. In the context of this work, the most relevant literature is \cite{BrNeSu13, NeSu, BrSu15}.

\subsection{Plan of the paper}

The remainder of the paper is organized as follows: In Section~\ref{sec:partial_match}, we introduce the relevant background and constructions about partial match queries and their limit process that needs to be generalized to consider the terms involved in Lemma~\ref{lem:decomp}. Section~\ref{sec:one_sided} deals with the case of one-sided partial match queries, where only the points lying on one or the other side of the query line are reported (terms of the form $Y_n^<(t,1)$ and $Y_n^\ge(t,1)$). Section~\ref{sec:constrained} deals with the further restriction of the count of points depending on their second coordinate (along the direction parallel to the query line), namely terms of the form $Y_n^<(t,s)$ and $Y_n^\ge (t,s)$. We put everything together and complete the proofs in Section~\ref{sec:proofs}. Finally, the extensions to $2$-d trees are presented in Section~\ref{sec:kd}. 

\section{The cost of partial match queries in random quadtrees}
\label{sec:partial_match} 

In this section, we introduce the notation and review the relevant constructions and results about partial match queries that are central to our approach. We refer the reader to \cite{BrNeSu2011a,BrNeSu13} for more details and the proofs.

We call a set $Q \subseteq [0,1]^2$ a \emph{half-open} rectangle if, for some real numbers $a,b,c,d$ such that $0 < a < b \leq 1$ and $0 < c < d \leq 1$, we have
$$Q = \begin{cases} (a,b] \times (c,d], &  \text{or} \\ [0,b] \times (c,d], &  \text{or} \\ (a,b] \times [0,d], &   \text{or} \\ [0,b] \times [0,d]. & 
\end{cases}$$
Let $\mathcal Q$ be the set of all half-open rectangles and $\mathbb T = \bigcup_{ k \geq 0} \{1, 2, 3,4\}^k$ be the infinite quaternary tree of words on the alphabet $\{1,2,3,4\}$: the finite words on $\{1,2,3,4\}$ are the nodes, and the ancestors of some word are its prefixes (including the empty word $\varnothing$). For $u\in \mathbb T\setminus \{\varnothing \}$, the word $\bar u$ obtained by removing the last letter is called the parent of $u$. A subset $A\subset \mathbb T$ is called a tree if it is closed by taking prefixes. For a finite tree $A \subset \mathbb T$, let $\partial A$ be the set of nodes $u\in \mathbb T$ such that $u\not \in A$, but $\bar u \in A$. 

From the sequence $(X_i)_{i\ge 1}$ of random points in $[0,1]^2$, we recursively construct 
\begin{enumerate} \item [(i)] a bijection $\pi : \mathbb N \to \mathbb T$ inducing node labels $X_{\pi^{-1}(v)}$ associated with $v\in \mathbb T$, 
\item [(ii)] a family $\{Q_v \in \mathcal Q: v \in \mathbb T\}$ where $Q_\varnothing = [0,1]^2$, and for all $v\in \mathbb T$, $Q_v$ is the disjoint union of the four half-open rectangles $Q_{v1}, \ldots, Q_{v4}$, and
\item [(iii)] an increasing (for inclusion) sequence of trees $(T_n)_{n\ge 1}$ such that, for every $n\ge 1$, $\{Q_v : v \in \partial T_n\}$ is a partition of $[0,1]^2$.
\end{enumerate}
We proceed as follows. First, let $\pi(1) = \varnothing$, $T_1$ consist of the root node $\varnothing$ and $Q_1:= \SW(X_1)$, $Q_2 := \NW(X_1)$, $Q_3 := \SE(X_1)$ and $Q_4 := \NE(X_1)$
 be the four half-open rectangles generated by insertion of $X_1$ in $[0,1]^2$. Next, for $n \geq 1$, having defined $\pi(j)$ for all $j  \leq n$, the tree $T_n$ and rectangles $Q_v$ for all $v \in \partial T_n$ such that $\{Q_v : v \in \partial T_n\}$ is a partition of $[0,1]^2$, we let $\pi(n+1)$ be the unique node $v \in \partial T_n$ with $X_{n+1} \in Q_v$. Further, $T_{n+1} := T_n \cup \{v\}$ and $Q_{v1}, \ldots, Q_{v4}$ are defined respectively as $Q_v\cap \SW(X_{n+1})$, $Q_v\cap \NW(X_{n+1})$, $Q_v\cap \SE(X_{n+1})$ and $Q_v\cap \NE(X_{n+1})$. The partition of the unit square induced by $T_{n+1}$ is then given by $\{Q_v : v \in \partial T_{n+1}\}$. 
 The sets $(\mathcal  R_i)_{i \geq 1}$ from definition \eqref{def:On}  can be chosen as
 \begin{align} \label{defRformal}
 \mathcal  R_i = Q_{\pi(i)}, \quad i \geq 1.
 \end{align} 
Let $Q_v^{(i)}, i=1,2$ be the projection of $Q_v$ on the $i$th component. For $v \in \mathbb T$, we define the time-transformations $\varphi_v, \varphi_v'$ quantifying the position of a point $(t,s)$ relative to the boundary of $Q_v$: we set
\begin{align} \label{phi1} \varphi_v(t) =  \mathbf 1_{Q_{v}^{(1)}}(t) \frac{t - \inf Q_{v}^{(1)}}{\sup Q_{v}^{(1)} - \inf Q_{v}^{(1)}}, \quad t\in [0,1],  \end{align} and \begin{align} \label{phi2} \varphi_v'(s) =  \mathbf 1_{Q_{v}^{(2)}}(s) \frac{s - \inf Q_{v}^{(2)}}{\sup Q_{v}^{(2)} - \inf Q_{v}^{(2)}}, \quad s\in [0,1]. \end{align}
With $X_{\pi^{-1}(v)} = (x_1^v, x_2^v)$, we set
$U^v := \varphi_v(x_1^v)$ and  $V^v := \varphi_v'(x_2^v).$
By construction, $\{(U^v, V^v): v \in \mathbb T \}$ is a family of independent random variables with the uniform distribution on $[0,1]^2$. To keep the notation simple, we write $U := U^\varnothing$ and $V := V^\varnothing$; more generally, we usually drop the reference to $\varnothing$ when the meaning is clear from the context.

\medskip 
\noindent\textbf{The limit process $\cal Z$.}
In the remainder of the manuscript, we write $\Ck, k = 1,2$ for the space of continuous functions on $[0,1]^k$ endowed with the supremum norm $\| f \| = \sup_{t \in [0,1]^k} |f(t)|$.
Recall the constants $K_1, \beta$ and the function $h$ from equations \eqref{limit_unif_const} and \eqref{limit_t_const}. For $v\in \mathbb T$, let $|v|$ denote the length of the word (the distance between $v$ and the root $\varnothing$). Further, for $Q \in \mathcal Q$, let $|Q|$ denote its area. Set $\mathcal Z_0^v = K_1 h$ for all $v \in \mathbb T$, and, recursively, 
\begin{align} \label{const:m}
\mathcal Z_{n+1}^v(t) = \sum_{r=1}^4 \mathbf 1_{Q_{vr}^{(1)}}(t) A_{vr}^\beta \mathcal Z_n^{vr}(\varphi_{vr}(t)), \quad v \in \mathbb T,  
\end{align}
where $A_{v} = |Q_{v}| / |Q_{\bar v}|$ for $v \in \mathbb T, v \neq \varnothing$. In other words, for all $v \in \mathbb T$, we have
$$A_{v1} = U^v V^v, \quad A_{v2} = U^v(1-V^v), \quad A_{v3} = (1-U^v) V^v, \quad \text{and } A_{v4} = (1-U^v) (1-V^v).$$
By the results stated in Proposition 2, Theorem 5, Proposition 9, Lemma 10 and Proposition 11  in \cite{BrNeSu13}, there exist random continuous functions $\mathcal Z^v, v \in \mathbb T$, such that
\begin{enumerate}
\item [(i)] the random variables $\mathcal Z^v, v \in \mathbb T,$ are identically distributed,
\item [(ii)] $\|\mathcal Z_n^v - \mathcal Z^v \| \to 0$ almost surely and with convergence of all moments,
\item [(iii)] $\Ec{\mathcal Z^v(t)^m} = c_m h(t)^m$ for appropriate constants $c_m > 0$ where $c_1 = K_1$, 
\item [(iv)] $\Ec{\|\mathcal Z^v\|^p} <  \infty$ for all $p > 0$, 
\item [(v)] $\mathcal Z^{v1}, \ldots, \mathcal Z^{v4}, U^v, V^v $ are stochastically independent and, almost surely, for all $t \in [0,1]$,
\begin{align*} \mathcal Z^v(t) = \sum_{r=1}^4 \mathbf 1_{Q_{vr}^{(1)}}(t) A_{vr}^\beta \mathcal Z^{vr}(\varphi_{vr}(t)), \end{align*} 
\item [(vi)] up to a multiplicative constant, $\mathcal Z^v$ is the unique continuous process (in distribution) with $\Ec{\|\mathcal Z^v\|^2} < \infty$ satisfying the stochastic fixed-point equation
\begin{align} \label{fix:lim} \mathcal Z^v \stackrel{d}{=} \left(\sum_{r=1}^4 \mathbf 1_{Q_{r}^{(1)}}(t) A_{r}^\beta    \mathcal Z^{(r)}(\varphi_{r}(t))\right)_{t \in [0,1]}. \end{align} 
Here, $\mathcal Z^{(1)}, \ldots, \mathcal Z^{(4)}$ are copies of $\mathcal Z^v$, and $\mathcal Z^{(1)}, \ldots, \mathcal Z^{(4)}, U, V$ are independent.
\end{enumerate}
\begin{figure}[htbp]
	\centering
    \begin{picture}(200,200)
	\put(-20,0){\includegraphics[width=0.6\textwidth]{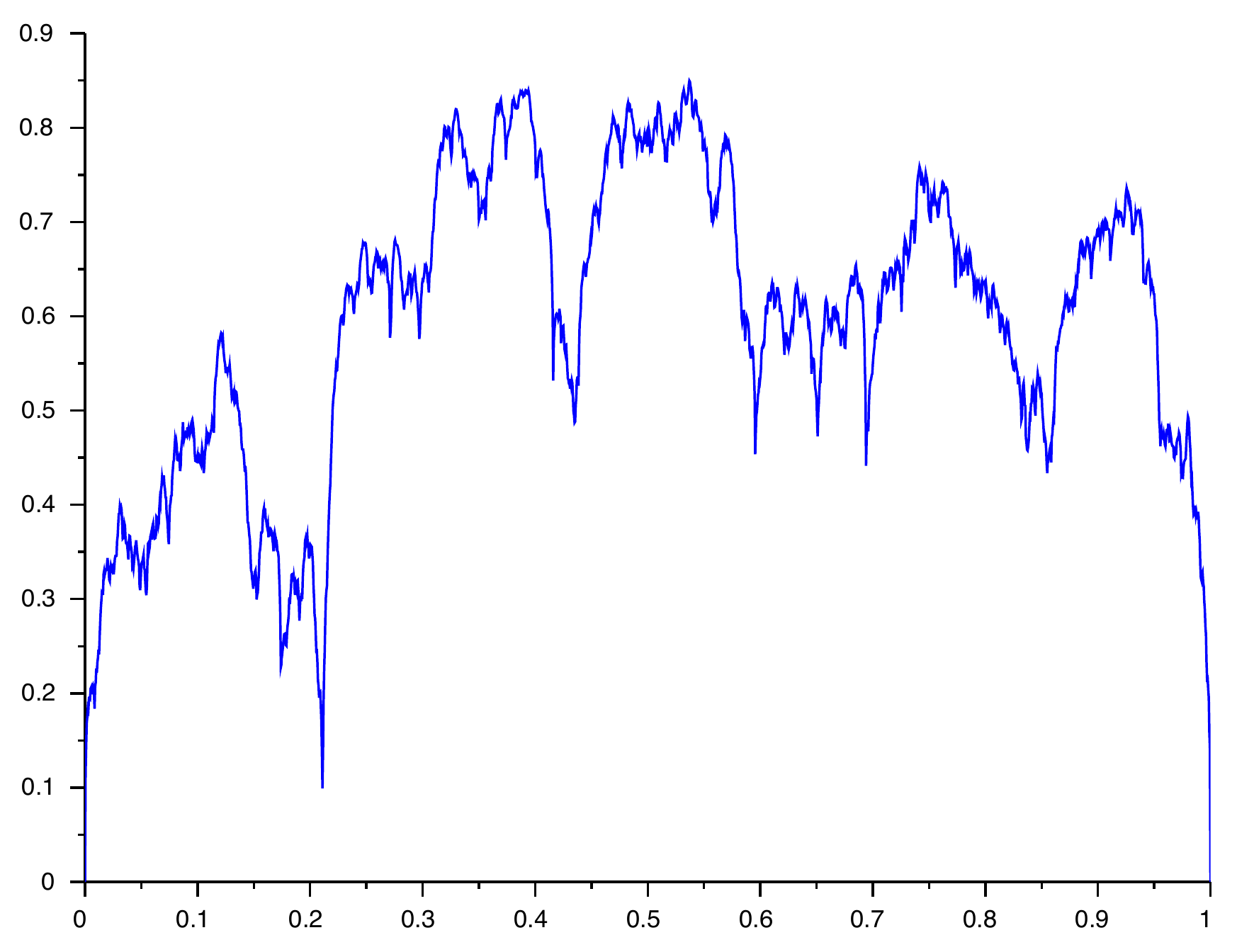}}
    \put(-10,200){${\cal Z}(s)$}
    \put(235,10){$s$}
    \end{picture}
	\caption{A simulation of the limit process ${\cal Z}=({\cal Z}(s))_{s\in [0,1]}$ for the cost 
    of partial match queries in \eqref{conv_st}.}
	\label{fig:Z-process}
\end{figure} 

\medskip 
\noindent\textbf{The complexity of partial match queries.}
The main result in \cite{BrNeSu13} is the limit law \eqref{lim_fun} with limit process $\mathcal Z := \mathcal Z^\varnothing$. The identity \eqref{fix:lim} for the distribution of $\cal Z$ is reminiscent of the following distributional recurrence for the discrete process $C_n$: letting $N_1, \ldots, N_4$ denote the subtree sizes of the quadtree $T_n$ (such that $N_1 + \cdots + N_4 = n-1$), we have
\begin{align} 
\label{rec:pm} 
C_n \stackrel{d}{=} \left( \sum_{r=1}^4 \mathbf 1_{Q_r^{(1)}}(t)  C^{(r)}_{N_r}(\varphi_r(t)) + 1 \right)_{t \in [0,1]}.
\end{align}
Here, the random sequences $(C_n^{(1)})_{n \geq 0}, \ldots, (C^{(4)}_n)_{n \geq 0}$ stemming from the complexities in the four subtrees are independent and identically distributed and also independent from $N_1, \ldots, N_4, U, V$. Further, given $(U, V)$, the vector $(N_1, \ldots, N_4)$ has the multinomial distribution with parameters $(n-1;
A_1, \ldots, A_4)$.
This recurrence is at the heart of the proof of \eqref{lim_fun}, and similar recurrences also play key roles in the present work. 
As already mentioned in the introduction, by \cite[Corollary 1.2]{cucpc}, in probability (and therefore with convergence of all moments by \cite[Theorem 4]{BrNeSu13}), 
\begin{align}\label{conv_st}
\|n^{-\beta} C_n - \mathcal Z \| \xrightarrow[n\to\infty]{} 0.
\end{align}

\section{One-sided partial match queries}
\label{sec:one_sided}
 
For $t \in [0,1]$, let $C_n^{\geq}(t)$ denote the number of nodes with first coordinate at least $t$ visited by a partial match query retrieving all keys with first component $t$ in $T_n$. 
In the partition of the unit square induced by $T_n$, $C_n^\geq(t)$ can be identified as the number of horizontal lines intersecting the vertical line at $t$ stemming from the points lying to the right of $t$. Again, we agree $C_n^\geq(\,\cdot\,)$ to be a right-continuous step function. Set $C_n^<(t) = C_n(t)  - C_n^{\geq}(t)$. By construction, $C_n^\geq$ satisfies a recurrence very similar to \eqref{rec:pm}, namely
\begin{align} \label{rec:onesided} (C_n^\geq(t))_{t \in [0,1]} \stackrel{d}{=} \left( \sum_{r=1}^4 \mathbf 1_{Q_r^{(1)}}(t)  C^{\geq, (r)}_{N_r}(\varphi_r(t))  +  \mathbf 1_{[0,U)}(t) \right)_{t \in [0,1]},\end{align}
with conditions on independence and distributions as in  \eqref{rec:pm}. Since the only difference between \eqref{rec:pm} and \eqref{rec:onesided} concerns the additive term which asymptotically vanishes after rescaling by $n^{\beta}$, one might guess that $C_n^\geq$ admits the same (distributional) scaling limit as $C_n$ modulo a multiplicative constant. In fact, Proposition~\ref{prop:onesidedpm} below shows much more. The following lemma is an important ingredient of the proof.
 
\begin{lem}\label{lem:mean}
There exists $\varepsilon > 0$ such that, uniformly in $t \in [0,1]$ and as $n \to \infty$, 
\begin{align} \label{ex_meone}
\Ec{C^\geq_n(t)} = \frac {K_1} 2 h(t)  n^\beta + O(n^{\beta - \varepsilon}), 
\qquad \text{and}\qquad 
\Ec{C^<_n(t)} = \frac {K_1} 2 h(t)  n^\beta + O(n^{\beta - \varepsilon}). 
\end{align}
Here, $K_1$ and $\beta$ are the constants given in \eqref{limit_unif_const} and \eqref{limit_t_const}.
\end{lem}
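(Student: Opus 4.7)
The strategy is to decompose the mean $\E{C_n^\geq(t)}$ into its symmetric and antisymmetric components about $t=1/2$. Applying the measure-preserving reflection $(x_1,x_2) \mapsto (1-x_1, x_2)$ to the data sequence yields the distributional identity $C_n^\geq(t) \stackrel{d}{=} C_n^<(1-t)$ (up to a set of $t$ of measure zero absorbed by right-continuity), and hence $\E{C_n(t)} = \E{C_n^\geq(t)} + \E{C_n^\geq(1-t)}$. Writing
\begin{align*}
\E{C_n^\geq(t)} = \tfrac12 \E{C_n(t)} + \tfrac12 D_n(t), \qquad D_n(t) := \E{C_n^\geq(t) - C_n^<(t)},
\end{align*}
reduces the lemma to the two uniform estimates $\E{C_n(t)} = K_1 h(t) n^\beta + O(n^{\beta-\varepsilon})$ and $\sup_t |D_n(t)| = O(n^{\beta-\varepsilon})$. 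The first is a standard strengthening of \eqref{limit_t} obtainable via singularity analysis of the generating function $\sum_n \E{C_n(t)} z^n$ as in \cite{ChHw2003, BrNeSu13}: its dominant singular term is $c\, h(t)(1-z)^{-1-\beta}$ and the subdominant exponents are strictly larger than $-1-\beta$, uniformly in $t$.

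For the $D_n$ estimate, I would subtract \eqref{rec:onesided} from the analogous recurrence for $C_n^<$ (with additive term $\mathbf 1_{t > U}$ in place of $\mathbf 1_{t \le U}$) and take expectations to obtain
\begin{align*}
D_n(t) = (1 - 2t) + \sum_{r=1}^{4} \E{\mathbf 1_{Q_r^{(1)}}(t)\, D_{N_r}(\varphi_r(t))}, \qquad D_0 \equiv 0.
\end{align*}
Both $D_n$ and the driving term $1-2t$ are antisymmetric about $t=1/2$. Moreover, the asymptotic integral operator $T[f](t) := \E{\sum_r \mathbf 1_{Q_r^{(1)}}(t) A_r^\beta f(\varphi_r(t))}$ commutes with the reflection $t \mapsto 1-t$, so preserves the antisymmetric subspace; on that subspace its spectral radius is strictly smaller than the top eigenvalue $1$ attained by the positive symmetric eigenfunction $h$ (by a Krein--Rutman style argument: the Perron eigenfunction is positive, hence cannot be antisymmetric). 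This spectral gap is what drives the rate $O(n^{\beta-\varepsilon})$ for $D_n$.

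The main obstacle is to upgrade this qualitative spectral gap into the explicit uniform polynomial rate. I envisage two routes. The first is an induction on $n$ propagating a bound of the form $|D_n(t)| \le c\, n^{\beta-\varepsilon} \psi(t)$ with $\psi$ a suitable antisymmetric majorant of $|1-2t|$ adapted to the endpoints, closed by standard moment estimates on the multinomial vector $(N_1, \ldots, N_4)$ together with the strict contraction of $T$ on the antisymmetric cone. The second, more transparent route is a Mellin-transform analysis of $\sum_n D_n(t) z^n$ in the spirit of \cite{ChHw2003}: the antisymmetry of the input forces the coefficient of $(1-z)^{-1-\beta}$ to vanish, so the leading singularity has exponent $-1-\beta'$ with $\beta' < \beta$, yielding the rate with $\varepsilon = \beta - \beta'$. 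Combining the two estimates delivers both expansions in \eqref{ex_meone}.
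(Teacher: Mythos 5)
Your reduction of the lemma to the single estimate $\sup_t|D_n(t)| = O(n^{\beta-\varepsilon})$ for the antisymmetric part $D_n(t)=\Ec{C_n^\geq(t)-C_n^<(t)}$ is correct and clean: the reflection identity, the recurrence with driving term $1-2t$, and the antisymmetry of $D_n$ all check out, and the spectral gap of the mean operator on the antisymmetric subspace is indeed the right mechanism. But neither of your two routes delivers that estimate, and this is exactly the point at which the paper warns that there is ``no simple/soft argument to deduce \eqref{ex_meone} directly from \eqref{ex_me}'' and that ``it is necessary to repeat all steps in the verification of the latter.'' Route (a) cannot close as stated: a majorant of $|1-2t|$ is nonnegative, hence cannot be antisymmetric, and since the recursion operator is positivity-preserving, a one-step induction through $|D_{N_r}(\varphi_r(t))|\le c\,N_r^{\beta-\varepsilon}\psi(\varphi_r(t))$ only ever sees the operator with weights $A_r^{\beta-\varepsilon}$ acting on the positive cone, whose Perron eigenvalue exceeds $1$ (lowering the exponent below $\beta$ increases every weight). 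The cancellation you need lives in the antisymmetric subspace and is destroyed the moment you pass to absolute values; exploiting it forces you to iterate the operator along the spine while tracking signs and the joint law of the rescaled query position and the accumulated weights --- which is precisely the Curien--Joseph coupling. Route (b) is also unavailable in the form described: for fixed $t$ the sequence $D_n(t)$ satisfies no closed recurrence in $n$ alone (the recursion couples all rescaled arguments $\varphi_r(t)$), so there is no tractable functional equation for $\sum_n D_n(t)z^n$; and the uniform-query average $\Ec{D_n(\xi)}$ vanishes identically by symmetry, so Mellin analysis of the averaged quantity is vacuous. Your Krein--Rutman step also needs more than ``the Perron eigenfunction is not antisymmetric'': one must establish simplicity of the leading eigenvalue and a genuine gap to the rest of the spectrum, in a space where $h$ vanishes at the endpoints.

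The paper's proof goes the other way around: it repeats the coupling argument of \cite{CuJo2010} and \cite{BrNeSu13} for $C_s^\geq$ directly in the Poissonized model, splitting $t$ into boundary and interior regimes, importing all a priori bounds via the monotonicity $C_s^\geq\le C_s$, and extracting the constant $K_1/2$ in the post-coupling regime from the exact identity $\Ec{C_n^\geq(\xi)}=\Ec{C_n(\xi)}/2$ for uniform $\xi$ --- the same symmetry you use, but injected at the one place where it suffices. Salvaging your decomposition would still require running that coupling for $D_n$, so the reduction, while elegant, does not shorten the proof.
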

The proof of Lemma~\ref{lem:mean} is postponed until the end of the section.
 
\begin{proposition} \label{prop:onesidedpm}
In probability and with convergence of all moments, as $n \to \infty$, 
$$\|n^{-\beta} C_n^{\geq} - \mathcal Z /2\| \to 0, 
\qquad \text{and}\qquad 
\|n^{-\beta}  C_n^{<} - \mathcal Z/2 \| \to 0.$$
\end{proposition}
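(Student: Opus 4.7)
The plan is to adapt the functional contraction arguments established for $C_n$ in \cite{BrNeSu13}, exploiting that \eqref{rec:onesided} differs from \eqref{rec:pm} only by the replacement of the constant toll $1$ with the uniformly bounded position-dependent indicator $\mathbf 1_{[0,U)}(t)$. Since $\beta > 0$, this modified toll contributes a term of order $n^{-\beta}$ after rescaling and is therefore negligible: the same contraction operator governs the scaling limit of $C_n^\geq$ as of $C_n$. It then suffices to identify the multiplicative constant via the mean expansion of Lemma~\ref{lem:mean}, and finally to upgrade distributional convergence to convergence in probability.

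First I would establish the distributional limit via contraction. Write $\tilde C_n^\geq(t) = n^{-\beta}(C_n^\geq(t) - \Ec{C_n^\geq(t)})$. Combining \eqref{rec:onesided} with the uniform mean expansion of Lemma~\ref{lem:mean} (with error $O(n^{\beta-\varepsilon})$), the process $\tilde C_n^\geq$ satisfies a distributional recurrence of exactly the form treated in \cite[Proposition 11]{BrNeSu13}, with a random toll whose $\|\cdot\|$-norm on $\Cone$ is $O(n^{-\varepsilon})$. Applying the functional contraction method of \cite{NeSu,BrNeSu13} \emph{verbatim} yields convergence in distribution in $\Cone$ of $\tilde C_n^\geq$ to a centered process $W^\geq$ with $\Ec{\|W^\geq\|^2} < \infty$ solving the centered version of \eqref{fix:lim}. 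Adding back the leading mean term gives $n^{-\beta} C_n^\geq \xrightarrow{d} \mathcal Z^\geq := K_1 h / 2 + W^\geq$, which is continuous, satisfies \eqref{fix:lim}, and has $\Ec{\mathcal Z^\geq} = K_1 h / 2$ and $\Ec{\|\mathcal Z^\geq\|^2} < \infty$. The uniqueness statement~(vi) of Section~\ref{sec:partial_match} then forces $\mathcal Z^\geq = \mathcal Z / 2$ in distribution.

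To upgrade to convergence in probability, I would follow the coupling strategy of Curien \cite{cucpc}. Define $\mathcal Z_n^{\geq, v}$ via the iteration \eqref{const:m} but with initial condition $\mathcal Z_0^{\geq, v} = K_1 h / 2$, driven by the same family $\{(U^v, V^v) : v \in \mathbb T\}$ that underlies $T_n$ and the processes $\mathcal Z^v$. By linearity of \eqref{const:m} and induction on $n$, $\mathcal Z_n^{\geq, \varnothing} = \mathcal Z_n^\varnothing / 2$ almost surely, and hence $\mathcal Z_n^{\geq, \varnothing} \to \mathcal Z / 2$ almost surely in $\Cone$ by property~(ii). Coupling $C_n^\geq$ and $\mathcal Z_k^{\geq, \varnothing}$ through the same realization of $(X_i)$ and unwinding \eqref{rec:onesided} and \eqref{const:m} to a common depth $k = k(n) \to \infty$ sufficiently slowly, an $L^p$-contraction estimate (for some $p \in (1,2]$, using property~(iv) as input) gives $\|n^{-\beta} C_n^\geq - \mathcal Z_{k(n)}^{\geq, \varnothing}\| \to 0$ in $L^p$. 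A triangle inequality then yields $\|n^{-\beta} C_n^\geq - \mathcal Z / 2\| \to 0$ in probability, and convergence of all moments follows from the uniform $L^p$-moment bounds inherent in the contraction framework (cf.\ \cite[Theorem 4]{BrNeSu13}). The second statement $\|n^{-\beta} C_n^< - \mathcal Z / 2\| \to 0$ is then immediate from $C_n^< = C_n - C_n^\geq$ and \eqref{conv_st}.

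The main obstacle is to pin down the scaling constant as exactly $1/2$: the contraction method alone identifies the distributional limit only up to a multiplicative constant by property~(vi), so Lemma~\ref{lem:mean}, with its explicit constant $K_1 / 2$, is essential. A secondary technical point is that the toll $\mathbf 1_{[0,U)}(t)$ is spatially dependent, unlike the constant toll $1$ in \eqref{rec:pm}, so one must check that this position-dependent perturbation still has vanishing contribution in the $\|\cdot\|$-norm used for contraction; this is immediate since it is bounded by $1$ uniformly and rescaled by $n^{-\beta}$ with $\beta > 0$.
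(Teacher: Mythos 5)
Your first step contains an internal inconsistency that matters. For the centered one-sided process $\tilde C_n^\geq$ alone, the toll in the recurrence obtained from \eqref{rec:onesided} is \emph{not} $O(n^{-\varepsilon})$ in $\|\cdot\|$. Writing $\mu_n^\geq=\Ec{C_n^\geq}$ and using Lemma~\ref{lem:mean}, the additive term is
\begin{align*}
n^{-\beta}\Big(\sum_{r=1}^4 \mathbf 1_{Q_r^{(1)}}(t)\,\mu^\geq_{N_r}(\varphi_r(t)) - \mu^\geq_n(t) + \mathbf 1_{[0,U)}(t)\Big)
\;\longrightarrow\; \frac{K_1}{2}\Big(\sum_{r=1}^4 \mathbf 1_{Q_r^{(1)}}(t)A_r^\beta h(\varphi_r(t)) - h(t)\Big),
\end{align*}
a non-degenerate random function: the leading mean terms do not cancel when only one process is centered. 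This non-vanishing toll is precisely what makes the limit of $\tilde C_n^\geq$ non-degenerate. If the toll really were $o(1)$ as you assert, then by \cite[Lemma 18]{NeSu} the limit $W^\geq$ would be the zero process and $\mathcal Z^\geq$ would be the deterministic function $K_1h/2$ --- which does not satisfy \eqref{fix:lim}, contradicting your own next sentence. The step can be repaired by carrying the correct (non-vanishing) toll through the argument of \cite{BrNeSu13}, but then you have only established convergence \emph{in distribution} to $\mathcal Z/2$, and the entire burden of the proposition shifts to your second step: a Curien-type coupling for $C_n^\geq$ that identifies the limit with the \emph{specific} $\mathcal Z$ built from the same $(X_i)_{i\ge1}$. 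That coupling is the hard part of \cite{cucpc} and is only asserted in your proposal, not proved.

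The paper avoids both difficulties with one observation: since $C_n^\geq=\tfrac12\big(C_n+(C_n^\geq-C_n^<)\big)$ and $\|n^{-\beta}C_n-\mathcal Z\|\to0$ in probability by \eqref{conv_st}, it suffices to show $n^{-\beta}\|C_n^\geq-C_n^<\|\to0$. For the centered \emph{difference} $W_n$ the $\tfrac{K_1}{2}h$ contributions of $\mu_n^\geq$ and $\mu_n^<$ cancel, so by Lemma~\ref{lem:mean} the toll genuinely is $O(n^{-\varepsilon})$, the limit equation is homogeneous with the zero process as its unique zero-mean $L^2$ solution, and convergence in distribution to a deterministic (zero) limit upgrades to convergence in probability for free --- no new coupling needed, and the constant $1/2$ comes out automatically. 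Moment convergence then follows from monotonicity, $\max\{C_n^<,C_n^\geq\}\le C_n$, together with the uniform moment bounds of \cite[Theorem 4]{BrNeSu13}. You have all the right ingredients (Lemma~\ref{lem:mean}, the contraction framework, the need to couple to $\mathcal Z$), but the decomposition into $C_n$ plus the difference is the missing idea that turns a substantial coupling argument into a one-line reduction.
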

\begin{proof}
Since $\|n^{-\beta}C_n -{\cal Z}\|\to 0$ by \eqref{conv_st}, it suffices to show that $n^{-\beta} \|C_n^{\geq} - C_n^{<}\|  \to 0$ in probability and with respect to all moments. To this end, write $\mu^\geq_n(t) = \Ec{C_n^{\geq}(t) }$, $\mu^<_n(t) = \Ec{ C_n^<(t) }$ and let 
% \fhen{Replaced $X$ by $W$.}
\[
W_n(t) : = \frac{C_n^{\geq}(t) - \mu^\geq_n(t)}{n^\beta} -  \frac{C_n^{<}(t) -\mu^<_n(t))}{n^\beta}.\] 
Note that $\Ec{W_n(t)} = 0$ for all $t \in [0,1]$.
By construction, the process $W_n$ satisfies the following functional distributional recurrence:
\begin{align} \label{add_term}
W_n & \stackrel{d}{=} 
\Bigg( \sum_{r=1}^4 \mathbf 1_{Q_r^{(1)}}(t)  \left(\frac{N_r}{n} \right)^\beta W^{(r)}_{N_r}(\varphi_r(t)) \\
& \qquad + \frac{
\sum_{r=1}^4 \mathbf 1_{Q_r^{(1)}}(t) \big[\mu^\geq_{N_r}(\varphi_r(t)) - \mu^<_{N_r}(\varphi_r(t))\big]  - \mu^\geq_n(t)+\mu^<_n(t) + \mathbf 1_{[U,1]}(t) - \mathbf 1_{[0,U)}(t) }{n^\beta} \Bigg)_{t \in [0,1]}, \nonumber 
\end{align}
again with assumptions on independence and distributions as in \eqref{rec:pm}. 

By Lemma \ref{lem:mean}, uniformly in $t \in [0,1]$,  the additive term \eqref{add_term} converges to zero almost surely and with respect to all moments. Therefore, and as for every $r\in\{1,2,3,4\}$, $N_r / n \to A_r$ almost surely by the concentration of the binomial distribution, one would expect that, if $W_n$ admits a limit process $W$, then $W$ should satisfy the distributional fixed-point equation obtained by taking limits in the recurrence above:
\begin{align}\label{0proc}
W \stackrel{d}{=} \left(\sum_{r=1}^4 \mathbf 1_{Q_r^{(1)}}(t)  A_r^\beta W^{(r)}(\varphi_r(t))\right)_{t \in [0,1]}.
\end{align}
Here, $W^{(1)}, \ldots, W^{(4)}$ are copies of $W$, and $W^{(1)}, \ldots, W^{(4)}, U, V$ are stochastically independent ($U$ and $V$ appear in the definition of $A_r$, $r=1,\dots, 4$).
As this equation is homogeneous, it is solved by the process which is identical to zero. Furthermore, an application of \cite[Lemma 18]{NeSu} shows that the zero process is the unique solution (in distribution) of \eqref{0proc} among all random processes with zero mean and finite absolute second moment. As the proof of \eqref{lim_fun} in \cite{BrNeSu13}, the rigorous verification that the convergence $\|W_n\| \to 0$ holds in probability makes use of the functional contraction method. The application of \cite[Theorem 22]{NeSu} requires to verify a set of conditions (C1)--(C5) formulated in that paper. By the similarity of the processes $C_n$ and $C_n^\geq$ and their distributional recurrences, conditions (C1), (C2), (C4) and (C5) can be verified exactly in the same way as it was done in \cite{BrNeSu13} for the process $C_n$, and we omit the details. The fact that the zero process is a solution of \eqref{0proc} guarantees  (C3). This shows distributional (or, equivalently, stochastic) convergence. The convergence of moments follows by monotonicity since $\max\{C_n^{<},C_n^\ge\} \le C_n$ and $\sup_{n \geq 1} n^{-\beta p} \Ec{ \|C_n\|^p} <  \infty$ by \cite[Theorem 4]{BrNeSu13}.
\end{proof}

\begin{cor} \label{cor:br}
In probability and with respect to all moments, we have
\[\sup_{0\le s,t \le 1} 
\left|\frac{O_n(s,t,0,1) - n(t-s)}{n^\beta} - \frac{\mathcal Z(t)+\mathcal Z(s)}2 \right| 
\xrightarrow[n\to\infty]{} 0. \]
\end{cor}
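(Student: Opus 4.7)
The plan is to specialize the decomposition of Lemma~\ref{lem:decomp} to the ``vertical strip'' case $(a,b,c,d)=(s,t,0,1)$ and observe that almost every term collapses. With $c=0$ and $d=1$, each of the four corner regions $R_1, R_2, R_3, R_4$ is a rectangle of zero area, so the corresponding fully-specified-search contributions $D_n^{(i)}(s,t,0,1)$ are identically zero. The two horizontal-partial-match differences also vanish: $\bar Y^{\geq}_n(1,\cdot)$ counts points with second coordinate in $(1,1]=\emptyset$ that are visited by the partial match at $y=1$, and $\bar Y^<_n(0,\cdot)$ counts points with second coordinate in $[0,0]$ (a measure-zero set) visited by the partial match at $y=0$, so both differences equal $0$ almost surely. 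Finally, $Y_n^{\geq}(t,0) = Y_n^<(s,0) = 0$ for the same reason, whereas $Y_n^{\geq}(t,1) = C_n^\geq(t)$ and $Y_n^<(s,1) = C_n^<(s)$ directly from the definitions. Lemma~\ref{lem:decomp} therefore reduces to the clean identity
\[O_n(s,t,0,1) = N_n(s,t,0,1) + C_n^\geq(t) + C_n^<(s).\]

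Next I would subtract $n(t-s)$ from both sides, divide by $n^\beta$, and control the three pieces uniformly and jointly in $(s,t)$. For the binomial count, I would invoke the Dvoretzky--Kiefer--Wolfowitz inequality applied to the one-dimensional empirical distribution function $F_n$ of the first coordinates of $X_1, \ldots, X_n$: since
\[\sup_{0\le s\le t\le 1}\bigl|N_n(s,t,0,1) - n(t-s)\bigr| \le 2n\sup_{u \in [0,1]}|F_n(u) - u|,\]
the resulting sub-Gaussian tail bound yields uniform convergence to $0$ both in probability and in every $L^p$ after normalizing by $n^\beta$, because $\beta > 1/2$. For the remaining two summands, Proposition~\ref{prop:onesidedpm} is exactly tailored to the task: it gives $\|n^{-\beta} C_n^\geq - \mathcal Z/2 \| \to 0$ and $\|n^{-\beta}C_n^< - \mathcal Z/2 \| \to 0$ in probability and with convergence of all moments. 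Combining the three estimates by the triangle inequality delivers the stated uniform convergence of $n^{-\beta}(O_n(s,t,0,1) - n(t-s))$ to $(\mathcal Z(t) + \mathcal Z(s))/2$.

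The reason this corollary is short is that the heavy lifting has already been done in Proposition~\ref{prop:onesidedpm}; the only step requiring care is verifying that the vanishing of the ``boundary'' terms in the decomposition is legitimate at the extreme values $c=0$ and $d=1$, which comes down to the fact that the data points lie in the open unit square almost surely. There is no genuine obstacle: no new contraction argument or fixed-point analysis is needed, and the uniformity in $(s,t)$ follows because the three summands depend on $s$ and $t$ through separate scalar arguments, so suprema distribute over them.
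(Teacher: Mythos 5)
Your proof is correct and follows essentially the same route as the paper: reduce $O_n(s,t,0,1)$ to the identity $N_n(s,t,0,1)+C_n^{\geq}(t)+C_n^{<}(s)$, kill the empirical-count fluctuation uniformly via Dvoretzky--Kiefer--Wolfowitz using $\beta>1/2$, and conclude from Proposition~\ref{prop:onesidedpm}. The only difference is cosmetic: you derive the identity explicitly by specializing Lemma~\ref{lem:decomp} and checking that the corner and horizontal terms vanish, whereas the paper simply asserts it (and in fact writes the empirical term with the opposite sign, $N_n(t)-N_n(s)$ for the count of first coordinates at least $t$, an inconsequential slip), so your version is if anything slightly more careful.
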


\begin{proof}
Let $N_n(t)$ denote the total number of points among $X_1,X_2,\dots, X_n$ that have a first coordinate at least $t$. By Donsker's classical theorem for empirical distribution functions, we have
\begin{align}\label{bridge} 
\left(\frac{N_n(t)  -n(1-t)}{\sqrt n}\right)_{t \in [0,1]} \xrightarrow[n\to\infty]{d}B,
\end{align}
where $B$ is a Brownian bridge, that is, $B(t) = W(t) - tW(1), t \in [0,1] $ for a standard Brownian motion $W$. It is well-known that this convergence is also with respect to all moments (this follows, e.g., from the Dvoretzky--Kiefer--Wolfowitz inequality \cite{DvKiWo1956a}).
Thus, since $\beta>1/2$, uniformly for $s,t\in [0,1]$, we have $n^{-\beta} (N_n(t) - N_n(s)) \to 0$ in probability and with respect to all moments. As $O_n(s,t,0,1) = N_n(t) - N_n(s) + C_n^\geq(t) + C_n^<(s)$, the assertion follows from Proposition~\ref{prop:onesidedpm}. 
\end{proof}

Finally, we prove Lemma~\ref{lem:mean} that was instrumental in the proof of Proposition~\ref{prop:onesidedpm}.

\begin{proof}[Proof of Lemma \ref{lem:mean}]
The most technical ingredient in the proof of \eqref{lim_fun} that appears in \cite{BrNeSu13} is the following strengthening of \eqref{limit_t}: there exists $\varepsilon > 0$ such that, uniformly in $t \in [0,1]$, and as $n \to \infty$
\begin{align}\label{ex_me} 
\Ec{C_n(t)} = K_1 h(t)  n^\beta + O(n^{\beta - \varepsilon}). 
\end{align}
This result heavily relies on the methods developed by Curien and Joseph in \cite{CuJo2010}. Note that the fact that the first point $X_1$ falls on one side of the line $\{x=t\}$ induces an asymmetry between $C_n^{<}(t)$ and $C_n^{\ge}(t)$ for $n$ fixed, and the means $\Ec{C_n^{<}(t)}$ and $\Ec{C_n^{\ge }(t)}$ are different (unless $t=1/2$). Somewhat as a consequence of this inherent asymmetry, we have no simple/soft argument to deduce \eqref{ex_meone} directly from \eqref{ex_me}, and it is necessary to repeat all steps in the verification of the latter. To work out all details would go beyond the scope of this note, and we confine ourselves to the  discussion of the main steps: first of all, as in \cite[Section 5]{BrNeSu13}, one considers a Poissonized model in which the points $X_1, X_2, \ldots$ are inserted following the arrival times of a homogeneous Poisson process on $[0, \infty)$. Consequently, we deal with a family of increasing quadtrees $(T_s)_{s \ge 0}$ built on the points arrived before time $s$ (there are a Poisson$(s)$ number of such points), and the corresponding partial match query complexities $(C_s(t))_{s\ge 0}$ and $(C_s^\geq(t))_{s\ge 0}$. Standard de-poissonization arguments based on the concentration of the Poisson distribution imply that it is sufficient to show the corresponding statements for the continuous-time process, namely there exists $\varepsilon > 0$ such that
$$\sup_{0\le t \le 1} \left| s^{-\beta} \Ec{C_s^\geq(t)} - \frac{K_1}2 h(t)\right| = O(s^{-\varepsilon}), \quad s\to \infty.$$
Following \cite{BrNeSu13}, one distinguishes between the behavior at the boundary ($t \in [0, \delta] \cup [1-\delta, 1]$ for small $\delta>0$) and away from the boundary ($t \in [\delta, 1-\delta]$). In \cite{BrNeSu13}, it was enough to consider $t \leq 1/2$ by symmetry, but here, this is not the case. Lemmas 14 and 15 in \cite{BrNeSu13} contain the corresponding bounds for the process $C_s$, and we now argue that these bounds apply with the \emph{same} involved constants to the one-sided quantity $C_s^\geq$.

Concerning the behavior at the boundary, we have the trivial bound
\begin{align*}
\sup_{t \in [0, \delta] \cup [1-\delta,1]} \left |s^{-\beta}\Ec{C_s^\geq(t)} - \frac{K_1} 2 h(t)  \right|
& \leq \sup_{t \in [0, \delta] \cup [1-\delta,1]} s^{-\beta}\Ec{C_s^\geq(t)} + \sup_{t \in [0, \delta] \cup [1-\delta,1]} K_1 h(t) \\
& \leq  \sup_{t \in [0, \delta]} s^{-\beta}\Ec{C_s(t)} + \sup_{t \in [0, \delta]} \frac{K_1} 2 h(t)\,, \end{align*}
which brings us back to the situation of the two-sided problem and explains why the bound in \cite[Lemma 14]{BrNeSu13} also applies in our case. The main part of the proof of \eqref{ex_me} is contained in \cite[Lemma 14]{BrNeSu13} and relies on a coupling argument between $\varphi_v(t)$ for the unique node $v=v_1v_2\ldots v_k, k \geq 1,$ with $v_i \in \{1,3\}$ for all $i = 1, \ldots, k$ and $t \in Q_v$ and a uniformly distributed random variable $\xi$. We do not explain this step in detail but mention that one distinguishes two cases: first, in case 1 (coupling has not yet happened), similarly to the last display, one uses the crucial uniform upper bound in \cite[Lemma 2]{CuJo2010}:
$$\sup_{s \geq 0} \sup_{t \in [0,1]} s^{-\beta} \Ec{C_s(t)} < \infty.$$
By monotonicity, this bound can also be applied to $C_s^\geq$. In case 2 (coupling has happened), the main ingredient in the proof is the expansion in \eqref{limit_unif_const}. (Here, the second order term is important.) Since for a uniform random variable $\xi$ independent of $(X_i)_{i\ge 1}$, we have $\Ec{C_n^\geq(\xi)} = \Ec{C_n^<(\xi)} =  \E{C_n(\xi)}/ 2$, the same arguments apply in our case. 
\end{proof}

\section{Constrained partial match queries}
\label{sec:constrained}

\subsection{Preliminary considerations}

For $t,s \in [0,1]$, let $Y_n(t,s) = O_n(t,t,0,s)$ be the number of nodes with second coordinate at most $s$ visited by the partial match query retrieving the points with first coordinate equal to $t$. By our convention, $Y_n(t,s)$ is right continuous in both coordinates. Note that $Y_n(t,1) = C_n(t)$ for $t \in [0,1]$. To prove a functional limit theorem for $Y_n$, we use a variant of the functional contraction method which makes explicit use of our encoding. A very similar approach was taken in \cite{BrSu15} when studying the dual tree of a partitioning of the disk by sequential insertions of non-crossing random chords. As a result, we provide a proof of convergence of $n^{-\beta}Y_n$ that avoids another application of the complex machinery developed in \cite{NeSu}. 

Recalling the time-transformations \eqref{phi1} and \eqref{phi2}, we have the  following distributional recursive equation on the level of random fields with parameter space $[0,1]^2$:
\begin{equation} \label{recy}
\begin{aligned}
 \big(Y_n(t,s)\big)_{t,s\in [0,1]} & \stackrel{d}{=} \Big( \mathbf 1_{Q_1} (t,s) Y^{(1)}_{N_1}(\varphi_1(t), \varphi'_1(s)) \\
 & \qquad + \mathbf 1_{Q_2} (t,s) \left[ Y^{(1)}_{N_1}(\varphi_1(t), 1) + Y^{(2)}_{N_2}(\varphi_2(t), \varphi'_2(s)) \right] + \mathbf 1_{[V,1]}(s) \\
& \qquad + \mathbf 1_{Q_3} (t,s) Y^{(3)}_{N_3}(\varphi_3(t), \varphi'_3(s)) \\
&\qquad + \mathbf 1_{Q_4} (t,s) \left[ Y^{(3)}_{N_3}(\varphi_3(t), 1) + Y^{(4)}_{N_4}(\varphi_4(t), \varphi'_4(s)) \right] \Big)_{t,s \in [0,1]}.
\end{aligned}
\end{equation}
Here, we have the same conditions on independence and distributions as in \eqref{rec:pm}.
Thus, if $n^{-\beta} Y_n$ converges, we expect the distribution of the limit $Y$ to satisfy the following fixed-point equation
\begin{equation*}
\begin{aligned}
\big(Y(t,s)\big)_{t,s\in [0,1]} & \stackrel{d}{=} 
\Big( \mathbf 1_{Q_1} (t,s) A_1^\beta Y^{(1)}(\varphi_1(t), \varphi'_1(s)) \\
&\qquad + \mathbf 1_{Q_2} (t,s) \left[ A_1^\beta  Y^{(1)}(\varphi_1(t), 1) + A_2^\beta Y^{(2)}(\varphi_2(t), \varphi'_2(s)) \right]  \\
&\qquad+ \mathbf 1_{Q_3} (t,s) A_3^\beta Y^{(3)}(\varphi_3(t), \varphi'_3(s)) \\
&\qquad+ \mathbf 1_{Q_4} (t,s) \left[A_3^\beta  Y^{(3)}(\varphi_3(t), 1) + A_4^\beta  Y^{(4)}(\varphi_4(t), \varphi'_4(s)) \right] \Big)_{t,s \in [0,1]},
\end{aligned}
\end{equation*}
where $Y^{(1)}, \ldots, Y^{(4)}$ are copies of $Y$, and the random variables $Y^{(1)}, \ldots, Y^{(4)}, U, V$ are independent. 
Since $Y_n(t,1) = C_n(t)$, and this process is well understood, the crucial observation is that, for any fixed $(t,s) \in [0,1]^2$, only \emph{one} of the processes $(Y^{(1)}_n), \ldots, (Y_n^{(4)})$  contributes to the recursive decomposition \eqref{recy} at a point whose second coordinate differs from one. The same can be said about the associated stochastic fixed-point equation. It is this fact why we do not need to engage the methodology of \cite{NeSu} to show convergence. (We do however need some ideas of this work to characterize the distribution of $Y$. See Proposition \ref{prop:idY} below.)

\subsection{Construction of the limit process and convergence}

We proceed as in the construction of the process $\mathcal Z$ described in Section~\ref{sec:partial_match}. To simplify the notation, let us introduce the following operators: for $v \in \mathbb T$, and for a function $f:[0,1]^2\to \mathbb R$, define
\begin{equation}\label{opB}
\begin{aligned}
B^v_1(f)(t,s) & = A_{v1}^\beta \left[ \mathbf 1_{Q_{v1}} (t,s) f(\varphi_{v1}(t), \varphi'_{v1}(s)) + \mathbf 1_{Q_{v2}} (t,s) f(\varphi_{v1}(t), 1)\right], \\
B^v_2(f)(t,s) & =  A_{v2}^\beta \mathbf 1_{Q_{v2}} (t,s) f(\varphi_{v2}(t), \varphi'_{v2}(s)), \\
B^v_3(f)(t,s) & = A_{v3}^\beta \left[ \mathbf 1_{Q_{v3}} (t,s)    f(\varphi_{v3}(t), \varphi'_{v3}(s)) + \mathbf 1_{Q_{v4}} (t,s) f(\varphi_{v3}(t), 1)\right], \\
B^v_4(f)(t,s) & =  A_{v4}^\beta \mathbf 1_{Q_{v4}} (t,s) f(\varphi_{v4}(t), \varphi'_{v4}(s)). 
\end{aligned}
\end{equation}
For all $v \in \mathbb T$, let $\mathcal Y_0^v(t,s) = K_1 h(t)$ for all $t,s\in [0,1]$. Then, recursively, set
\begin{align}\label{def:y} 
\mathcal  Y_{n+1}^v(t,s) = \sum_{r=1}^4 B^v_r(\mathcal Y_{n}^{vr})(t,s), \qquad v \in \mathbb T.
\end{align}
This definition extends the construction of $\mathcal Z^v_n$ in \eqref{const:m} since we have $\mathcal Y_n^v(t,1) = \mathcal Z_n^v(t)$ for $t \in [0,1]$. We first verify that this indeed allows to construct a family of processes $({\cal Y}^v)_{v\in \mathbb T}$ that have the required properties:

\begin{proposition} \label{prop:limity}
There exist random continuous $\Ctwo$-valued fields $\mathcal Y^v, v \in \mathbb T$, such that
\begin{compactenum}[(i)]
\item the random variables $\mathcal Y^v, v \in \mathbb T,$ are identically distributed,
\item $\|\mathcal Y_n^v - \mathcal Y^v\| \to 0$ almost surely and with convergence of all moments,
\item $\mathcal Y^v(t,1) = \mathcal Z^v(t)$ for all $t \in [0,1]$,
\item $\Ec{ \|\mathcal Y^v\|^p} <  \infty$ for all $p > 0$, and
\item $\mathcal Y^{v1}, \ldots, \mathcal Y^{v4}, U^v, V^v $ are stochastically independent and, almost surely, for all $t,s \in [0,1]$,
\begin{align} \label{fix:lim2} \mathcal Y^v(t,s) = \sum_{r=1}^4 B^v_r(\mathcal Y^{vr})(t,s),
\end{align} 
where the operators $B_r^v$, $r\in \{1,2,3,4\}$, $v\in \mathbb T$, are defined in \eqref{opB}.
\end{compactenum}
\end{proposition}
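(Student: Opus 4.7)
The construction mirrors that of the process $\mathcal Z^v$ in Section \ref{sec:partial_match}, but the plan is to exploit the \emph{single recursion} structure pointed out in Section \ref{sec:constrained}: at any fixed $(t,s)$ with $s<1$, the recursion \eqref{def:y} invokes at most one of the four sub-processes $\mathcal Y^{vr}_n$ at a point whose second coordinate is strictly less than one; a possible second contribution (only when $(t,s) \in Q_{v2}$ or $Q_{v4}$) is evaluated at $s=1$ and reduces, by the base case, to a copy of $\mathcal Z^{vr}_n$. This is what allows me to bypass the contraction machinery of \cite{NeSu} and to build $\mathcal Y^v$ as an absolutely convergent series of $\mathcal Z$-quantities along a random path.

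Two preliminary observations set the stage. A direct induction on $n$ shows that $\mathcal Y^v_n(\,\cdot\,, 1) = \mathcal Z^v_n$, since $B^v_r(f)(\,\cdot\,, 1)$ matches the $r$-th summand of \eqref{const:m} applied to $f(\,\cdot\,, 1)$; this will deliver (iii) once the limit is in hand. A parallel induction shows that $\mathcal Y^v_n \geq 0$ is non-decreasing in $s$, so $\|\mathcal Y^v_n\| \leq \|\mathcal Z^v_n\|$, yielding uniform $L^p$-bounds on $\mathcal Y^v_n$ via item (iv) of the $\mathcal Z$-construction, which takes care of (iv). Next, I fix $(t,s) \in [0,1)^2$ and define a random path $w_0 = v$, $w_k = w_{k-1} r_k$, where $r_k \in \{1,2,3,4\}$ is the unique index with $(\varphi_{w_{k-1}}(t), \varphi'_{w_{k-1}}(s)) \in Q_{w_{k-1} r_k}$. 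Iterating \eqref{def:y} $k$ times yields
\[
\mathcal Y^v_n(t,s) = \sum_{j=1}^{k} \Sigma^{(n)}_j(t) + \Bigl( \tfrac{|Q_{w_k}|}{|Q_v|} \Bigr)^{\!\beta} \, \mathcal Y^{w_k}_{n-k}\!\bigl(\varphi_{w_k}(t), \varphi'_{w_k}(s)\bigr),
\]
where, when $r_j \in \{2,4\}$, $\Sigma^{(n)}_j(t)$ is a rescaled copy of $\mathcal Z^{w_{j-1} 1}_{n-j}$ or $\mathcal Z^{w_{j-1} 3}_{n-j}$ (with prefactor $(|Q_{w_{j-1} r}|/|Q_v|)^\beta$), and $\Sigma^{(n)}_j \equiv 0$ otherwise. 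The relative area $|Q_{w_k}|/|Q_v|$ is a product of $k$ independent factors bounded by $\max(U,1-U)\max(V,1-V)$, so its $\beta$-th power decays geometrically almost surely and in every $L^p$; combined with the uniform $L^p$-bound on $\|\mathcal Y^{w_k}_{n-k}\|$ this drives the remainder to $0$ uniformly in $n$. Letting first $n \to \infty$ for fixed $k$ (using $\|\mathcal Z^w_n - \mathcal Z^w\| \to 0$ from \cite{BrNeSu13}) and then $k \to \infty$, I obtain that $\mathcal Y^v_n(t,s)$ converges almost surely and in every $L^p$ to the summable series $\mathcal Y^v(t,s) := \sum_{j \geq 1} \Sigma^{(\infty)}_j(t)$. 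Passing to the limit in \eqref{def:y} then yields (i) and the fixed-point equation (v).

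The hard part will be upgrading this pointwise convergence to the uniform convergence claimed in (ii) and simultaneously establishing the continuity of $\mathcal Y^v$. The iterates $\mathcal Y^v_n$ are in fact \emph{not} continuous along the horizontal lines $s = x^w_2$, $w \in T_n$; the associated jumps only dissolve in the limit thanks to two matching identities, namely $\mathcal Z^w(0) = \mathcal Z^w(1) = 0$ (inherited from $h(0) = h(1) = 0$ and preserved by \eqref{const:m}) and $\mathcal Y^w(\,\cdot\,, 0) = 0$, which I would establish separately by noting that $(\mathcal Y^v_n(\,\cdot\,, 0))_{n \geq 0}$ obeys a south-only sub-recursion whose one-step mean norm contracts by the factor $2/(\beta+1)^2 < 1$, forcing its limit to vanish. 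With these matching conditions secured, my strategy is to split $[0,1]^2$ along the depth-$k$ partition $\{Q_w : |w| = k\}$, observe that the truncated series $\sum_{j \leq k} \Sigma^{(\infty)}_j$ defines a function continuous on all of $[0,1]^2$, and bound the supremum of the residual over each $Q_w$ by $(|Q_w|/|Q_v|)^\beta \|\mathcal Y^w\|$. Taking the maximum over $|w| = k$ and appealing to the same geometric-decay estimate used for $\mathcal Z$ in \cite{BrNeSu13} gives $L^p$-convergence of $\|\mathcal Y^v_n - \mathcal Y^v\|$ to zero, and almost-sure uniform convergence follows along sub-sequences via Borel--Cantelli. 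Convergence of all moments is then automatic from the uniform $L^p$-bound on $\|\mathcal Y^v_n\|$.
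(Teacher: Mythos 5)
Your construction is sound in its main line, but it takes a genuinely different route from the paper. The paper works directly with the successive iterates: it expands $[\mathcal Y_{n+1}^v-\mathcal Y_n^v]^2$ using the fact that at most two summands of \eqref{def:y} are active at any $(t,s)$ (one of them being a $\mathcal Z$-type term), applies Cauchy--Schwarz, and feeds in the known exponential bound $\Ec{\|\mathcal Z_n^v-\mathcal Z_{n-1}^v\|^2}\le C^2q^n$ to obtain $\Delta_n\le\gamma\Delta_{n-1}+4Cq^{n/2}\sqrt{\Delta_{n-1}}$ with $\gamma=4/(2\beta+1)^2<1$ as in \eqref{gamma}; geometric decay of $\Delta_n$ then makes $(\mathcal Y_n^v)$ Cauchy in sup-norm, and moments are handled by induction on $p$. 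You instead unroll the recursion along the random path of cells containing a fixed $(t,s)$ and exhibit $\mathcal Y^v$ as an explicit, absolutely convergent series of rescaled $\mathcal Z$-processes attached to the off-path siblings, controlling the remainder by the $\beta$-th power of the relative cell area. Both arguments rest on the same two structural facts (only one subprocess is active at a point with second coordinate below one, and $\gamma<1$), but yours buys an explicit formula for the limit and makes the self-similarity \eqref{fix:lim2} immediate, at the price of a more delicate passage from pointwise to uniform convergence; the paper's version is shorter but leaves the continuity of the limit to the argument of \cite{BrSu15}, since the iterates $\mathcal Y_n^v$ are themselves discontinuous. Your preliminary reductions (that $\mathcal Y_n^v(\cdot,1)=\mathcal Z_n^v$, and that non-negativity plus monotonicity in $s$ give $\|\mathcal Y_n^v\|\le\|\mathcal Z_n^v\|$ and hence the uniform $L^p$ bounds) are correct and are a nice shortcut to item (iv).

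One concrete misstep in your sketch of the hard step: the truncated series $\sum_{j\le k}\Sigma_j^{(\infty)}$ is \emph{not} continuous on $[0,1]^2$. Crossing the line $s=V^{w_{j-1}}$ upward switches $r_j$ from $1$ to $2$, so $\Sigma_j^{(\infty)}$ jumps from $0$ to $A_{w_{j-1}1}^\beta\,\mathcal Z^{w_{j-1}1}(\varphi_{w_{j-1}1}(t))$; it is only the \emph{full} series that is continuous there, because the residual simultaneously jumps down by the same amount (its limit from below is $A_{w_{j-1}1}^\beta\,\mathcal Y^{w_{j-1}1}(\cdot,1)=A_{w_{j-1}1}^\beta\,\mathcal Z^{w_{j-1}1}(\cdot)$, while from above it starts at $\mathcal Y^{w_{j-1}2}(\cdot,0)=0$). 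So you cannot argue ``continuous truncation plus small residual''; you must instead bound the oscillation of $\mathcal Y^v$ on each closed depth-$k$ cell (where the truncation \emph{is} continuous) by the oscillation of the truncation plus $2\max_{|w|=k}(|Q_w|/|Q_v|)^\beta\|\mathcal Y^w\|$, and then use your matching identities --- $\mathcal Z^w(0)=\mathcal Z^w(1)=0$, $\mathcal Y^w(\cdot,0)=0$, and (via monotonicity and non-negativity) $\mathcal Y^w(0,\cdot)=\mathcal Y^w(1,\cdot)=0$ --- to handle points on the cell boundaries. The geometric-decay estimate you invoke, $\Ec{\max_{|w|=k}|Q_w|^{2\beta}\|\mathcal Y^w\|^2}\le\sum_{|w|=k}\Ec{|Q_w|^{2\beta}}\,\Ec{\|\mathcal Y^w\|^2}=\gamma^k\,\Ec{\|\mathcal Y\|^2}$, does close the argument, so the gap is repairable, but as written the continuity claim for the truncation is false.
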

\begin{proof}
By the definition in \eqref{def:y} of the family of process $({\cal Y}^v_n)_{n\ge 0}$, $v\in \mathbb T$, we have 
\begin{align*} 
& [\mathcal Y_{n+1}^v(t,s) - \mathcal Y_{n}^v(t,s)]^2 \\
& = \sum_{r=1}^4 \mathbf 1_{Q_{vr}} (t,s) A_{vr}^{2\beta} [\mathcal Y_{n}^{vr}(\varphi_{vr}(t),\varphi_{vr}'(s)) - \mathcal Y_{n-1}^{vr}(\varphi_{vr}(t),\varphi_{vr}'(s))]^2 \\
& + \mathbf 1_{Q_{v2}}(t,s) A_{v1}^{2\beta} [\mathcal Y_{n}^{v1}(\varphi_{v1}(t),1) - \mathcal Y_{n-1}^{v1}(\varphi_{v1}(t),1)]^2 \\ 
& +  \mathbf 1_{Q_{v4}}(t,s) A_{v3}^{2\beta} [\mathcal Y_{n}^{v3}(\varphi_{v3}(t),1) - \mathcal Y_{n-1}^{v3}(\varphi_{v3}(t),1)]^2 \\
& + 2 \mathbf 1_{Q_{v2}}(t,s) A_{v1}^\beta A_{v2}^\beta [\mathcal Y_{n}^{v1}(\varphi_1(t),1) - \mathcal Y_{n-1}^{vr}(\varphi_1(t),1)]\cdot [\mathcal Y_{n}^{v2}(\varphi_2(t),\varphi_2'(s)) - \mathcal Y_{n-1}^{v2}(\varphi_2(t),\varphi_2'(s))] \\
& +  2 \mathbf 1_{Q_{v4}}(t,s) A_{v3}^\beta A_{v4}^\beta [\mathcal Y_{n}^{v3}(\varphi_3(t),1) - \mathcal Y_{n-1}^{v3}(\varphi_3(t),1)]\cdot [\mathcal Y_{n}^{v4}(\varphi_4(t),\varphi_4'(s)) - \mathcal Y_{n-1}^{v4}(\varphi_4(t),\varphi_4'(s))].
\end{align*} 
Let $\Delta_n = \Ec{\|\mathcal Y_{n+1}^v - \mathcal Y_{n}^v \|^2}$ for $n \geq 0$. First taking the expectation of the supremum over $(t,s) \in [0,1]^2$ on either side and then applying the Cauchy--Schwarz inequality on the right hand side shows that, for $n \geq 1$,
$$\Delta_n \leq \Delta_{n-1} \sum_{r=1}^4 \Ec{A_{vr}^{2\beta}} + 4 \sqrt{\Delta_{n-1}\cdot \mathbb E\bigg[\sup_{t \in [0,1]} |\mathcal Y_{n}^v(t,1) - \mathcal Y_{n-1}^v(t,1)|^2\bigg]}.$$
Now, by \cite[Proposition 9]{BrNeSu13}, there exist constants $C > 0$ and $q \in (0,1)$ such that, 
for every $n\ge 1$, 
\begin{align} \label{exp:b} 
\mathbb E\bigg[\sup_{t \in [0,1]} |\mathcal Y_{n}^v(t,1) - \mathcal Y_{n-1}^v(t,1)|^2\bigg]  = \Ec{\|\mathcal Z_n^v - \mathcal Z_{n-1}^v \|^2}\leq C^2 q^n\,.
\end{align}  
As a consequence, setting 
\begin{align}\label{gamma} 
\gamma := \sum_{r=1}^4 \Ec{A_{r}^{2\beta}} = \frac 4 {(2\beta+1)^2}< 1, 
\end{align} we obtain 
$$\Delta_n \leq \gamma \Delta_{n-1} + 4 C q^{n/2} \sqrt{\Delta_{n-1}}.$$
By induction on $n\ge 0$ it follows that $\Delta_n = O(r^n)$ for all $\sqrt q < r < 1$. (This argument is worked out in the proof of
\cite[Lemma 2.3]{BrSu15}.) Uniform almost sure convergence follows straightforwardly from the completeness of $\Ctwo$. (These details are explained in the proof of \cite[Theorem 1.2] {BrSu15}.) Convergence of the $p$th moment follows by induction on $p$ along the same lines. (Here, one uses that an exponential bound of the form \eqref{exp:b} is valid for any higher moment.) Alternatively, one can prove boundedness of the sequence $\Ec{ \| Y_n \|^p},  n \geq 1$ by induction on $p \geq 2$ using \eqref{def:y} by induction on $n \geq 0$.
\end{proof}
\begin{figure}[tb]
    \centering
    \includegraphics[width=0.45\textwidth]{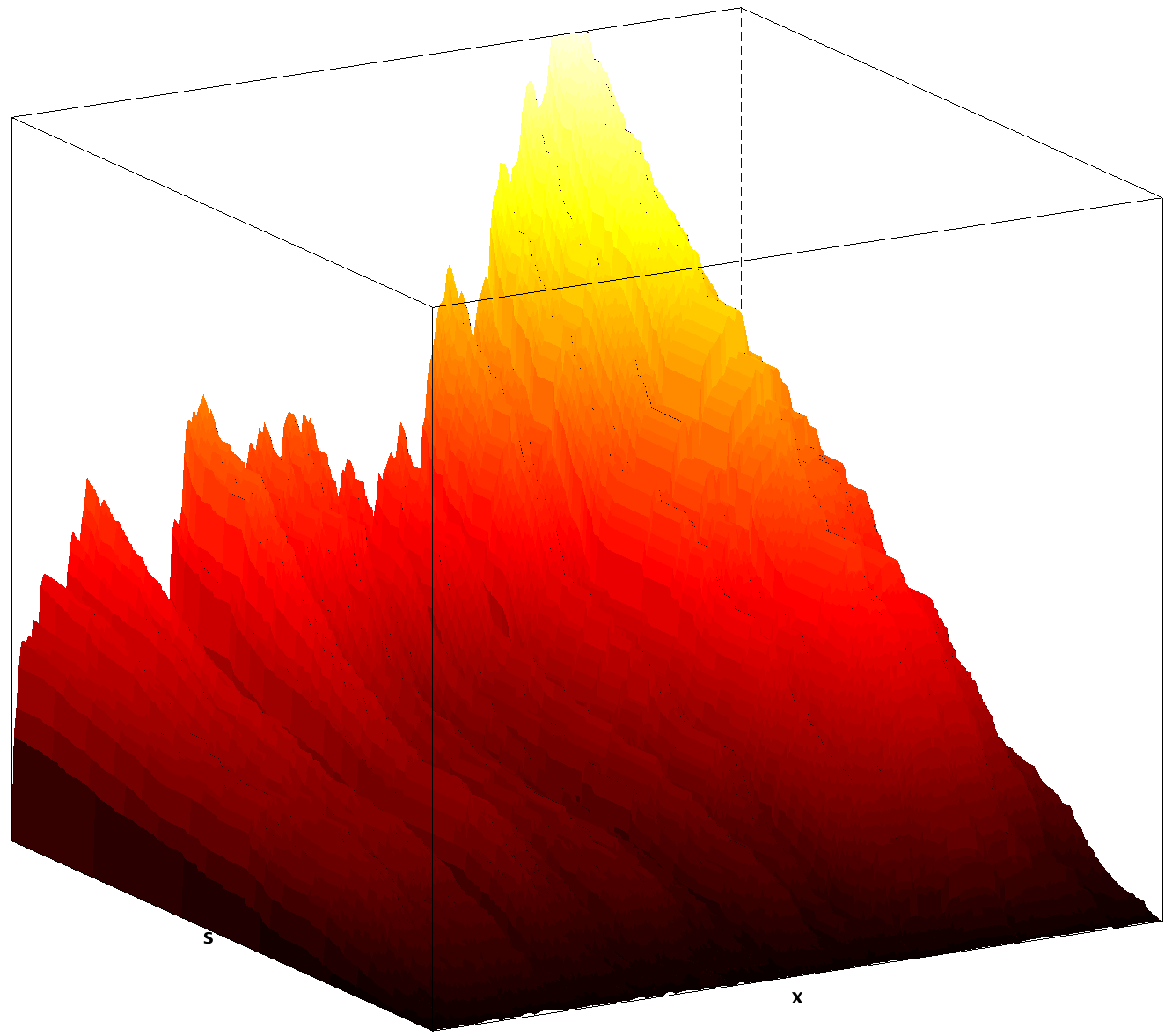}\hfil
    \includegraphics[width=0.45\textwidth]{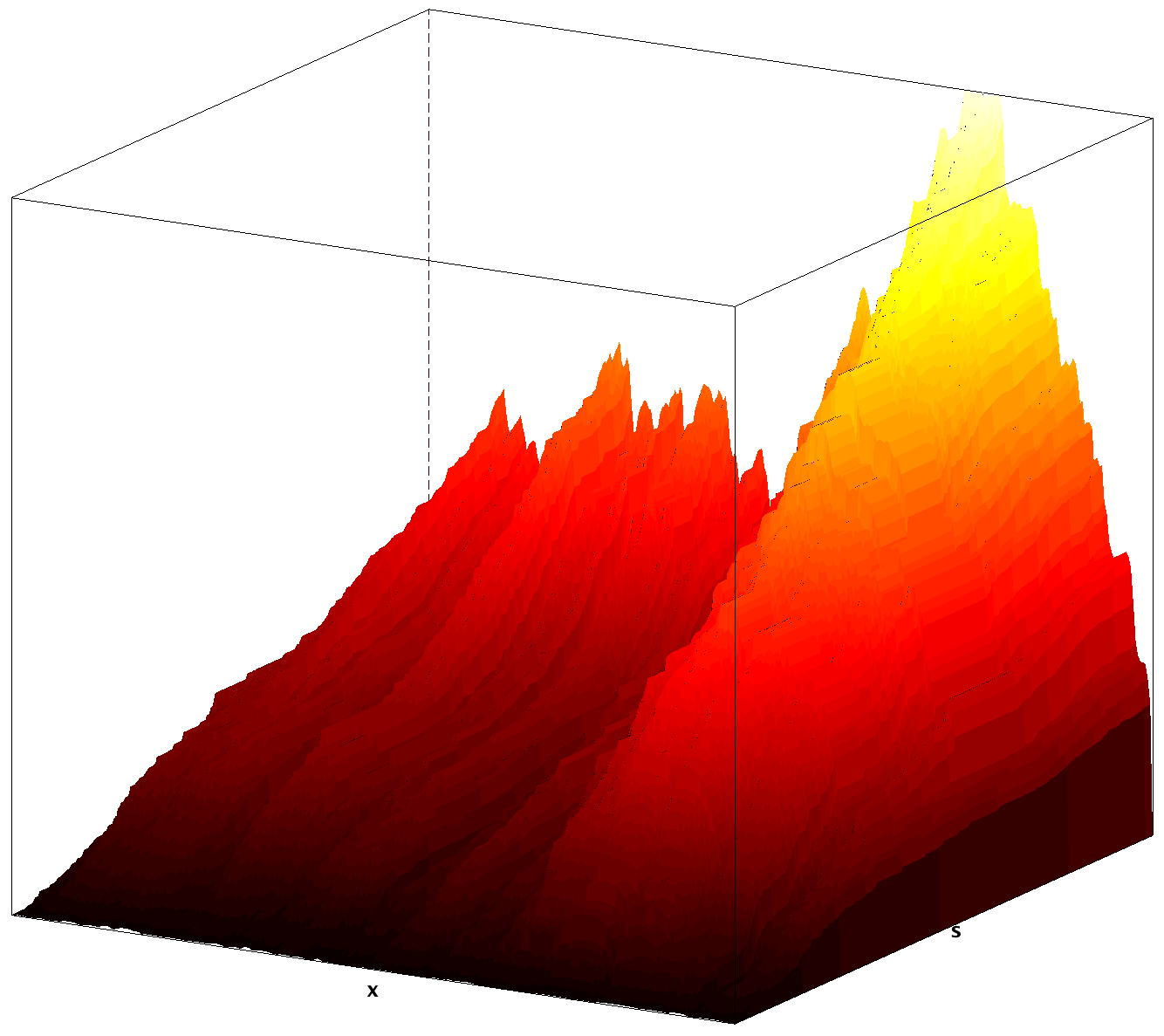}   
    \includegraphics[width=0.45\textwidth]{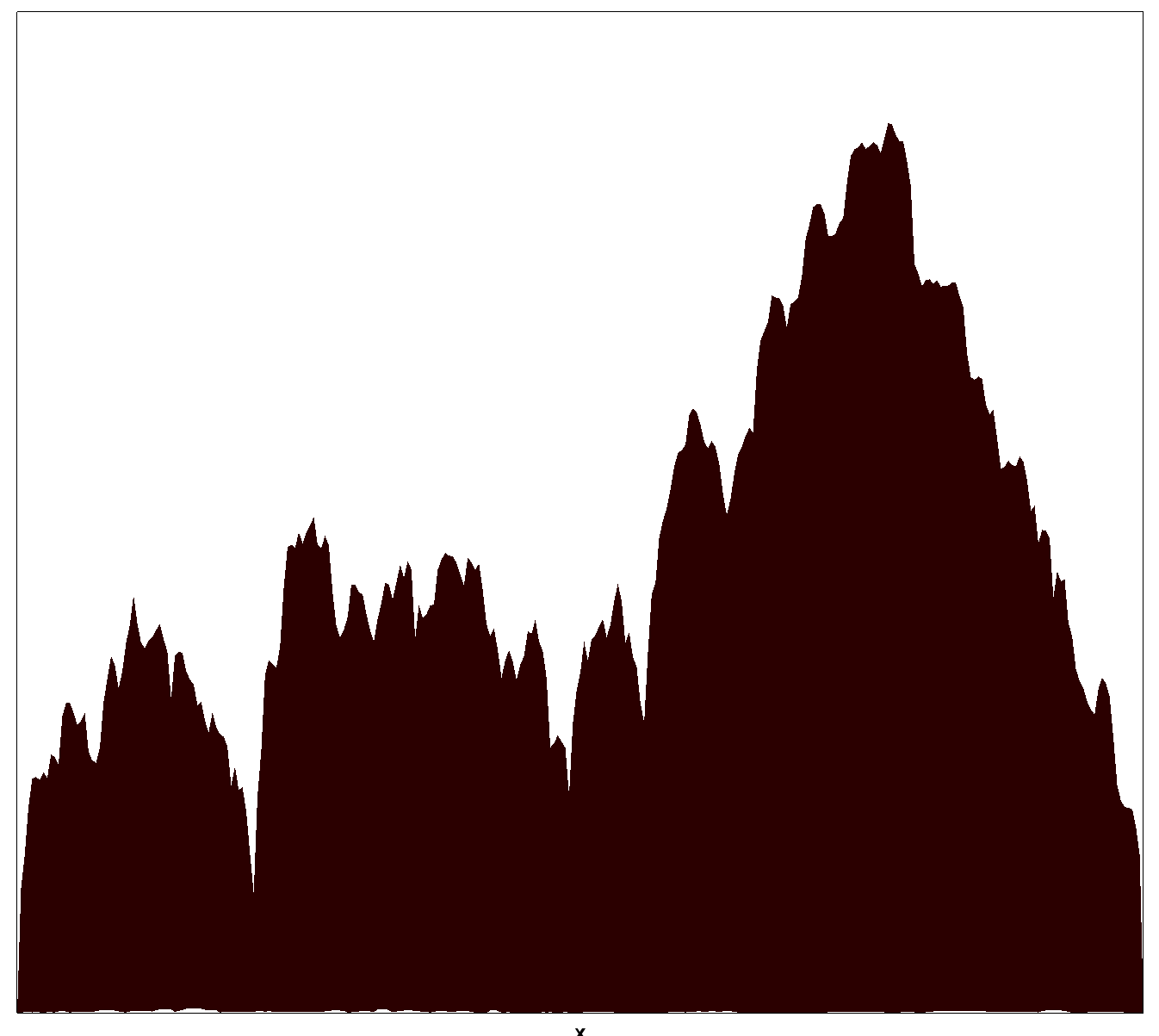} 
    \caption{A simulation of the process $\cal Y$: on the left, the process $\cal Y$; on the right, ${\cal Y}(\cdot,1)$ which is distributed like $\cal Z$.}
    \label{fig:label} 
\end{figure}
  
Recall the definitions of the fields $Y_n^\geq$ and $Y_n^<$ given in Section~\ref{sec:strategy}. Note that $ Y_n^\geq + Y_n^< = Y_n$.

\begin{proposition} \label{prop:convY} Let $\mathcal Y := \mathcal Y^\varnothing$. In probability and with convergence of all moments, 
$$\|n^{-\beta} Y_n - \mathcal Y\| \xrightarrow[n\to\infty]{} 0.$$
In the same sense,
$$\|n^{-\beta} Y^\geq_n - \mathcal Y/2\| \xrightarrow[n\to\infty]{} 0, 
\qquad \text{and}\qquad 
\|n^{-\beta} Y^<_n - \mathcal Y/2\| \xrightarrow[n\to\infty]{} 0.$$
\end{proposition}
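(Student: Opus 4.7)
The plan is to couple the discrete and limit processes and then exploit the recurrences \eqref{recy} and \eqref{fix:lim2} via a direct contraction argument, mimicking the construction of $\mathcal Y$ in Proposition \ref{prop:limity} and thereby avoiding the abstract machinery of \cite{NeSu}. The geometric key is that, at any fixed $(t,s) \in [0,1]^2$, the recursion engages only one subtree in a truly two-dimensional manner: if $(t,s)\in Q_r$ with $r\in\{1,3\}$, only subtree $r$ contributes, whereas if $(t,s)\in Q_r$ with $r\in\{2,4\}$, the sibling subtree $r-1$ enters only through the fully-evaluated partial match $Y_{N_{r-1}}^{(r-1)}(\cdot,1)=C_{N_{r-1}}^{(r-1)}(\cdot)$, whose rescaled version is controlled by \eqref{conv_st}.

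Adopting the coupling of Proposition \ref{prop:limity} turns \eqref{recy} and \eqref{fix:lim2} into almost sure identities. Writing $\Delta_n^{(r)} = \|N_r^{-\beta} Y_{N_r}^{(r)} - \mathcal Y^{(r)}\|$ and subtracting pointwise, I obtain
\begin{align*}
\|n^{-\beta} Y_n - \mathcal Y\| \leq \max_{1 \leq r \leq 4} (N_r/n)^\beta \Delta_n^{(r)} + S_n,
\end{align*}
where $S_n$ collects (i) the concentration errors $|(N_r/n)^\beta - A_r^\beta|\cdot(\|\mathcal Y^{(r)}\|\vee\|\mathcal Z^{(r)}\|)$, (ii) the partial-match residuals $(N_r/n)^\beta\|N_r^{-\beta}C_{N_r}^{(r)}-\mathcal Z^{(r)}\|$ for $r\in\{1,3\}$, and (iii) the additive term $n^{-\beta}\mathbf 1_{[V,1]}(s)$. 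By \eqref{conv_st}, the $L^p$-concentration of $N_r/n$ around $A_r$, and Proposition \ref{prop:limity}(iv), $\E{\|S_n\|^p}\to 0$ for every $p\geq 1$. Setting $f(n) = \E{\|n^{-\beta}Y_n - \mathcal Y\|^2}$, applying $(a+b)^2\leq(1+\varepsilon)a^2+(1+1/\varepsilon)b^2$, bounding $(\max_r x_r)^2\leq\sum_r x_r^2$, and conditioning on $(U,V,N_1,\ldots,N_4)$ together with the independence of the subtree processes produce
\begin{align*}
f(n)\leq(1+\varepsilon)\sum_{r=1}^4\E{(N_r/n)^{2\beta}f(N_r)} + (1+1/\varepsilon)\E{\|S_n\|^2}.
\end{align*}

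Since $\sum_r\E{(N_r/n)^{2\beta}}\to\gamma=4/(2\beta+1)^2<1$ from \eqref{gamma}, and $\sup_n f(n)<\infty$ via the pointwise bound $Y_n\leq C_n$, the uniform $L^2$-estimate for $n^{-\beta}C_n$ from \cite[Theorem 4]{BrNeSu13}, and Proposition \ref{prop:limity}(iv), I pick $\varepsilon$ with $(1+\varepsilon)\gamma<1$ and split each $\E{(N_r/n)^{2\beta}f(N_r)}$ according to $\{N_r\leq L\}$ versus $\{N_r>L\}$: on the first set $(N_r/n)^{2\beta}$ vanishes as $n\to\infty$ for fixed $L$, while on the second $f(N_r)\leq\sup_{m>L}f(m)$ propagates forward. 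Sending $n\to\infty$ then $L\to\infty$ yields $\limsup_n f(n)\leq (1+\varepsilon)\gamma\limsup_n f(n)$, forcing $f(n)\to 0$; convergence of all moments follows from uniform $L^p$-boundedness of $\|n^{-\beta}Y_n\|+\|\mathcal Y\|$ by uniform integrability.

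For the statements on $Y_n^\geq$ and $Y_n^<$, apply the same scheme to $W_n = n^{-\beta}(Y_n^\geq - Y_n^<)$: subtracting the recurrences for $Y_n^\geq$ and $Y_n^<$ gives a decomposition of the same shape, with an additive discrepancy bounded by $n^{-\beta}$ and with sibling partial-match contributions $(N_r/n)^\beta N_r^{-\beta}(C_{N_r}^{\geq,(r)} - C_{N_r}^{<,(r)})$ for $r\in\{1,3\}$, which vanish uniformly by Proposition \ref{prop:onesidedpm}. The same $\gamma$-contraction then gives $\E{\|W_n\|^2}\to 0$, upgraded to all $L^p$ via $|Y_n^\geq - Y_n^<|\leq C_n$; combined with $Y_n = Y_n^\geq + Y_n^<$ and the first convergence, both $n^{-\beta}Y_n^\geq$ and $n^{-\beta}Y_n^<$ converge to $\mathcal Y/2$ in the required sense. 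The main obstacle will be the careful verification of the uniform pointwise inequality for $\|n^{-\beta}Y_n-\mathcal Y\|$, since the regions $Q_r$ depend on the random root $(U,V)$ and the boundary lines $\{t=U\}$ and $\{s=V\}$ must be handled via the right-continuity convention on $Y_n$, as well as the uniform $L^2$-boundedness of $f(n)$ that feeds the contraction argument.
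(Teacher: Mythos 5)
Your proposal is correct and follows essentially the same route as the paper: couple the discrete and limit processes via the subtree construction, exploit that at each $(t,s)$ only one subtree contributes a genuinely two-dimensional term while the sibling enters only through the already-controlled partial-match process $C_{N_r}^{(r)}$, derive a recursive inequality for $\E{\|n^{-\beta}Y_n-\mathcal Y\|^2}$ with contraction factor $\gamma=4/(2\beta+1)^2<1$, and conclude by the standard boundedness-plus-limsup argument before upgrading to all moments and treating $Y_n^{\ge}-Y_n^{<}$ analogously via Proposition~\ref{prop:onesidedpm}. The only (harmless) cosmetic differences are your use of a $\max_r$ bound with a $(1+\varepsilon)$ split where the paper expands the square exactly and applies Cauchy--Schwarz to the cross terms, and your direct domination $Y_n\le C_n$ for the a priori $L^2$-bound where the paper invokes an induction.
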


\begin{proof}
For $v \in \mathbb T$, let $(a^v_n)_{n \geq 1}$ be the (maximal) increasing sequence of indices with such that $X_{a^v_n} \in Q_v$. Then if $X_n=(x^1_n, x^2_n)$, $n\ge 1$, we set, for each $n\ge 1$,
$X^v_n = (\varphi_v(x^1_{a^v_n}), \varphi'_v(x^2_{a^v_n}))$. This sequence plays the same role in the construction of the subtree rooted at $v$ as the sequence $(X_i)_{i\ge 1}$ in the entire tree (the subtree rooted at $\varnothing$). In other words, if we define $C_n^v(t)$ and $Y_n^v(t)$  analogously to $C_n(t)$ and $Y_n(t)$ but relying on this sequence, we obtain 
\begin{align}\label{conv_mom} 
\Ec{\|n^{-\beta} C_n^v(t) - \mathcal Z^v(t)\|^p} \to 0 
\end{align}
for all $p > 0$. For $r=1, \ldots, 4, $ set $A_{r,n} = N_r /n$ and $\tilde Y_n^r = Y_n^r / n^\beta$. 
By construction, almost surely, for all $t,s \in [0,1]$, 
\begin{align*} 
& \left(\frac{Y_{n}(t,s)}{n^{\beta}} - \mathcal Y(t,s)\right)^2 \\
& = \sum_{r=1}^4 \mathbf 1_{Q_{r}} (t,s) \left(A_{r,n}^\beta \tilde Y_{N_r}^{r}(\varphi_{r}(t),\varphi_{r}'(s)) - A_r^\beta \mathcal Y^{r}(\varphi_{r}(t),\varphi_{r}'(s))\right)^2 \\
& + \mathbf 1_{Q_{2}}(t,s) \left(A_{1,n}^\beta \tilde Y_{N_1}^{1}(\varphi_{1}(t),1) - A_1^\beta \mathcal Y^{1}(\varphi_{1}(t),1)\right)^2 \\ 
& +  \mathbf 1_{Q_{4}}(t,s) \left(A_{3,n}^\beta \tilde Y_{N_3}^{3}(\varphi_{3}(t),1) - A_3^\beta \mathcal Y^{3}(\varphi_{3}(t),1)\right)^2 \\ 
& + 2 \mathbf 1_{Q_{2}}(t,s) \left(A_{1,n}^\beta \tilde Y_{N_1}^{1}(\varphi_{1}(t),1) - A_1^\beta \mathcal Y^{1}(\varphi_{1}(t),1)\right) \left(A_{2,n}^\beta \tilde Y_{N_2}^{2}(\varphi_{2}(t),\varphi_{2}'(s)) - A_2^\beta \mathcal Y^{2}(\varphi_{2}(t),\varphi_{2}'(s))\right)\\
& +  2 \mathbf 1_{Q_{4}}(t,s) \left( A_{3,n}^\beta \tilde Y_{N_3}^{3}(\varphi_{3}(t),1) - A_3^\beta \mathcal Y^{3}(\varphi_{3}(t),1) \right) \left(A_{4,n}^\beta \tilde Y_{N_4}^{4}(\varphi_{4}(t),\varphi_{4}'(s)) - A_4^\beta \mathcal Y^{4}(\varphi_{4}(t),\varphi_{4}'(s))\right).
\end{align*} 
Thus, as in the proof of Proposition \ref{prop:limity}, we obtain
\begin{align*} 
\mathbb E \left[\left(\frac{Y_{n}(t,s)}{n^{\beta}} - \mathcal Y(t,s)\right)^2\right] & \leq 4 \Ec{ \|A_{1,n}^\beta \tilde Y_{N_1}^{1} - A_1^\beta \mathcal Y^{1}\|^2}\\
& + 4 \sqrt{\mathbb E \bigg[ \sup_{t \in [0,1]} |A_{1,n}^\beta \tilde Y_{N_1}^{1}(t,1) - A_1^\beta \mathcal Y^{1}(t,1)|^2\bigg] \cdot \Ec{ \|A_{2,n}^\beta \tilde Y_{N_2}^{2}
 - A_2^\beta \mathcal Y^{2}\|^2}}.
\end{align*}
Next, note that, again using the Cauchy-Schwarz inequality,
\begin{align*} 
& \Ec{ \|A_{1,n}^\beta \tilde Y_{N_1}^{1} - A_1^\beta \mathcal Y^{1}\|^2} \\
& = \Ec{ \|( A_{1,n}^\beta \tilde Y_{N_1}^{1} - A_{1,n}^\beta \mathcal Y^{1}  + A_{1,n}^\beta \mathcal Y^{1} - A_1^\beta \mathcal Y^{1})^2 \|} \\
& \leq \Ec{A_{1,n}^{2\beta} \| \tilde Y_{N_1}^1 - \mathcal Y^1 \|^2} + 2 \sqrt{\Ec{\| \tilde Y_{N_1}^1 - \mathcal Y^1 \|^2} \cdot \E{\|\mathcal Y\|^2} \Ec{(A_{1,n}^\beta-A_1^\beta)^2} } + \E{\|\mathcal Y\|^2} \Ec{(A_{1,n}^\beta-A_1^\beta)^2}.
\end{align*}
Let  $\gamma_n := \Ec{\|n^{-\beta} Y_{n} - \mathcal Y \|^2}$. As $A_{1,n} \to A_1$ almost surely by the concentration of the binomial distribution 
and $\|\tilde Y_n^1 - \mathcal Y^1\| \to 0$ in probability by \eqref{conv_mom} and both convergences are with respect to all moments, 
it follows from the last two displays that
$$\gamma_n \leq 4 \Ec{\gamma_{N_1} A_{1,n}^{2\beta}} + o(1)(1 + \E{\gamma_{N_1}})$$
%...ity that we used in the proof of Proposition~\ref{prop:limity} (but this time, for conditional expectations), we obtain 
%$$\gamma_n \leq \E{\sum_{r=1}^4 \gamma_{N_r} \cdot \left(\frac{N_r}{n}\right)^{2\beta}} + 
%4 \E{\sqrt{\gamma_{N_2}\cdot \mathbb E \bigg[\sup_{t \in [0,1]} |C^1_{N_1}(t) - \mathcal Z^1(t)|^2 \bigg| N_1\bigg]}}
%+ 8 n^{-\beta}(1 + \E{\gamma_{N_1}}). $$
From here, since $A_{1,n} \to A_1$ and $\gamma = 4\E{A_1^{2\beta}} < 1$ (with $\gamma$ given in \eqref{gamma}), an easy induction on $n$ shows that $(\gamma_n)_{n\geq 1}$ is a bounded sequence. %Therefore, the last display and the convergence of {\hen $\| n^{-\beta} C_n^1- {\cal Z}^1\|$} to zero in \eqref{conv_st} imply that
%$$\gamma_n \leq 4 \E{\gamma_{N_1} \cdot \left(\frac{N_1}{n}\right)^{2\beta}} + o(1).$$
The verification that $\gamma_n \to 0$ is now standard in the framework of the contraction method. As in the previous proposition, since the convergence \eqref{conv_mom} also holds for higher moments, by induction over $p$, one verifies the convergence of the $p$th moment. 
Analogously, relying on Proposition \ref{prop:onesidedpm}, one then shows that
$n^{-\beta p } \Ec{ \|Y_n^\geq - Y_n^< \|^p} \to 0$ for all $p > 0$, thereby concluding the proof.
\end{proof}

\subsection{Properties of the limit process $\cal Y$}

\begin{proposition} \label{prop:meanY}
For every $t,s\in [0,1]$, we have $\Ec{\mathcal Y(t,s)} = K_1 h(t) g(s)$ where $g$ is the unique bounded and measurable function on $[0,1]$ satisfying 
\eqref{fix:g}. The function $g$ is continuous and monotonically increasing. 
\end{proposition}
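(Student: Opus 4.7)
The plan is to realize $g$ as the limit of a deterministic sequence obtained by taking expectations at each stage of the iterative construction \eqref{def:y} of $\mathcal Y$. Write $T$ for the operator on bounded measurable functions on $[0,1]$ defined by the right-hand side of \eqref{fix:g}.

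By induction on $n \geq 0$, I will show that $\Ec{\mathcal Y_n^v(t,s)} = K_1 h(t) g_n(s)$ (independent of $v$), where $g_0 \equiv 1$ and $g_{n+1} = T(g_n)$. The base case is the definition $\mathcal Y_0^v \equiv K_1 h$. For the inductive step, condition on $(U^v,V^v)$ in \eqref{def:y}: since each $\mathcal Y_n^{vr}$ is independent of $(U^v,V^v)$ and has mean function $(x,y) \mapsto K_1 h(x) g_n(y)$ by hypothesis, $\Ec{B_r^v(\mathcal Y_n^{vr})(t,s)}$ reduces to a double integral over $[0,1]^2$. Summing over $r = 1, \ldots, 4$, these four contributions combine and factor as a $u$-integral depending only on $t$ times a $v$-integral depending only on $s$. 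The $u$-integral $\int_t^1 u^\beta h(t/u)\, du + \int_0^t (1-u)^\beta h((t-u)/(1-u))\, du$ equals $\tfrac{\beta+1}{2} h(t)$ by the integral equation for $h$ underlying \eqref{limit_t}. Once this factor $\tfrac{\beta+1}{2}$ is absorbed, the $v$-integral is precisely $T(g_n)(s)$. Along the way one also uses $g_n(1) = 1$ (an easy induction), to handle the boundary contributions $\mathcal Y_n^{vr}(\,\cdot\,,1) = \mathcal Z_n^{vr}$ appearing in $B_1^v$ and $B_3^v$.

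Next, $T$ is an affine contraction on the Banach space of bounded measurable functions on $[0,1]$ under the supremum norm. Indeed, for bounded $f_1, f_2$,
\[
\bigl|T(f_1)(s) - T(f_2)(s)\bigr| \leq \|f_1 - f_2\|_\infty \cdot \tfrac{1}{2}\bigl[2 - s^{\beta+1} - (1-s)^{\beta+1}\bigr],
\]
and the right-hand coefficient attains its maximum $1 - 2^{-\beta-1} < 1$ at $s = 1/2$. The Banach fixed point theorem then delivers a unique bounded measurable fixed point $g$ of $T$, together with geometric convergence $g_n \to g$ in supremum norm. Combined with the moment convergence $\Ec{\mathcal Y_n(t,s)} \to \Ec{\mathcal Y(t,s)}$ granted by Proposition \ref{prop:limity}(ii), this forces $\Ec{\mathcal Y(t,s)} = K_1 h(t) g(s)$.

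Continuity of $g$ is inherited from the uniform convergence $g_n \to g$ together with the continuity of each $g_n$ (by induction, using dominated convergence in the integrands of $T$). Monotonicity is best read off the probabilistic interpretation: for every $n$, the map $s \mapsto Y_n(t,s)$ is non-decreasing since it counts points with second coordinate at most $s$; Proposition \ref{prop:convY} transfers this to $s \mapsto \mathcal Y(t,s)$ almost surely, and taking expectations and dividing by $K_1 h(t) > 0$ (for $t \in (0,1)$) yields the monotonicity of $g$. The main obstacle is purely bookkeeping in the inductive step: one must verify that all four contributions $B_r^v$ factor in the correct way, and that the $g_n(1) = 1$ boundary condition propagates so that the $\mathcal Z$-type terms in $B_1^v, B_3^v$ merge cleanly into the $v$-integral defining $T(g_n)$.
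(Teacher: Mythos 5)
Your proof is correct. It rests on the same two computational pillars as the paper's argument --- the factorization of the recursive mean into a $t$-factor times an $s$-factor, the identity $\int_t^1 u^\beta h(t/u)\,du + \int_0^t(1-u)^\beta h((t-u)/(1-u))\,du = \tfrac{\beta+1}{2}h(t)$ coming from the fixed-point equation for $\Ec{\mathcal Z}$, and the contraction constant $1-2^{-\beta-1}$ for the one-dimensional operator --- but the logical route is genuinely different. The paper takes expectations in the \emph{limit} identity \eqref{fix:lim2} to obtain $\nu=G(\nu)$ for the two-variable operator $G$ of \eqref{fixnu}, and must then prove that $G$ has at most one bounded measurable fixed point on $[0,1]^2$ (via Jensen's inequality and the constant $\gamma=4/(2\beta+1)^2<1$ from \eqref{gamma}) before it can conclude that $\nu=K_1 h\otimes g$. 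You instead take expectations along the \emph{approximating} recursion \eqref{def:y}, showing $\Ec{\mathcal Y_n^v}=K_1 h\otimes T^n(\mathbf 1)$ by induction, and identify the limit through Proposition~\ref{prop:limity}(ii) together with the Banach iteration $T^n(\mathbf 1)\to g$. This entirely bypasses the two-dimensional uniqueness step (claim (i) of the paper's proof), at the cost of a little extra bookkeeping in the induction (the propagation of $g_n(1)=1$, which you correctly flag and which does hold since $T$ fixes the value $1$ at $s=1$). Your conditioning step is legitimate because $\mathcal Y_n^{vr}$ is measurable with respect to $\{(U^w,V^w)\}$ for $w$ strictly below $vr$, hence independent of $(U^v,V^v)$. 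The continuity and monotonicity arguments match the paper's in substance; note only that the paper's uniqueness claim (i) is also what lets it assert $\Ec{\mathcal Y(t,s)}=K_1h(t)g(s)$ before deducing monotonicity, a role played in your write-up by Proposition~\ref{prop:limity}(ii), so nothing is missing.
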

\begin{proof}
Write $\nu(t,s) = \Ec{\mathcal Y(t,s)}$. Taking the expectation in \eqref{fix:lim2} and using the fact that $\nu(t,1) = K_1 h(t)$ yields $\nu = G(\nu)$ where $G$ is the functional operator given by
\begin{equation} \label{fixnu}
\begin{aligned}
G(f)(t,s) 
& = \int_{t}^1 \!\!\!\!\int_{s}^1 (uv)^\beta f\!\left( \frac t u, \frac s v\right) dvdu 
+ \int_0^t \!\!\!\int_{0}^s ((1-u)(1-v))^\beta f \!\left( \frac {t-u}{ 1-u}, \frac {s-v} {1-v}\right) dv du \\
& \quad + \int_t^1 \!\!\!\!\int_{0}^s (u(1-v))^\beta f \!\left( \frac t u, \frac {s-v} {1-v}\right) dv du+\int_0^t \!\!\!\int_{s}^1 ((1-u)v)^\beta f \!\left( \frac {t-u}{1-u}, \frac {s} {v}\right) dv du\\
 & \quad + K_1 \int_{t}^1 \!\!\!\!\int_{0}^s (u(1-v))^\beta h \!\left( \frac t u \right) dv du + K_1 \int_0^t \!\!\!\int_{0}^s ((1-u)v)^\beta h \!\left( \frac {t-u}{ 1-u}\right) dv du.\\
\end{aligned}
\end{equation}
It should be clear from the structure of the terms on the right-hand side that the summands factorize if $f$ is proportional to $h(t) y(s)$ for a bounded, measurable function $y$. More precisely, if $h\otimes y$ denotes the function on $[0,1]^2$ such that $h\otimes y (t,s)=h(t)y(s)$, we have
\begin{align*}
G(K_1 h \otimes y)(t,s) = K_1 \Bigg(\int_{t}^1 u^\beta h & \!\left( \frac t u \right) du +  \int_{0}^t (1-u)^\beta h\!\left( \frac {t-u} {1-u} \right) du \Bigg) \\
& \times  \left( \int_s^1 v^\beta y \!\left(\frac s v \right) dv + \int_0^s (1-v)^\beta y \!\left(\frac {s-v} {1-v} \right) dv + \int_0^s v^\beta dv  \right).
\end{align*}
As $\Ec{\mathcal Z(t)} = K_1 h(t)$, the fixed-point equation \eqref{fix:lim} implies that the first factor in the last display equals $K_1 (\beta+1) h(t) / 2$. (The details of the calculation are worked out in  \cite[Lemma 8]{BrNeSu13}.) Hence, if we choose $y=g$ with $g$ as in the statement of the proposition, then the last display equals $K_1 h(t) g(s)$. To conclude the proof if suffices to show that:
\begin{enumerate}
\item [(i)] there exists at most one fixed-point of $G$ in the set of bounded measurable functions  on $[0,1]^2$, 
\item [(ii)] there exists a unique bounded measurable function $g$ satisfying \eqref{fix:g}, and
\item [(iii)] the function $g$ in (ii) is continuous and increasing.
\end{enumerate}
The first two claims follow from standard contraction arguments. 
We start with (ii). Let  $G^*$ be the operator 
$$G^*(y)(s) =  \frac{\beta + 1}{2} \int_s^1 v^\beta y \left(\frac s v \right) dv + \int_0^s (1-v)^\beta y \left(\frac {s-v} {1-v} \right) dv + \frac{s^{\beta+1}}2.$$
Let $g_1, g_2$ be bounded measurable functions. Observing that the map $s \mapsto \int_s^1 v^\beta  dv + \int_0^s (1-v)^\beta  dv$ considered on $[0,1]$ attains its maximum which has value $(2 - 2^{-\beta}) / (\beta + 1)$ at $s=1/2$, we obtain
\begin{align*}
\|G^*(g_1) - G^*(g_2)\|  
& = \frac{\beta + 1}{2} \cdot \sup_{0\le s\le 1} \left| \int_s^1 v^\beta (g_1-g_2) \left(\frac s v \right) dv + \int_0^s (1-v)^\beta (g_1-g_2) \left(\frac {s-v} {1-v} \right) dv \right | \\
& \leq \frac{\beta + 1}{2}  \|g_1 - g_2 \| \cdot \sup_{0\le s\le 1} \left| \int_s^1 v^\beta dv + \int_0^s (1-v)^\beta  dv \right| \\
& \leq (1 - 2^{-\beta - 1}) \|g_1 - g_2 \|.
\end{align*}
As $1 - 2^{-\beta - 1} < 1$ and the space of bounded measurable functions on $[0,1]$ is complete with respect to the supremum norm, it follows from Banach's fixed-point theorem (see, e.g.\ \cite[Chapter 1]{kirk}) that there exists a unique bounded measurable solution of \eqref{fix:g}. Continuity of this solution follows easily from the theorem of dominated convergence. Monotonicity follows once we have verified (i) since, for any $n \geq 1$, the process $Y_n(t,s)$ (and hence its mean) is increasing in $s$. We move on to (i). 
Let $f_1, f_2$ be bounded and measurable functions on $[0,1]^2$. Then, we have
\begin{align*}
\big|G(f_1)(t,s)- G(f_2)(t,s)\big|^2  & = \left | \E{ \sum_{r=1}^4 \mathbf 1_{Q_r}(t,s) A_r^\beta \cdot (f_1 - f_2)(\varphi_r(t), \varphi_r'(s)) } \right |^2 \\
& \leq \left | \E{ \sum_{r=1}^4 \mathbf 1_{Q_r}(t,s) A_r^\beta \cdot \big|(f_1 - f_2)(\varphi_r(t), \varphi_r'(s))\big|  } \right |^2 \\
 & \leq  \E{ \sum_{r=1}^4 \mathbf 1_{Q_r}(t,s) A_r^{2\beta} \cdot \big|(f_1 - f_2)(\varphi_r(t), \varphi_r'(s))\big|^2 } \\
& \leq \gamma \|f_1-f_2\|^2, 
\end{align*}
where we used Jensen's inequality in the third step, and $\gamma$ is the constant defined in \eqref{gamma}. As $\gamma < 1$, taking the supremum over $t,s$ on the left-hand side shows that $G$ has at most one fixed-point. This proves (i). 
\end{proof}

To conclude the section, we show that the distribution of $\mathcal Y$ is characterized by the identity \eqref{fix:lim2}.
\begin{proposition} \label{prop:idY}
Up to a multiplicative constant, the process $\mathcal Y$ is the unique continuous field (in distribution) with $\Ec{\|\mathcal Y\|^2} < \infty$ satisfying the stochastic fixed-point equation
\begin{align}\label{fix:Y1} 
\mathcal Y \stackrel{d}{=}  \sum_{r=1}^4 B_r(\mathcal Y^{(r)}), 
\end{align}
with operators $B_1, \ldots, B_4$ defined in \eqref{opB}, where $\mathcal Y^{(1)}, \ldots, \mathcal Y^{(4)}, U, V$ are independent and $\mathcal Y^{(1)}, \ldots, \mathcal Y^{(4)}$ are distributed like $\mathcal Y$.
\end{proposition}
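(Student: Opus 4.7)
My plan is to combine a reduction to the boundary $s=1$, where the known uniqueness of $\mathcal Z$ applies, with a contraction argument in a boundary-constrained $L^2$-Wasserstein-type distance. The key preliminary observation is that if $\widetilde{\mathcal Y}$ is any $\Ctwo$-valued solution of \eqref{fix:Y1} with $\Ec{\|\widetilde{\mathcal Y}\|^2}<\infty$, then setting $s=1$ in \eqref{fix:Y1} kills the indicators $\mathbf 1_{Q_1}(t,1)$ and $\mathbf 1_{Q_3}(t,1)$ (since $V<1$ a.s.) while $\varphi_2'(1) = \varphi_4'(1) = 1$, yielding
\[
\widetilde{\mathcal Y}(\cdot, 1) \stackrel{d}{=} \sum_{r=1}^4 \mathbf 1_{Q_r^{(1)}}(\cdot)\, A_r^\beta\, \widetilde{\mathcal Y}^{(r)}(\varphi_r(\cdot), 1).
\]
This is precisely the fixed-point equation \eqref{fix:lim} for $\mathcal Z$, so property (vi) of Section~\ref{sec:partial_match} yields $\widetilde{\mathcal Y}(\cdot,1)\stackrel{d}{=} c\, \mathcal Z$ for some $c\in\Rset$. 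Since the operators $B_r$ are linear, $c\, \mathcal Y$ is itself a solution of \eqref{fix:Y1}, so replacing $\mathcal Y$ by $c\,\mathcal Y$ reduces everything to the case $\widetilde{\mathcal Y}(\cdot,1)\stackrel{d}{=}\mathcal Y(\cdot,1)$.

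Next, I would set
\[
\delta^2 \;:=\; \inf \Ec{\|\mathcal Y - \widetilde{\mathcal Y}\|^2},
\]
where the infimum runs over all couplings of the two fields such that $\mathcal Y(\cdot,1) = \widetilde{\mathcal Y}(\cdot,1)$ almost surely. This is finite (couple the boundary traces equal via their common marginal, then take the remainders conditionally independent), and the aim is to show $\delta=0$. Given a coupling nearly attaining $\delta$, I would take four i.i.d.\ copies $(\mathcal Y^{(r)}, \widetilde{\mathcal Y}^{(r)})_{r=1,\dots,4}$ of it, independent of a fresh pair $(U,V)$, and form $\widehat{\mathcal Y} := \sum_r B_r(\mathcal Y^{(r)})$ and $\widehat{\mathcal Y}':= \sum_r B_r(\widetilde{\mathcal Y}^{(r)})$. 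By \eqref{fix:Y1} these have the correct marginal laws, and combining the boundary computation above with $\mathcal Y^{(r)}(\cdot,1) = \widetilde{\mathcal Y}^{(r)}(\cdot,1)$ a.s., they still satisfy $\widehat{\mathcal Y}(\cdot,1) = \widehat{\mathcal Y}'(\cdot,1)$ a.s. The decisive gain is that the ``boundary'' terms $f(\varphi_1(\cdot),1)$ inside $B_1$ and $f(\varphi_3(\cdot),1)$ inside $B_3$ cancel in the pointwise difference. Since $Q_1,\dots,Q_4$ partition $[0,1]^2$, on each $Q_r$ the difference reduces to the single term $A_r^\beta\bigl(\mathcal Y^{(r)} - \widetilde{\mathcal Y}^{(r)}\bigr)(\varphi_r(t),\varphi_r'(s))$, leading via independence and the definition of $\gamma$ in \eqref{gamma} to
\[
\Ec{\|\widehat{\mathcal Y} - \widehat{\mathcal Y}'\|^2} \;\leq\; \sum_{r=1}^4 \Ec{A_r^{2\beta}}\, \Ec{\|\mathcal Y^{(r)} - \widetilde{\mathcal Y}^{(r)}\|^2} \;=\; \gamma\, \Ec{\|\mathcal Y - \widetilde{\mathcal Y}\|^2},
\]
with $\gamma=4/(2\beta+1)^2<1$. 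Since $(\widehat{\mathcal Y}, \widehat{\mathcal Y}')$ is itself an admissible coupling, passing to the infimum gives $\delta^2 \leq \gamma \delta^2$, hence $\delta = 0$ and $\mathcal Y \stackrel{d}{=} \widetilde{\mathcal Y}$, completing the uniqueness up to the constant $c$.

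The main technical obstacle I anticipate is arranging an admissible coupling that nearly attains $\delta$ while still permitting the i.i.d.\ replication used in the contraction step. On the Polish space $\Ctwo$ this is handled by disintegrating each field over its $s=1$-trace (which lives in $\Cone$ and has common law after the first step), identifying the two traces pathwise, and then using the standard existence of optimal $L^2$-couplings on Polish spaces to glue the conditional laws of the remainders; the resulting random element of $\Ctwo\times\Ctwo$ can be replicated i.i.d.\ in the usual way. The degenerate case $c=0$ is automatically covered since then the same argument forces $\widetilde{\mathcal Y}$ to coincide in distribution with the trivial zero solution.
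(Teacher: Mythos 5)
Your proof is correct, but it takes a genuinely different route from the paper. The paper invokes the abstract uniqueness result \cite[Lemma 18]{NeSu}, which gives uniqueness of the fixed point of $T$ within the class $\mathcal M_2(\mathcal L(\mathcal Y))$ of laws sharing the mean function of $\mathcal Y$ (identified via the Riesz representation of $\Ctwo^*$), and then separately pins down the mean function of an arbitrary fixed point: it shows $m_\eta(\cdot,1)=K_1h$ by the argument of \cite[Section 5]{CuJo2010} and then feeds this into the Banach-fixed-point analysis of the mean operator $G$ from Proposition~\ref{prop:meanY}. You instead pin down the full \emph{distribution} of the boundary trace at $s=1$ by observing that it solves \eqref{fix:lim} and appealing to the already-known uniqueness property (vi) of $\mathcal Z$, and then run a hands-on $L^2$-Wasserstein contraction restricted to couplings that identify the traces pathwise; the triangular structure of the operators $B_r$ (the only cross terms are evaluations at $s=1$) makes the boundary terms cancel, leaving exactly the factor $\gamma=\sum_r\Ec{A_r^{2\beta}}<1$ of \eqref{gamma}. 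What your approach buys is that it bypasses both the machinery of \cite{NeSu} and the mean-function computation via $G$ and Curien--Joseph, reusing instead the uniqueness statement for $\mathcal Z$ that the paper already imports; what the paper's approach buys is uniformity, since the same Lemma-18 template is reused verbatim for Propositions~\ref{prop:MMYY} and~\ref{thm2}. Two points you should make explicit to be fully rigorous: (a) the admissible class of couplings is nonempty and closed under the recursive construction, which requires disintegrating $\mathcal L(\mathcal Y)$ and $\mathcal L(\widetilde{\mathcal Y})$ over their $\Cone$-valued traces and gluing regular conditional distributions (standard on Polish spaces, as you note); and (b) the recursively built pair $(\widehat{\mathcal Y},\widehat{\mathcal Y}{}')$ is almost surely $\Ctwo$-valued because each component has the law of a continuous field, so the supremum-norm estimate is legitimate. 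Neither point is a gap, only a detail to record.
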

\begin{proof} While most parts of the functional contraction method in \cite{NeSu} are developed in the space $\Cone$ (and the space of c{\`a}dl{\`a}g functions on $[0,1]$) many results are formulated for general separable Banach spaces. Let $\mathcal M$ be the set of probability distributions on $\Ctwo$. 
Solutions of \eqref{fix:Y1} in the space $\mathcal M$ are then fixed-points of the map $T : \mathcal M \to \mathcal M$ such that for $\nu\in \cal M$, we have 
% \fhen{I changed all $\mu$ used for prob. measures to $\nu$ and all $\nu$ to $\nu'$. At one point we also use $\nu$ for an expectation but that is only in a proof and should not cause confusion.}
\[
T(\nu) = \mathcal L \bigg(\sum_{r=1}^4 B_r(Z^{(r)}) \bigg),
\]
where we write $\mathcal L(\,\cdot\,)$ for the distribution of a random variable, $Z^{(1)}, \ldots, Z^{(4)}$, $U$, $V$ are independent, and $Z^{(1)}, \ldots, Z^{(4)}$ are distributed like $\nu$.

Let $\mathcal M_2$ denote the set of $\nu \in \mathcal M$ with $\int \| f \|^2 d\nu(f) < \infty$.
For a probability distribution $\nu \in \mathcal M_2$, let $\mathcal M_2(\nu)$ be the set of probability distributions $\nu' \in \mathcal M_2$  satisfying 
\begin{align}\label{cond1} 
\Ec{\psi(Z)} = \Ec{\psi(Z')} \quad \text{for all } \psi \in \Ctwo^*, 
\end{align}
where $\mathcal L(Z) = \nu$, $\mathcal L(Z') =\nu'$ and $\Ctwo^*$ denotes the topological dual space of $\Ctwo$, that is, the space of linear maps $\psi : \Ctwo \to \mathbb R$ with
\[\|\psi\| := \sup_{f \in \Ctwo, \|f\|=1}  |\psi(f)| < \infty.\]
By \cite[Lemma 18]{NeSu} (applied with $s=2$ in the notation there), a sufficient condition to ensure that there exists at most one fixed-point of $T$ in $\mathcal M_2(\mathcal L(\mathcal Y))$ is that 
\begin{enumerate}
\item [(i)] $T(\mathcal M_2(\mathcal L(\mathcal Y))) \subseteq \mathcal M_2(\mathcal L(\mathcal Y))$, and
\item [(ii)] $\sum_{r=1}^4 \Ec{\|B_r\|^2} < 1$.
\end{enumerate}
The second claim is easy to verify as $\|B_r\| = A_r^\beta$, and the sum therefore equals $\gamma$ defined in \eqref{gamma}. To prove (i), since $\|B_r(Z^{(r)})\| \leq \|B_r\| \cdot \|Z^{(r)}\|$ and these factors are independent, a simple application of Minkowski's inequality shows $T(\mathcal M_2) \subseteq \mathcal M_2$. Next, we recall the Riesz representation theorem (see, e.g.\ \cite[Theorem IV.6.3] {dun_sch}): for $\psi \in \Ctwo^*$ there exists a (unique) finite signed measure $\eta$ on $[0,1]^2$ such that $\psi(f) = \int f(t) d\eta(t)$. Thus, by Fubini's theorem, condition \eqref{cond1} is satisfied if $Z$ and $Z'$ have the same mean functions. As we already know by Proposition~\ref{prop:limity} that $\mathcal L( \mathcal Y)$ is a fixed-point of $T$ in the set $\mathcal M_2(\mathcal L(\mathcal Y))$, the map $T$ preserves the mean function on this set. This finishes the proof of (i). 

To conclude the proof of the proposition, it remains to show that the mean function 
$m_\eta(t,s) := \int f(t)g(s) d \eta(f,g)$
of a fixed point $\eta$ of $T$ with $\eta \in \mathcal M_2$ is equal to the mean function of $\mathcal Y$ up to a multiplicative constant. By homogeneity, it suffices to consider the case when
$\Ec{m_\eta(\xi,1)} = \Ec{\mathcal Y(\xi, 1)} = \kappa$ with $\kappa$ defined in \eqref{limit_unif_const}.
The claim follows from the previous proof as the map $G$ defined in \eqref{fix:g} has at most one fixed-point in $\Ctwo$ upon verifying that $m_\eta(t,1) = K_1 h(t)$. The map $f(t) = m_\eta(t,1)$ satisfies $\Ec{f(\xi)}=\kappa$ and
$$f(t) = \frac{2}{\beta+1} \left( \int_{t}^1 u^\beta h  \left( \frac t u \right) du +  \int_{0}^t (1-u)^\beta h\left( \frac {t-u} {1-u} \right) du \right), \quad t \in [0,1].$$
By the argument in \cite[Section 5]{CuJo2010}, the unique continuous (or only bounded and measurable) function satisfying these two properties is $K_1 h(t)$. This concludes the proof.
\end{proof}

\section{Proofs of the main results}
\label{sec:proofs}

The representation in Lemma~\ref{lem:decomp} requires us to also consider partial match queries when the first coordinate is arbitrary, and the second should equal a specified value. Of course, all the results we have devised in Sections~\ref{sec:partial_match},~\ref{sec:one_sided} and~\ref{sec:constrained} apply by symmetry, and we only need to set the notations to avoid confusions.

To this end, for $i \geq 1$, let the point $\,\overline{\! X}_i\in [0,1]^2$ be obtained from $X_i$ be swapping the two coordinates. Switching from $(X_i)_{i\ge 1}$ to $(\, \overline{\!X}_i)_{i\ge 1}$ does not alter the shape of the trees: there is a consistent family of relabellings that transforms $(\overline{T}_n)_{n\ge 1}$ into $(T_n)_{n\ge 1}$. However, the corresponding partitions of the unit square are modified (and obtained from one another by a simple symmetry with respect to the principle diagonal of $[0,1]^2$). More specifically, for each $v\in \mathbb T$, the region $\,\overline{\!Q}_v$ is obtained from $Q_v$ by swapping the coordinates of the four corners. Further, for the time-transformations defined in \eqref{phi1} and \eqref{phi2}, we have $\overline \varphi_v = \varphi'_v$ and $\overline \varphi'_v = \varphi_v$. In particular, 
$\,\overline{\!A}_v = A_v$ for all $v \in \mathbb T$. We define the operators $\,\overline{\!B}_r^v, r=1, \ldots, r$ analogously to $B_r^v$ in \eqref{opB} upon replacing $Q_v, \varphi_v$ and $\varphi'_v$ there by their analogues $\overline Q_v, \overline \varphi_v$ and $\overline \varphi'_v$.  

Finally, we shall define the processes $\,\overline{\!C}_n$, $\overline{Y}_n$ and their one-sided versions in the process with swapped coordinates analogously to $C_n, Y_n$.
Of course, all results proved above hold analogously for these quantities, and we denote the corresponding limits by $\overline{\mathcal Z}, \overline{\mathcal Y}$. The joint distributions of these quantities are intricate, but we can characterize them by distributional  fixed-point equations.

\begin{proposition} \label{prop:MMYY}
(a) Up to a multiplicative constant, the pair $(\mathcal Z, \,\overline {\!\mathcal Z})$ is the unique $\Cone^2$-valued process (in distribution) satisfying $\Ec{\|\mathcal Z \|^2}$, $\Ec{\|\,\overline{\!\mathcal Z}\|^2} < \infty$ and
\begin{align} \label{fix:limMM} 
(\mathcal Z, \, \overline{\!\mathcal Z}) 
\stackrel{d}{=} 
% \sum_{r=1}^4 A_{r}^\beta \left( \left( \mathbf 1_{Q_{r}^{(1)}}(t)     \mathcal Z^{(r)}(\varphi_{r}(t)) \right)_{t \in [0,1]}, \left( \mathbf 1_{Q_{r}^{(2)}}(s)    \,\overline{\!\mathcal Z}{}^{(r)}(\varphi'_{r}(s)) \right)_{s \in [0,1]} \right). 
\sum_{r=1}^4 A_{r}^\beta \left( \mathbf 1_{Q_{r}^{(1)}}(\,\cdot\,)     \mathcal Z^{(r)}(\varphi_{r}(\,\cdot\,)) , \mathbf 1_{Q_{r}^{(2)}}(\,\cdot\,)    \,\overline{\!\mathcal Z}{}^{(r)}(\varphi'_{r}(\,\cdot\,)) \right). 
\end{align} 
Here, $(\mathcal Z^{(1)}, {\,\overline{\!\mathcal Z}{}^{(1)}}),  \ldots, (\mathcal Z^{(4)}, \, \overline{\!\mathcal Z}{}^{(4)})$ are independent copies of $(\mathcal Z, \,\overline{\!\mathcal Z})$, independent of  $U, V$.

\noindent (b) Similarly, up to a multiplicative constant, the pair $(\mathcal Y, \,\overline{\!\mathcal Y})$ is the unique $\Ctwo^2$-valued process satisfying $\Ec{\|\mathcal Y \|^2}$, $\Ec{\|\,\overline{\!\mathcal Y}\|^2} < \infty$ and
\begin{align} \label{fix:limYY} 
(\mathcal Y, \overline{\mathcal Y}) \stackrel{d}{=} 
\sum_{r=1}^4 \left(B_{r} (\mathcal Y^{(r)}),  \,\overline{\!B}_{r} (\,\overline{\!\mathcal Y}{}^{(r)})\right). 
\end{align} 
Here, $(\mathcal Y^{(1)}, \,\overline{\!\mathcal Y}{}^{(1)}),  \ldots, (\mathcal Y^{(4)}, \,\overline{\!\mathcal Y}{}^{(4)})$ are independent copies of $(\mathcal Y, \,\overline{\!\mathcal Y})$, independent of  $U, V$.
\end{proposition}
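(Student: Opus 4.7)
The plan is to adapt the argument of Proposition~\ref{prop:idY} to the product Banach spaces $\Cone^2$ (for part~(a)) and $\Ctwo^2$ (for part~(b)), both equipped with the norm $\|(f,g)\| := \max(\|f\|, \|g\|)$ so that they remain separable. In each case I would define an operator $T$ on probability measures on the product space by setting $T(\nu)$ equal to the law of the right-hand side of~\eqref{fix:limMM} (resp.~\eqref{fix:limYY}) when the four i.i.d.\ summands $(\mathcal Z^{(r)}, \,\overline{\!\mathcal Z}{}^{(r)})$ (resp.\ $(\mathcal Y^{(r)}, \,\overline{\!\mathcal Y}{}^{(r)})$) are distributed according to $\nu$; fixed-points of $T$ are precisely the distributional solutions of the corresponding stochastic fixed-point equation. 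The remainder of the argument is then a direct application of \cite[Lemma~18]{NeSu} with $s=2$, following the template of Proposition~\ref{prop:idY}.

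Two conditions must be verified. First, for the contraction bound, note that in both product settings the $r$-th summand operator has a block structure acting on the two coordinates separately and each block has operator norm $A_r^\beta$ (for part~(a) this is trivial, while for part~(b) it was already shown implicitly in the proof of Proposition~\ref{prop:limity} using the fact that $Q_{vr}, Q_{v(r+1)}$ have disjoint supports). Hence the full block operator still has norm $A_r^\beta$, and the required bound $\sum_{r=1}^4 \Ec{\|L_r\|^2} = \gamma < 1$ is inherited from~\eqref{gamma} with no new computation. Second, $T$ must preserve the mean-function class $\mathcal M_2(\mathcal L(\mathcal Z, \,\overline{\!\mathcal Z}))$ (resp.\ $\mathcal M_2(\mathcal L(\mathcal Y, \,\overline{\!\mathcal Y}))$). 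Since the continuous dual of $\Cone^2$ decomposes as the direct sum of two copies of the dual of $\Cone$, Riesz representation combined with Fubini's theorem reduces the preservation of $\psi$-expectations under $T$ to the preservation of the two marginal mean functions; this is immediate because $\mathcal Z$ and $\,\overline{\!\mathcal Z}$ (resp.\ $\mathcal Y$ and $\,\overline{\!\mathcal Y}$) are themselves fixed-points of the respective marginal equations by property~(v) in Section~\ref{sec:partial_match} and Proposition~\ref{prop:limity}.

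To pin down the distribution up to the stated scaling, I would then observe that projecting~\eqref{fix:limMM} onto its first coordinate yields exactly the marginal equation~\eqref{fix:lim} with i.i.d.\ copies of the projected process as summands, so by property~(vi) of Section~\ref{sec:partial_match} that first component is a scalar multiple of $\mathcal Z$ in distribution; symmetrically for the second coordinate, and analogously for part~(b) using Proposition~\ref{prop:idY}. The main obstacle is conceptual rather than computational: since the joint fixed-point equation couples its two components only through the shared $(U,V)$, the two marginals \emph{a priori} admit independent rescalings, so the class of solutions is really a two-parameter family. Fixing each marginal to its canonical normalization inherited from~\eqref{limit_unif_const} then determines the joint law uniquely, which is the precise content of the phrase ``up to a multiplicative constant'' in the statement.
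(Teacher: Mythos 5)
Your proposal is correct and follows essentially the same route as the paper: both reduce the statement to the argument of Proposition~\ref{prop:idY} carried out on the product space, the only genuinely new ingredient being that the dual of the product space splits into two copies of the dual of the factor, so that Riesz representation and Fubini reduce condition~\eqref{cond1} to equality of the two marginal mean functions, while the contraction bound $\gamma<1$ is inherited unchanged. Your closing observation that the solution set is really a two-parameter family (one scaling per coordinate, since each marginal separately solves \eqref{fix:lim} resp.\ \eqref{fix:Y1}) is a fair and slightly sharper reading of the paper's ``up to a multiplicative constant.''
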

\begin{proof} Both statements are proved following the lines of the proof of Proposition \ref{prop:idY}. To show the first, define $\mathcal M, \mathcal M_2$ and $\mathcal M_2(\nu)$ as in that proof, but in the space $\Cone^2$. The only step of that proof one needs to take a closer look at is the verification that, for $\nu, \nu' \in \mathcal M_2$, we have $\nu' \in \mathcal M_2(\nu)$ (that is, condition \eqref{cond1} holds), if $\Ec{Z_i(t,s)} = \Ec{Z'_i(t,s)}$ for $i=1,2$ and $t, s \in [0,1]$, where $\mathcal L((Z_1, Z_2)) = \nu$ and $\mathcal L((Z'_1, Z'_2)) = \nu'$. As in the proof of Proposition~\ref{prop:idY}, this follows from a simple application of Fubini's theorem since every bounded linear form $\psi \in (\Ctwo^2)^*$  can be written as $\psi(f,g) = \int f d \eta_1 + \int g d \eta_2$ for finite signed measures $\eta_1, \eta_2$ on $[0,1]^2$.
The second assertion for the process $(\mathcal Y,  \,\overline{\!\mathcal Y})$ follows analogously.
\end{proof}

\begin{proof} [Proofs of Theorem \ref{thm_main} and Proposition \ref{prop:mean_main}]
 % Let $N_n(a,b,c,d)$ denote the number of keys among $Y_1, \ldots, Y_n$ in $(a,b] \times (c,d]$. Further, let $R_1 = R_1(a,b,c,d)$ be the maximal rectangle southwest of $(a,b] \times (c,d]$. That is, $R_1$ has corners $(0,0), (a,0), (0,c)$ and $(a,c)$. Denote by $D^{(1)}_n(a,b,c,d)$  the number of keys in $T_n$ lying in $R_1$ visited by a search query in $T_n$ retrieving $(a,c)$. Analogously, we define $R_2, R_3, R_4$ and $D^{(2)}_n, D^{(3)}_n, D^{(4)}_n$ for the northwest, southeast and northeast directions.
Recall the representation in Lemma~\ref{lem:decomp}. We are now ready to make formal the arguments at the end of Section~\ref{sec:strategy}: Theorem \ref{thm_main} follows with 
\begin{align} \label{limitO} 
\cO(a,b,c,d) 
= \frac {\mathcal Y(b,d) - \mathcal Y(b,c) + \mathcal Y(a,d) - \mathcal Y(a,c) + \,\overline{\!\mathcal Y}(d,b) - \,\overline{\!\mathcal Y}(d,a) + \,\overline{\!\mathcal Y}(c,b) -  \,\overline{\!\mathcal Y}(c,a)}2,
\end{align}
since
\begin{enumerate}
\item [(i)] the summands involving $Y_n^\geq, Y_n^<, \overline Y{}^\geq_n$ and $\overline{Y}{}^<_n$ converge uniformly after rescaling by Proposition \ref{prop:convY},
\item [(ii)] $\| n^{-\beta} D^{(i)}_n \| \to 0$ for $i=1, \ldots, 4$, in probability and with convergence of moments since  $D^{(i)}_n$ is bounded from above by the height of $T_n$ whose $p$th moment for $p \geq 1$ is well-known to be $O((\log n)^p)$ \cite[see, e.g,.][]{Devroye1987}, and  
\item [(iii)] 
$\| n^{-\beta} (N_n - n \vol ) \| \to 0$ in probability and with convergence of moments which follows easily from the extension of the Dvoretzky--Kiefer--Wolfowitz inequality to probability distributions on $\mathbb R^2$ in \cite{kie_wol} stating that there exist $c, C > 0$ such that
$$\sup_{n \geq 1} \Prob{ \sup_{0 \leq t, s \leq 1} \left  | \frac{N_n(0,t,0,s) - t s n}{\sqrt{n}} \right | \geq y} \leq C e^{-c y^2}, \quad y \geq 0.$$

%by {\hen \cite[Theorem 1]{kie_wol} extending t. %\citeargument given in the proof of Corollary \ref{cor:br}, and

\end{enumerate}
Proposition \ref{prop:mean_main} follows from \eqref{limitO} and Proposition \ref{prop:meanY}.
\end{proof}
To characterize the limit process $\cO$, we consider the distributional decomposition of $O_n$. As for the process $Y_n$, this requires the definition of four linear operators, this time on the space $\Cfour^+$.
We set 
\begin{equation} \label{opD}
\begin{aligned}
D_1(f)(a,b,c,d) = A_{1}^\beta \big[ & \mathbf 1_{Q_{1}} (a,c) \mathbf 1_{Q_{1}} (b,d)    f(\varphi_{1}(a), \varphi_{1}(b), \varphi'_{1}(c), \varphi'_{1}(d)) \\
& + \mathbf 1_{Q_{1}} (a,c) \mathbf 1_{Q_{2}} (b,d)    f(\varphi_{1}(a), \varphi_{1}(b), \varphi'_{1}(c), 1) \\
& + \mathbf 1_{Q_{1}} (a,c) \mathbf 1_{Q_{3}} (b,d)    f(\varphi_{1}(a), 1, \varphi'_{1}(c), \varphi'_{1}(d)) \\
& + \mathbf 1_{Q_{1}} (a,c) \mathbf 1_{Q_{4}} (b,d)    f(\varphi_{1}(a), 1, \varphi'_{1}(c), 1) \big] \\ 
D_2(f)(a,b,c,d) = A_{2}^\beta \big[ & \mathbf 1_{Q_{2}} (a,c) \mathbf 1_{Q_{2}} (b,d)    f(\varphi_{2}(a), \varphi_{2}(b), \varphi'_{2}(c), \varphi'_{2}(d)) \\
& + \mathbf 1_{Q_{1}} (a,c) \mathbf 1_{Q_{2}} (b,d)    f(\varphi_{2}(a), \varphi_{2}(b), 0, \varphi'_{2}(d)) \\
& + \mathbf 1_{Q_{2}} (a,c) \mathbf 1_{Q_{3}} (b,d)    f(\varphi_{2}(a), 1, \varphi'_{2}(c), \varphi'_{2}(d)) \\
& + \mathbf 1_{Q_{1}} (a,c) \mathbf 1_{Q_{4}} (b,d)    f(\varphi_{2}(a), 1, 0, \varphi'_{2}(d)) \big] 
\end{aligned}
\end{equation}
\begin{equation}\label{opD2}
\begin{aligned}
D_3(f)(a,b,c,d) = A_{3}^\beta \big[ & \mathbf 1_{Q_{3}} (a,c) \mathbf 1_{Q_{3}} (b,d)    f(\varphi_{3}(a), \varphi_{3}(b), \varphi'_{3}(c), \varphi'_{3}(d)) \\
& + \mathbf 1_{Q_{1}} (a,c) \mathbf 1_{Q_{3}} (b,d)    f(0, \varphi_{3}(b), \varphi'_{3}(c), \varphi'_{3}(d)) \\
& + \mathbf 1_{Q_{3}} (a,c) \mathbf 1_{Q_{4}} (b,d)    f(\varphi_{3}(a), \varphi_{3}(b), \varphi'_{3}(c), 1) \\
& + \mathbf 1_{Q_{1}} (a,c) \mathbf 1_{Q_{4}} (b,d)    f(0,  \varphi_{3}(b), \varphi'_{3}(c), 1) \big] \\
D_3(f)(a,b,c,d) = A_{4}^\beta \big[ & \mathbf 1_{Q_{4}} (a,c) \mathbf 1_{Q_{4}} (b,d)    f(\varphi_{4}(a), \varphi_{4}(b), \varphi'_{4}(c), \varphi'_{4}(d)) \\
& + \mathbf 1_{Q_{2}} (a,c) \mathbf 1_{Q_{4}} (b,d)    f(0, \varphi_{4}(b), \varphi'_{4}(c), \varphi'_{3}(d)) \\
& + \mathbf 1_{Q_{3}} (a,c) \mathbf 1_{Q_{4}} (b,d)    f(\varphi_{4}(a), \varphi_{4}(b), 0, \varphi'_{3}(d)) \\
& + \mathbf 1_{Q_{1}} (a,c) \mathbf 1_{Q_{4}} (b,d)    f(0,  \varphi_{4}(b), 0, \varphi'_{4}(d)) \big].
\end{aligned}
\end{equation}
 
\begin{proof}[Proof of Proposition \ref{thm2}]  By construction, we have, for every $n\ge 1$,
$$O_n \stackrel{d}{=} \left(\sum_{r=1}^4 D_r(O_{N_r}^{(r)})(a,b,c,d) + 1 \right)_{(a,b,c,d) \in I}$$
with conditions on independence and distributions as in \eqref{rec:pm}. By Theorem~\ref{thm_main} it follows that the limit field $\cO$ is a solution to the fixed-point equation \eqref{fix:O}. By the same argument used in the proofs of Propositions~\ref{prop:idY} and~\ref{prop:MMYY}, one shows that $\mathcal L (\cO)$ is the unique solution of \eqref{fix:O} in the set $\mathcal M_2(\mathcal L(\cO))$, where this set is defined as in the proof of Proposition~\ref{prop:idY} but with $\Cfour^+$ instead of $\Ctwo$. It remains to check that, up to a multiplicative constant, the operator $G^{**}$ given by $G^{**}(f)(a,b,c,d) = \sum_{r=1}^4 \Ec{D_r(f)(a,b,c,d)}$, has a unique fixed-point in the space $\Cfour^+$. This follows from several applications of the contraction arguments used in the proof of Proposition~\ref{prop:idY} whose structure we now describe. Let $\varrho_1, \varrho_2 \in \Cfour^+$ be two fixed-points of $G^{**}$ satisfying
$\Ec{\varrho_1(\xi, \xi, 0,1)} = \Ec{\varrho_1(\xi, \xi, 0,1)} = \kappa$. (a) By \cite[Section 5]{CuJo2010}, this implies $\varrho_1(x,x,0,1) = \varrho_2(x,x,0,1) = K_1 h(x)$. (b) Then, setting $(a,c) = (0,0)$, the contraction argument in the proof of Proposition~\ref{prop:idY} shows that $\varrho_1(0, x,0,y) = \varrho_2(0, x,0,y)$; proceeding analogously for the choices $(a,d) = (0,1)$, $(b,c) = (1,0)$ and $(b,d) = (1,1)$ yields $\varrho_1(0,x,y,1)=\varrho_2(0,x,y,1)$, $\varrho_1(x,1,0,y)=\varrho_2(x,1,0,y)$ and $\varrho_1(x,1,y,1)=\varrho_2(x,1,y,1)$. (c) Then, in the next step, one sets $a=0$ and proves along the same lines that $\varrho_1(0, x,y,z) = \varrho_2(0, x,y,z)$; analogously for $b=1, c=0$ and $d=1$ yields $\varrho_1(x, 1, y,z) = \varrho_2(x,1,y,z)$, $\varrho_1(x,y,0,z) = \varrho_2(x,y,0,z)$ and $\varrho_1(x,y,z,1) = \varrho_2(x,y,z,1)$. Finally, with these identities in hand, one can verify that $\|\varrho_1 - \varrho_2 \| = \|G^{**}(\varrho_1) - G^{**}(\varrho_2)\| \leq \sqrt \gamma \|\varrho_1 - \varrho_2 \|$, where $\gamma$ is defined in \eqref{gamma}, just as for the operator $G$ occurring in the proof of Proposition~\ref{prop:meanY}. Since $\gamma<1$, this gives $\varrho_1 = \varrho_2$ and concludes the proof.
\end{proof}

\section{Orthogonal range queries in random $2$-d trees}
\label{sec:kd}

The $2$-d trees have been introduced by \citet{Bentley1975}. As for quadtrees, the data are partitioned recursively, but the splits in $2$-d trees are only binary; since the data is two-dimensional, one alternates between vertical and horizontal splits, depending on the parity of the level in the tree. Given a sequence of points $p_1,p_2,\ldots \in [0,1]^2$, the tree and the regions associated to each node are constructed as follows. Initially, $T_0$ is an empty tree, which consists of a placeholder, to which we assign the entire square $[0,1]^2$. The first point $p_1$ is inserted in this placeholder, and becomes the root, thereby giving rise to two new placeholders. Geometrically, $p_1$ splits \emph{vertically} the unit square in two rectangles, which are associated with the two children of the root. More generally, when $i$ points have already been inserted, the tree $T_i$ has $i$ internal nodes, and induces a partition of the unit square into $i+1$ regions, each one associated to one of the $i+1$ placeholders of $T_i$. The point $p_{i+1}$ is then stored in the placeholder, say $v$, that is assigned to the rectangle of the partition containing $p_{i+1}$. This operation turns $v$ into an internal node, and creates two new placeholders just below. Geometrically, $p_{i+1}$ divides this rectangle into two subregions that are assigned to the two newly created placeholders; that last partition step depends on the parity of the depth of $v$ in the tree:  if it is odd we partition horizontally,  if it is even we partition vertically. See Figure~\ref{fig:2dtrees} for an illustration. (Of course, one could start at the root with a \emph{horizontal} split, and then splits would occur horizontally at even levels and vertically at odd levels.) 

\begin{figure}[tb]
    \centering
    \includegraphics[width=0.95\textwidth]{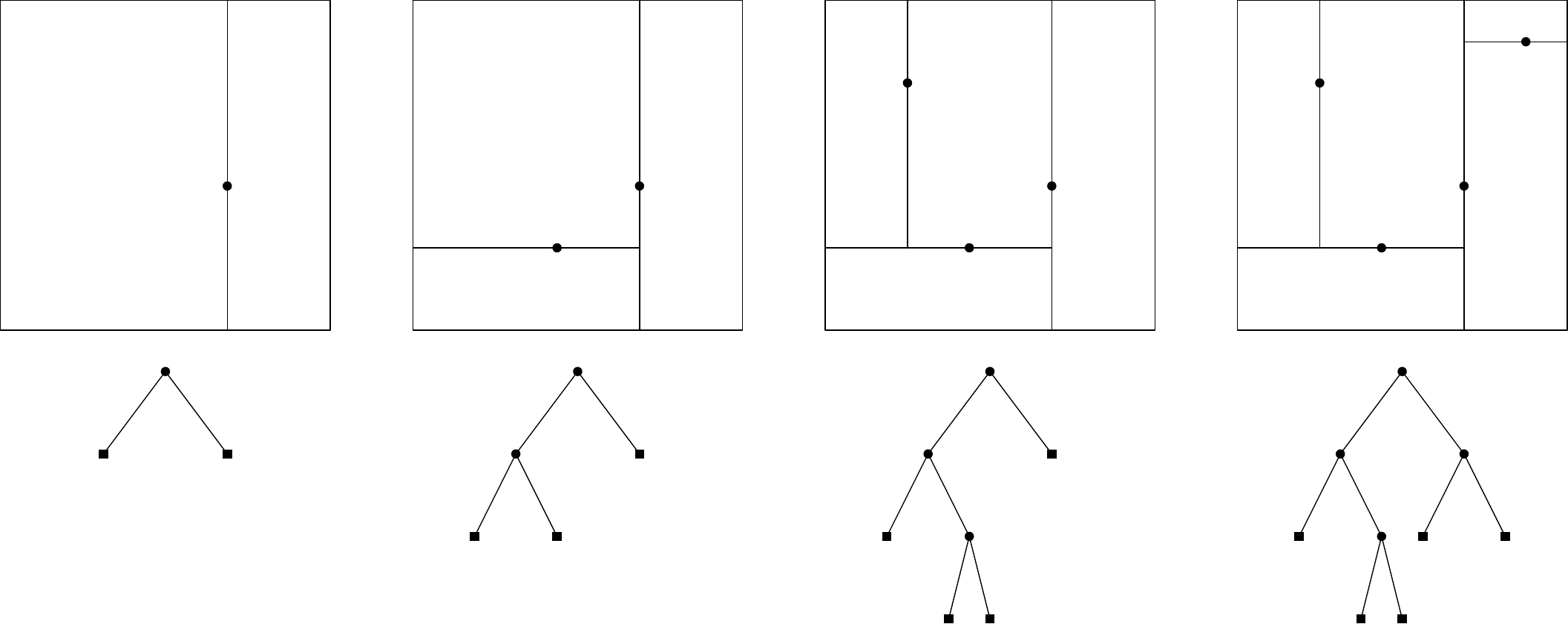}
    \caption{The first four steps of the construction of a $2$-d tree, and the corresponding partitions of the unit square.}
    \label{fig:2dtrees}
\end{figure}

We now consider a sequence $(X_i)_{i\ge1}$ of i.i.d.\ uniform random points in $[0,1]$, and the $2$-d trees obtained by sequential insertion of $X_1,X_2,\dots$ into an initially empty $2$-d tree. It is convenient to consider the trees as subtrees of the infinite binary tree ${\mathbb T}_2 = \cup_{m\ge 0} \{1,2\}^m$. 
%\fnic{Just changed that one, it costs nothing.}
% \fhen{$\mathbb T$ for the quaternary and $\cal T$ for the binary is not super consistent but I can live with it as we do not have any other trees in the paper.} 
We can actually construct at the same time two sequences of trees $(T_n^=)_{n\ge 0}$ and $(T_n^{\perp})_{n\ge 0}$ as well as the corresponding refining partitions of $[0,1]^2$ encoded in the collections $\{Q_v^=: v\in {\mathbb T}_2\}$ and $ \{Q_v^\perp: v\in {\mathbb T}_2\}$, which correspond to the two cases where the split at the root is horizontal or vertical respectively. Put aside the binary splitting, the construction is similar to the one in Section~\ref{sec:partial_match} and we omit the details, and only mention that if $X_1=(U,V)$, then
\[
\arraycolsep=1.4pt
\left\{
\begin{array}{ll}
Q_1^= & = [0,1]\times [0,V] \\
Q_2^= & = [0,1]\times (V,1]
\end{array}
\right.
\qquad \text{and} \qquad 
\left\{
\begin{array}{ll}
Q_1^\perp & = [0,U] \times [0,1] \\
Q_2^\perp & = (U,1] \times [0,1]\,.
\end{array}
\right.
\]

For $(a,b,c,d)\in I$, let $O_n^{=}(a,b,c,d)$ and $O_n^{\perp}(a,b,c,d)$ denote the number of nodes of the $2$-d tree visited to perform the query with rectangle $Q(a,b,c,d)$ when the partition at the root is horizontal or vertical respectively. %\sout{\emph{horizontal} or \emph{vertical}, respectively}. \footnote{It should be enough to highlight these words in the description. Otherwise, we would have to do it everywhere.} 
Define \,$\overline{\! O}{}_n^=$ and \,$\overline{\! O}{}_n^\perp$ analogously in the $2$-d tree constructed from the sequence \,$\overline{\! X}_i, i \geq 1$, obtained by swapping the two coordinates of each point. By construction, we have $O_n^{=}(a,b,c,d) = \overline{\! O}{}_n^\perp(c,d,a,b)$ and $O_n^{\perp}(a,b,c,d) = \overline{\! Q}{}_n^=(c,d,a,b)$. In particular, with 
\begin{align} \label{def_swap}
\lambda: I \to I, \quad \lambda(a,b,c,d)  = (c,d,a,b), 
\end{align} 
the sequences $(O_i^=)_{i \geq 1}$ and $(O_i^\perp \circ \lambda)_{i \geq 1}$ are identically distributed. Hence, it suffices to focus on the sequence
$O_i^=, i \geq 1$.

\begin{thm}\label{thm:main_kd}
There exist random continuous $\Cfour^+$-valued random variables ${\cO^{=}}$ and ${\cO^{\perp}}$ (random fields) such that, in probability and with convergence of all moments,
\[\Big \| \frac{O^=_n - n \vol }{n^\beta} - {\cO^=}\Big \| \xrightarrow[n\to\infty]{} 0\,
\qquad\text{and}\qquad
\Big \| \frac{O^\perp_n - n \vol }{n^\beta} - {\cO^\perp}\Big \| \xrightarrow[n\to\infty]{} 0
.\]
Here, the constant $\beta$ is the same as for quadtrees and defined in \eqref{limit_unif_const}. The processes ${\cO^\perp}$ and ${\cO^=} \circ \lambda$ have the same distribution.
\end{thm}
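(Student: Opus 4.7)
The plan is to mirror the analysis of Sections~\ref{sec:strategy}--\ref{sec:proofs} in the binary, alternating-split setting of the 2-d tree. Two elementary observations reduce the workload significantly. First, the identity $O_n^{\perp}(a,b,c,d) = \overline{O}{}_n^{=}(\lambda(a,b,c,d))$ together with the fact that $(\overline{X}_i)_{i \ge 1}$ is itself an i.i.d.\ uniform sequence immediately reduces the convergence for $O_n^\perp$ to the corresponding one for $O_n^=$ applied to $(\overline{X}_i)_{i\ge 1}$, while simultaneously producing the distributional identity $\cO^\perp \stackrel{d}{=} \cO^= \circ \lambda$. Second, the scaling exponent coincides with the quadtree one because two consecutive binary splits in a 2-d tree generate four subregions whose relative areas are distributed as the four cells of a one-step quadtree split; hence $\beta$ is again the unique solution of $(x+2)(x+1) = 4$ in $(1/2, 1)$.

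To prove the convergence of $n^{-\beta}(O_n^= - n\vol)$, I would begin by establishing the exact 2-d tree analogue of Lemma~\ref{lem:decomp}. The geometric decomposition of visited nodes into those lying inside $Q(a,b,c,d)$, those in the four lateral strips $S_i$, and those in the four corner regions $R_j$ depends only on the axis-parallel nature of the cells of the partition and is therefore inherited verbatim. The interior count is again controlled uniformly in $(a,b,c,d)$ by the two-dimensional Dvoretzky--Kiefer--Wolfowitz inequality, while the four corner contributions are uniformly bounded by the height of $T_n^=$, which is known to be $O(\log n)$. The problem therefore reduces to establishing the 2-d tree versions of Lemma~\ref{lem:mean} and Propositions~\ref{prop:onesidedpm} and~\ref{prop:convY} for the one-sided and constrained partial match processes. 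These are obtained by a direct transcription of the arguments in Sections~\ref{sec:one_sided} and~\ref{sec:constrained} to the natural depth-two distributional recursion for 2-d trees, in which the four grand-children of the root play the role of the four children of the root of a quadtree. The functional fixed-point equations have exactly the same algebraic structure, the contraction constant $\gamma$ of \eqref{gamma} is unchanged, and the Curien--Joseph coupling used to prove Lemma~\ref{lem:mean} relies on the partial match mean asymptotics in 2-d trees already established in~\cite{BrNeSu13}.

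Once these partial match limits are in hand, assembling the decomposition yields the convergence of $n^{-\beta}(O_n^= - n\vol)$ to a limit field $\cO^=$, which admits a representation analogous to \eqref{limitO} in terms of the 2-d tree analogues of $\mathcal Y$ and $\overline{\mathcal Y}$. The characterization of $\cO^=$ as the unique $L^2$-solution of a distributional fixed-point equation, together with the explicit computation of its mean, then follows line-by-line from the proofs of Propositions~\ref{thm2} and~\ref{prop:mean_main}. The main obstacle is not analytic but organizational: one must carefully track the alternation of split directions when writing the two-level recursion and defining the linear operators that replace those of \eqref{opD}--\eqref{opD2}, and verify that after two consecutive splits the four resulting subproblems are indeed independent root-horizontal 2-d trees. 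This bookkeeping is the only genuinely new ingredient beyond the quadtree analysis.
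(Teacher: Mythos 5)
Your proposal is correct and follows essentially the route the paper intends: the paper gives no separate proof of Theorem~\ref{thm:main_kd}, treating it as a transcription of the quadtree argument, and your plan --- reducing $\cO^{\perp}$ to $\cO^{=}$ via the coordinate swap and $\lambda$, re-proving the decomposition lemma, killing the volume term by Dvoretzky--Kiefer--Wolfowitz and the corner terms by the height bound, and importing the one-sided and constrained partial match limits from the $2$-d tree machinery of \cite{BrNeSu13} --- is exactly that transcription (the paper organizes the fixed point as a one-level recursion with the $\lambda$-twist in \eqref{fix:Okd} rather than your two-level grandchild recursion, but both are viable). One small imprecision to fix: after two binary splits the four relative areas $(VU_1,\,V(1-U_1),\,(1-V)U_2,\,(1-V)(1-U_2))$ agree with the quadtree cells only in their one-dimensional marginal laws, not jointly (three independent uniforms instead of two), which suffices for $\beta$ and the contraction constant $\gamma$ of \eqref{gamma} to be unchanged but is precisely why $\cO^{=}$ is a genuinely different field with the distinct mean constants $\kappa^{=},\kappa^{\perp}$.
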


To characterize the limit field $\cO^=$ as the solution to a stochastic fixed-point equation let
us  keep track of the relative positions with respect to cells of the partition as follows: for $s\in [0,1]$, we set 
\[
\psi^=_1(s)= {\mathbf 1}_{[0,V]}(s)\frac{s}{V}
\qquad\text{and}\qquad
\psi^=_2(s)= {\mathbf 1}_{[V,1]}(s)\frac{s-V}{1-V}\,.
\]
%The limit fields $\cO^=$ and $\cO^\perp$ are uniquely characterized as the solution to a stochastic fixed-point equation. %Of course, their distributions are obtained from one another by swapping the first and second coordinates of the points: $\cO^=(a,b,c,d)$ is distributed like $\cO^\perp (c,d,a,b)$ for every $(a,b,c,d)\in I$. In the following, we enforce this constraint almost surely by considering the pair of processes $(\cO^=,\cO^\perp)$ obtained as the limit of $(O_n^=,O_n^\perp)_{n\ge 1}$ corresponding to trees $(T_n^=)_{n\ge 1}$ and $(T_n^\perp)_{n\ge 1}$ built from the same set of points $(X_i)_{i\ge 1}$. 
%Supposing that 
%$X_1=(U,V)$. 
Note that the volumes of the rectangular regions at the first level of the partition are 
\[A_{=,1} =  V, \qquad A_{=,2} = 1-V\,.\]
We now define the operators $D_1^=$ and $D_2^=$ 
% \footnote{I dont really understand these operators. There should be three cases: the rectangle lies in $Q_1$, in $Q_2$ or in both. When it lies in both, this definition does not seem to work since both terms contribute but then what is $\psi_1^=(d)$ as $d > V$? The definition says 0 but it should rather be 1, right? One could give a shorter definition by redefining
% $\psi$ appropriately but I would keep a more explicit version.} 
% {\nic 
% \begin{equation}\label{eq:opDpara}
% \begin{aligned}
% D_1^=(f)(a,b,c,d) & = A_{=,1}^\beta \mathbf{1}_{Q_1^=}(a,c) f(\varphi_1^=(a),\varphi_1^=(b), \psi_1^=(c), \psi_1^=(d))\\
% D_2^=(f)(a,b,c,d) & = A_{=,2}^\beta {\mathbf 1}_{Q_2^=}(b,d) f(\varphi_2^=(a),\varphi_2^=(b), \psi_2^=(c), \psi_2^=(d))\,,
% \end{aligned}
% \end{equation}
% and 
% \begin{equation}\label{eq:opDperp}
% \begin{aligned}
% D_1^\perp(f)(a,b,c,d) & = A_{\perp,1}^\beta \mathbf{1}_{Q_1^=}(a,c) f(\varphi_1^\perp(a),\varphi_1^\perp(b), \psi_1^\perp(c), \psi_1^\perp(d))\\
% D_2^\perp(f)(a,b,c,d) & = A_{\perp,2}^\beta {\mathbf 1}_{Q_2^=}(b,d) f(\varphi_2^\perp(a),\varphi_2^\perp(b), \psi_2^\perp(c), \psi_2^\perp(d))\,.
% \end{aligned}
% \end{equation}
% }
\begin{equation} \label{eq:epDe}
\begin{aligned}
D_1^=(f)(a,b,c,d) & = A_{=,1}^\beta \left[\mathbf{1}_{[0,V]}(d) f(a,b, \psi_1^=(c), \psi_1^=(d)) + \mathbf{1}_{[0,V]\times (V,1]}(c,d) f(a,b, \psi_1^=(c), 1)\right]\\
D_2^=(f)(a,b,c,d) & = A_{=,2}^\beta \left[{\mathbf 1}_{(V,1]}(c) f(a,b, \psi_2^=(c), \psi_2^=(d))+ \mathbf{1}_{[0,V]\times (V,1]}(c,d) f(a,b, 0, \psi_2^=(d)) \right].
\end{aligned}
\end{equation}

\begin{proposition}\label{pro:fixed_point_kd}
Up to a multiplicative constant, $\cO^=$ is the unique $\Cfour^+$-valued random field (in distribution) such that $\Ec{\|\cO^=\|^2} < \infty$ satisfying the stochastic fixed-point equation
\begin{align} \label{fix:Okd} 
{\cO^=} \stackrel{d}{=} D^=_1(\cO^{=,(1)} \circ \lambda) + D^=_2(\cO^{=,(2)} \circ \lambda)
\end{align}
where $\cO^{=,(1)}$ and $\cO^{=,(2)}$ are copies of $\cO^=$, $D_1^=, D_2^=$ are random linear operators defined in \eqref{eq:epDe} and $\lambda : I \to I$ is defined in \eqref{def_swap}. Furthermore, the random variables 
 $\cO^{=,(1)}, \cO^{=,(2)}$, and $(D_1^=, D_2^=)$ are independent.
\end{proposition}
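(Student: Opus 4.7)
The plan is two-fold: first establish that $\cO^=$ satisfies \eqref{fix:Okd}, and then prove uniqueness (up to a multiplicative constant) in the class of $\Cfour^+$-valued random fields with finite second moment.

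For existence, I would derive the discrete distributional recurrence for $O_n^=$ at the level of the whole random field. Conditioning on $X_1=(U,V)$, the 2-d tree of type $=$ splits at the root into two subtrees of sizes $N_1\sim \mathrm{Bin}(n-1,V)$ and $N_2=n-1-N_1$; crucially, by the alternating split rule, each subtree is itself a 2-d tree \emph{of type $\perp$}. The range query with rectangle $Q(a,b,c,d)$ visits the root and, in each subtree, a suitably rescaled range query obtained by applying $\psi^=_r$ to the $y$-coordinates. Distinguishing the case where the query lies entirely in one half ($d\le V$ or $c>V$) from the straddling case (where one rescaled $y$-endpoint becomes $0$ or $1$) produces exactly the two operators $D_1^=$ and $D_2^=$ applied to subtree complexities of type $\perp$, up to an additive root indicator of order $O(1)$. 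Dividing by $n^\beta$, using $N_r/n\to A_{=,r}$ a.s., Theorem~\ref{thm:main_kd} (with convergence of moments), and the distributional identity $\cO^\perp \stackrel{d}{=}\cO^=\circ\lambda$ noted before Theorem~\ref{thm:main_kd}, yields \eqref{fix:Okd}.

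For uniqueness, I follow the template of the proof of Proposition~\ref{thm2}. Let $T^=$ be the operator on $\mathcal{M}(\Cfour^+)$ defined by $T^=(\nu)=\mathcal{L}(D_1^=(Z^{(1)}\circ\lambda)+D_2^=(Z^{(2)}\circ\lambda))$, with $Z^{(1)},Z^{(2)}$ i.i.d.\ of law $\nu$ and independent of $(U,V)$. The operator norms satisfy $\Ec{\|D_r^=\|^2}=\Ec{A_{=,r}^{2\beta}}$, so
\[
\Ec{\|D_1^=\|^2}+\Ec{\|D_2^=\|^2}=\frac{2}{2\beta+1}<1,
\]
since $\beta>1/2$. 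Arguing exactly as in the proof of Proposition~\ref{prop:idY}, \cite[Lemma~18]{NeSu} applied in $\Cfour^+$ implies that $T^=$ admits at most one fixed point in $\mathcal{M}_2(\mathcal{L}(\cO^=))$, i.e., among laws with the same mean function as $\cO^=$. To fix the multiplicative constant, it remains to show that any two fixed points $\varrho_1,\varrho_2$ of the associated mean operator sharing the same normalization $\Ec{\varrho_i(\xi,\xi,0,1)}=\kappa$ coincide. I would mimic the three-step bootstrap from the proof of Proposition~\ref{thm2}: (a) identify $\varrho_i(x,x,0,1)=K_1 h(x)$ via \cite[Section~5]{CuJo2010}; (b) set one of $a,b,c,d$ to a boundary value and apply the contraction on the resulting (lower-dimensional) map to match $\varrho_1$ and $\varrho_2$ on all faces of $I$; (c) apply a final contraction of ratio $\sqrt{2/(2\beta+1)}$ to conclude $\|\varrho_1-\varrho_2\|=0$.

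The main obstacle is that the presence of $\lambda$ in \eqref{fix:Okd} couples the $x$- and $y$-dependencies at each recursive step: each stage of the mean-function bootstrap must simultaneously control $\varrho_i$ and $\varrho_i\circ\lambda$. This is less a serious technical difficulty than a bookkeeping burden; using the symmetry $\cO^\perp=\cO^=\circ\lambda$ (equivalently, that the boundary identifications on $I$ obtained by $\lambda$-swapping pairs of variables are compatible with the mean operator) reduces the problem to a single pass analogous to the quadtree case, and the rest of the quadtree argument transfers with only minor modifications.
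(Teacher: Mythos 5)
Your proposal is correct and follows essentially the same route the paper takes: the paper gives no separate proof of this proposition, deferring to the quadtree argument of Proposition~\ref{thm2} (discrete recurrence at the root plus Theorem~\ref{thm:main_kd} for existence; \cite[Lemma 18]{NeSu} together with a contraction bootstrap for the mean function for uniqueness), which is exactly what you reconstruct, including the correct contraction constant $2/(2\beta+1)<1$. The only imprecision is in step (a) of your bootstrap: for $2$-d trees the partial-match slices $\varrho(x,x,0,1)$ and $\varrho(0,1,y,y)$ satisfy a \emph{coupled} pair of integral equations whose solutions are proportional to $h$ with the two distinct constants appearing in Proposition~\ref{prop:mean_kd} (not both equal to $K_1$), but you already flag this coupling and it does not affect the argument.
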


% {\nic 

% \begin{proposition}\label{pro:fixed_point_kd}
% Up to a multiplicative constant, the pair $(\cO^=,\cO^\perp)$ is the unique $\Cfour^+\times \Cfour^+$-valued random field (in distribution) such that $\cO^\perp(a,b,c,d)=\cO^=(c,d,a,b)$ for every $(a,b,c,d)\in I$, with $\Ec{\|\cO^=\|^2}\vee \Ec{\|\cO^\perp\|^2} < \infty$ satisfying the stochastic system of fixed-point equations
% \begin{align} \label{fix:Okd} 
% {\cO^=} \stackrel{d}{=} D^=_1(\cO^{\perp,(1)}) + D^=_2(\cO^{\perp,(2)})
% \qquad \text{and}\qquad
% {\cO^\perp} \stackrel{d}{=} D^\perp_1(\cO^{=,(1)}) + D^\perp_2(\cO^{=,(2)})\,,
% \end{align}
% where $\cO^{\perp,(1)}$ and $\cO^{\perp,(2)}$ are copies of $\cO^\perp$, $\cO^{=,(1)}$ and $\cO^{=,(2)}$ are copies of $\cO^=$, $D_1^=, D_2^=, D_1^\perp, D_2^\perp$ are random linear operators defined in \eqref{eq:opDpara} and \eqref{eq:opDperp}. Furthermore, the random variables $\cO^{\perp,(1)}, \cO^{\perp,(2)}$ and $(D_1^=, D_2^=)$ are independent, and the random variables $\cO^{=,(1)}, \cO^{=,(2)}$, and $(D_1^\perp, D_2^\perp)$ are independent as well.
% \end{proposition}
% }
%\noindent\textbf{Remark.} The statement of Proposition~\ref{pro:fixed_point_kd} is in terms of the pair $(\cO^=,\cO^\perp)$ for clarity, and to emphasize the basic recurrence step. For the proof, one actually groups the levels by two to obtain recurrences that involve $\cO^=$ or $\cO^\perp$ only: in other words, the system is only apparent, and the equations are easily decoupled, which sets us in the same framework as for the case of quadtrees treated in detail before.
\medskip

Proposition~\ref{pro:fixed_point_kd} only characterizes the distribution of $\cO^=$ up to a multiplicative constant. The next proposition identifies the limit mean, and hence the missing multiplicative constant. Here, the values of the constants appearing are reminiscent of the fact 
that, for uniform partial match queries in the trees $T_n^=$ and $T_n^\perp$, \citet*{FlPu1986} proved the analogue of expansion \eqref{limit_unif} and 
\citet*{ChHw2006} identified the leading constants as 
\begin{equation*}%\label{eq:K1_kd}
\kappa^= = \frac{13 (3-5\beta)}{2} \kappa \qquad \text{and} \qquad \kappa^\perp = 13(2\beta -1) \kappa\,.
\end{equation*}
Note that $2 \kappa^= = (\beta + 1) \kappa^\perp$.
% Let $\cB[0,1]$ denote the set of Borel measurable functions on $[0,1]$. Let $M: \cB[0,1]\to \cB[0,1]$ such that, for $f\in \cB[0,1]$, we have, for every $s\in [0,1]$,
% \[M[f](s) = \frac{\beta + 1}{2} \left( \int_s^1 v^\beta f \left(\frac s v \right) dv + \int_0^s 
%(1-v)^\beta f \left(\frac {s-v} {1-v} \right) dv \right) + \frac 1 2 s^{\beta+1}.\] 

\begin{proposition}\label{prop:mean_kd} Let $(a,b,c,d)\in I$. Then, we have 
\begin{align*}
\Ec{\cO^=(a,b,c,d)} & = \frac{13 (3-5\beta)}{4} \Big( \mu(a,d) - \mu(a,c) + \mu(b,d) - \mu(b,c) \Big)\\
& + \frac{13(2 \beta-1)}{2} \Big(\mu(c,b) - \mu(c,a) + \mu(d,b) - \mu(d,a)\Big),
\end{align*}
where $\mu(t,s)$ is the function from Proposition~\ref{prop:mean_main}.
\end{proposition}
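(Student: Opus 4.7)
The plan is to adapt the strategy used to prove Proposition~\ref{prop:mean_main}, replacing each quadtree ingredient by its 2-d tree counterpart. Three ingredients are needed: a decomposition in the spirit of Lemma~\ref{lem:decomp}, convergence of constrained partial-match processes in $T_n^=$, and a mean computation via fixed-point equations.

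First, I would establish the analogue of Lemma~\ref{lem:decomp} for $T_n^=$: the number of visited nodes $O_n^=(a,b,c,d)$ decomposes as the count of points inside the query rectangle (contributing $n\vol$), four constrained one-sided partial-match contributions along the edges $\{x=a\}, \{x=b\}, \{y=c\}, \{y=d\}$, and four corner-anchored fully-specified query costs. The geometric justification is identical to the quadtree case. Defining $Y_n^{=,v}(t,s)$ as the number of nodes with second coordinate at most $s$ visited by a vertical partial match $\{x=t\}$ in $T_n^=$, and $\bar Y_n^=(s,t)$ as its horizontal counterpart, the machinery of Sections~\ref{sec:one_sided}--\ref{sec:constrained} combined with the 2-d tree partial-match results from~\cite{BrNeSu13} gives scaling limits $\mathcal Y^{=,v}$ and $\bar{\mathcal Y}^=$ and hence the representation
\begin{align*}
\cO^=(a,b,c,d) = \tfrac{1}{2}\big(&\mathcal Y^{=,v}(b,d) - \mathcal Y^{=,v}(b,c) + \mathcal Y^{=,v}(a,d) - \mathcal Y^{=,v}(a,c)\\
&+ \bar{\mathcal Y}^=(d,b) - \bar{\mathcal Y}^=(d,a) + \bar{\mathcal Y}^=(c,b) - \bar{\mathcal Y}^=(c,a)\big).
\end{align*}

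The core step is to identify the mean $\nu^{=,v} := \Ec{\mathcal Y^{=,v}}$. Since the subtrees of $T_n^=$ are of type $T^\perp$, the limit $\mathcal Y^{=,v}$ couples with $\mathcal Y^{\perp,v}$, the corresponding limit in $T_n^\perp$, and the mean functions jointly satisfy
\begin{align*}
\nu^{\perp,v}(t,s) &= \int_t^1 u^\beta \nu^{=,v}(t/u, s)\, du + \int_0^t (1-u)^\beta \nu^{=,v}\!\left(\tfrac{t-u}{1-u}, s\right) du,\\
\nu^{=,v}(t,s) &= \int_s^1 v^\beta \nu^{\perp,v}(t, s/v)\, dv + \tfrac{s^{\beta+1}}{\beta+1}\, \nu^{\perp,v}(t,1) + \int_0^s (1-v)^\beta \nu^{\perp,v}\!\left(t, \tfrac{s-v}{1-v}\right) dv.
\end{align*}
Inserting the ansatz $\nu^{=,v}(t,s) = K_1^{=,v} h(t) \tilde g(s)$ and $\nu^{\perp,v}(t,s) = K_1^{\perp,v} h(t) \tilde g_2(s)$, the identity $\int_t^1 u^\beta h(t/u)\, du + \int_0^t (1-u)^\beta h((t-u)/(1-u))\, du = \tfrac{\beta+1}{2} h(t)$ (which follows from taking expectations in \eqref{fix:lim}) factors $h$ out of the first equation, forcing $\tilde g_2 = \tfrac{(\beta+1) K_1^{=,v}}{2 K_1^{\perp,v}} \tilde g$. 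Substituting into the second equation together with the normalization $\tilde g(1) = 1$ (implied by $\nu^{=,v}(t,1) = K_1^{=,v} h(t)$) reveals that $\tilde g$ satisfies precisely equation~\eqref{fix:g}, so by the uniqueness part of Proposition~\ref{prop:meanY}, $\tilde g = g$. The prefactor is pinned down by the partial-match asymptotics of \citet*{ChHw2006}: $K_1^{=,v} = K_1\, \kappa^= / \kappa = \tfrac{13(3-5\beta)}{2}\, K_1$. A symmetric argument, exploiting that a horizontal partial match in $T_n^=$ has the distribution of a vertical partial match in $T_n^\perp$ under coordinate swap, gives $\Ec{\bar{\mathcal Y}^=(s,t)} = 13(2\beta-1) K_1 h(s) g(t)$.

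Substituting these means into the representation of $\cO^=$ and collecting coefficients produces the claimed identity. The main obstacle is the identification $\tilde g = g$: it is initially surprising that the same one-dimensional function governs the second-coordinate profile for both quadtrees and 2-d trees despite their different recursive structures, and the coincidence hinges precisely on the equality between the factor $(\beta+1)/2$ produced by the integral identity for $h$ and the prefactor appearing in \eqref{fix:g}.
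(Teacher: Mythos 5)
The paper states Proposition~\ref{prop:mean_kd} without proof (Section~\ref{sec:kd} only asserts that the arguments are analogous to the quadtree case), so there is nothing to compare against line by line; your architecture --- a $2$-d tree analogue of Lemma~\ref{lem:decomp}, coupled fixed-point equations for the means of the constrained partial-match limits in $T_n^=$ and $T_n^\perp$, and the identification of the second-coordinate profile --- is the natural and, I believe, intended route. Your two recursive equations for $\nu^{=,v}$ and $\nu^{\perp,v}$ are set up correctly, and the verification that $\tilde g$ solves exactly \eqref{fix:g} (hence $\tilde g = g$ by the uniqueness argument of Proposition~\ref{prop:meanY}) is right: the integral identity for $h$ produces the same factor $(\beta+1)/2$ as in the quadtree computation. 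The gap is in the final normalization. Your first equation, evaluated at $s=1$, forces $2K_1^{\perp,v} = (\beta+1)K_1^{=,v}$; equivalently, since a horizontal partial match in $T_n^=$ is a vertical partial match in $T_n^\perp$ after swapping coordinates, the constant governing the horizontal query in $T_n^=$ must equal $\frac{\beta+1}{2}$ times the constant governing the vertical query in $T_n^=$. You then assign $K_1^{=,v} = \frac{13(3-5\beta)}{2}K_1$ to the vertical query and $13(2\beta-1)K_1$ to the horizontal one. These two choices violate the relation you just derived: using $\beta^2 = 2-3\beta$ one checks that $\frac{\beta+1}{2}\cdot 13(2\beta-1) = \frac{13(3-5\beta)}{2}$, so the \emph{smaller} constant $\frac{13(3-5\beta)}{2}K_1$ must belong to the query that enters only one subtree at the root (the horizontal one in $T_n^=$, whose line is parallel to the root's horizontal split), and the \emph{larger} constant $13(2\beta-1)K_1$ to the vertical query, which crosses the split and explores both subtrees. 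Your assignment is the reverse of what your own recursions dictate.

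Carried out consistently, your argument therefore produces the displayed formula with the two prefactors $\frac{13(3-5\beta)}{4}$ and $\frac{13(2\beta-1)}{2}$ interchanged, since the first group of terms comes from the vertical queries at $x=a$ and $x=b$ and the second from the horizontal queries at $y=c$ and $y=d$. Either the labels $\kappa^=$ and $\kappa^\perp$ refer to the orientation of the query line relative to the first splitting line rather than to the tree in which a first-coordinate query is performed --- in which case you have misread the normalization and must swap the two constants you import from \citet{ChHw2006} --- or the statement itself has the coefficients transposed. In either case, the sentence ``the prefactor is pinned down by the partial-match asymptotics of Chern and Hwang'' cannot stand as written: you must first determine which of the two Chern--Hwang constants is the one for the vertical query in $T_n^=$, and the only assignment compatible with the constraint $2K_1^{\perp,v} = (\beta+1)K_1^{=,v}$ coming from your own equations is $K_1^{=,v} = 13(2\beta-1)K_1$ and $K_1^{\perp,v} = \frac{13(3-5\beta)}{2}K_1$. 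As written, the proof does not establish the stated identity.
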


% \subsection{Range queries in relaxed $2$-d trees}

% The relaxed $2$-d trees have been introduced by \citet*{DuEsMa1998} to address the hard constraint imposed by the cyclic partitioning scheme in $2$-d trees. The construction of relaxed $2$-d trees is just as that of $2$-d trees, except that each placeholder is assigned a splitting direction, vertical or horizontal, uniformly at random and independently for every placeholder. See Figure~\ref{fig:relaxed} for an illustration. 

% \begin{figure}[tb]
%     \centering
%     %\includegraphics[width=0.95\textwidth]{}
%     \caption{The first four steps of the construction of a relaxed $2$-d tree, and the corresponding partitions.}
%     \label{fig:relaxed}
% \end{figure}

{\small
\setlength{\bibsep}{.3em}
\bibliographystyle{plainnat}
\bibliography{bib_quadtrees} 
}

\end{document}